\newcommand{\K}{{\mathbb K}}
\newcommand{\C}{{\mathbb{C}}}
\newcommand{\Rn}{{\R^n}}
\newcommand{\N}{{\mathbb{N}}}
\newcommand{\s}{{\mathrm{S}}}
\newcommand{\sn}{{\s^{n-1}}}
\newcommand{\Vol}{{\mathrm{Vol}}}
\newcommand{\sw}{{\mathcal S}}
\newcommand{\RT}{{\mathcal R}}
\newcommand{\SO}{\mathrm{SO}}
\renewcommand{\:}{\, : \,}
\begin{document}
\newcommand{\R}{{\mathbb R}}
\newcommand{\Z}{{\mathbb Z}}
\newcommand{\trace}{\rm trace}
\newcommand{\Ex}{{\mathbb{E}}}
\newcommand{\Prob}{{\mathbb{P}}}
\newcommand{\E}{{\cal E}}
\newcommand{\F}{{\cal F}}
\newtheorem{df}{Definition}
\newtheorem{theorem}{Theorem}
\newtheorem{lemma}{Lemma}
\newtheorem{pr}{Proposition}
\newtheorem{co}{Corollary}
\newtheorem{re}{Remark}
\newcommand{\sign}{\mbox{ sign }}
\newcommand{\A}{{\cal A}}
\newcommand{\X}{{\cal X}}
\newcommand{\diam}{\mbox{\rm dim}}
\newcommand{\vol}{\mbox{\rm Vol}}
\newcommand{\e}{\varepsilon}
%\newcommand{\colon}{{:}\;}
% \newcommand{\begin{proof}}{\noindent {\bf Proof :  \  }}
% \newcommand{\end{proof}}{\begin{flushright}$ \Box $ \end{flushright}}

%%%%%%%%%%%%%%%%%%%%%%%%%%%%%%%%%%%%%%%%%%%%%%%%%%%%%%%%%%%%%%%%%%%%%%%%%%%%%%%%%%%%%%%%%%%%%%%%%%%%%%%%%%%%%%%%%%%%%%%%%%%%%%%%%%%%
\title{Sections of Convex Bodies with Symmetries}
\author{Susanna Dann}
\address{Institute of Discrete Mathematics and Geometry, Vienna University of Technology, Wiedner Hauptstrasse 8-10, 1040 Vienna, Austria.}
\email{susanna.dann@tuwien.ac.at}

\author{Marisa Zymonopoulou}
\address{Department of Mathematics, University of Athens, Panepistimioupolis 15784, Athens, Greece.}
\email{marisa.zym@gmail.com}

%\date{\today}
%\subjclass[2000]
\keywords{Section of convex bodies, Fourier transform, Busemann–Petty problem, Hyperplane inequality, Busemann’s theorem}
%%%%%%%%%%%%%%%%%%%%%%%%%%%%%%%%%%%%%%%%%%%%%%%%%%%%%%%%%%%%%%%%%%%%%%%%%%%%%%%%%%%%%%%%%%%%%%%%%%%%%%%%%%%%%%%%%%%%%%%%%%%%%%%%%
\begin{abstract}
In this paper we study how certain symmetries of convex bodies affect their geometric properties. In particular, we consider the impact of symmetries generated by the block diagonal subgroup of orthogonal transformations, generalizing complex and quaternionic convex bodies. We conduct a systematic study of sections of bodies with symmetries of this type, with the emphasis on problems of the Busemann-Petty type and hyperplane inequalities. The main role belongs to the class of intersection bodies with symmetries. 
\end{abstract}
\maketitle
%%%%%%%%%%%%%%%%%%%%%%%%%%%%%%%%%%%%%%%%%%%%%%%%%%%%%%%%%%%%%%%%%%%%%%%%%%%%%%%%%%%%%%%%%%%%%%%%%%%%%%%%%%%%%%%%%%%%%%%%%%%%%%%%%
\section*{Introduction}

Convex bodies with symmetries, especially bodies of revolution, have been serving as a proving ground in the study of sections for a long time. The convenience of such bodies is based on the explicit formulas for the volume of their sections. Formulas of this type were probably first exploited by H. Hadwiger, who provided a positive answer to the Busemann-Petty problem in $\R^3$ for origin-symmetric axially convex bodies of revolution in \cite{MR0254739}. In connection with the Busemann-Petty problem in $\R^n$, bodies of revolution were used by A. Giannopoulos in \cite{Gi} to supply a negative answer for $n\geq 7$, by M. Papadimitrakis \cite{Pa} to prove a negative answer for $n=5, 6$, by R. Gardner \cite{Ga1} to prove a negative answer for $n \geq 5$, and by G. Zhang \cite{Zh1} to provide further counterexamples. Other results on sections and projections of bodies with symmetries are due to E. Milman \cite{milman2008}, B. Rubin \cite{MR2570666}, B. Rubin and G. Zhang \cite{MR2078635}, D. Ryabogin and A. Zvavitch \cite{MR2102643} and many others.

A convex body in $\C^n$ is a convex body in $\R^{2n}$ that is invariant under the block diagonal subgroup of $\SO(2n)$ of the form $$ \{ \mathrm{diag}(g,\dots, g) \: g\in \SO(2) \} \,, $$
where $\SO(\cdot)$ stands for the special orthogonal group over the reals. Many properties of sections can be improved by imposing the complex structure, in other words by imposing the invariance under the above group, see \cite{MR3106735}.

In this paper we generalize the results from \cite{MR3106735} by studying the impact of the following group of symmetries: The block diagonal subgroup of $\SO(\kappa n)$ of the form
$$ \{ \mathrm{diag}(g,\dots, g) \: g\in \SO(\kappa) \} \,, $$
where $\kappa \in \N$ is fixed. Subsets of $\R^{\kappa n}$ that are invariant under the above group will be called \text{\it $\kappa$-balanced}. To stress the fact that we work with this fixed group of symmetries, the space $\R^{\kappa n}$ along with $\kappa$-balanced geometric objects in this space (such as star shaped bodies, linear subspaces, etc.) will be denoted by $\K^n$. For $\kappa = 1, 2, 4$, $\K^n$ can be thought of as the $n$-dimensional real, complex or quaternionic vector space, respectively; however our results hold in more generality for any $\kappa \in \N$.\footnote{For $\kappa = 1$, $\K^n=\R^n$ and for $\kappa = 2$, $\K^n =\C^n$. For $\kappa = 4$, after some additional restrictions on our group of symmetries, $\K^n$ will become the left or right quaternionic vector space. However, such restrictions are not natural and not necessary when considering geometric problems.}

To study properties of convex bodies with symmetries we introduce the concept of intersection bodies in $\K^n$. Recall that intersection bodies in $\R^n$ were introduced by E. Lutwak in 1988 as part of his dual Brunn-Minkowski theory \cite{Lu}. For a subset $S$ of $\R^n$, define the \text{\it Minkowski functional} of $S$ by
$$ \|x\|_S := \inf \{ \lambda > 0 \: x\in \lambda S\} \, ,$$
with $x\in \R^n$. An origin-symmetric star body $K$ in $\R^n$ is the \text{\it intersection body} \text{\it of an origin}-\text{\it symmetric star body $L$} if the radius of $K$ in every direction equals to the $(n-1)$-dimensional volume of the central hyperplane section of $L$ perpendicular to this direction. In other words, for every unit vector $\xi$ in $\R^n$, 
\begin{equation}\label{eq_ribosb}
\|\xi\|_K^{-1} = |L\cap \xi^{\perp}| \, ,
\end{equation}
where $|\cdot|$ denotes the Euclidean volume. Using polar coordinates, equation (\ref{eq_ribosb}) becomes
$$ \|\xi\|_K^{-1} = \frac{1}{n-1} \int_{\s^n\cap \xi^{\perp}} \|\theta\|_L^{-n+1} d\theta = \frac{1}{n-1} \RT_{n-1} (\| \cdot \|_L^{-n+1}) (\xi) \, , $$
where $\RT_{n-1}$ denotes the spherical Radon transform. Hence, a star body $K$ in $\R^n$ is the intersection body of a star body if and only if $\|\cdot\|_K^{-1}$ is the spherical Radon transform of a continuous positive function on $\s^{n-1}$. 

A more general class of intersection bodies in $\R^n$ was introduced by P. Goodey, E. Lutwak and W. Weil in 1996 in \cite{GoodeyLutwakWeil1996}. A star body $K$ is an \text{\it intersection body} if there exists a finite non-negative Borel measure $\mu$ on the sphere so that $\|\cdot\|_K^{-1}=\RT_{n-1} \mu$. 

The class of intersection bodies in $\R^n$ has been investigated in \cite{borell73, busemann49, Busemann53,	MR1695720, MR1750399,  Ga1,Ga2,GardnerKoldobskySchlumprecht1999, GoodeyLutwakWeil1996, GoodeyWeil1995, hensley80, KaltonKoldobsky05, MR1750442, MR2826412, MR1285999, MR1637955, K2, MR1623670, MR1694767, Koldobsky2000, ludwig2006, Lu, milman2006, milman2008, Zh1, Zh2, Zhang1996}, see also \cite{MR2251886, Koldobsky2005, KoldobskyYaskin2008}. An analogous class of bodies in $\C^n$ was studied in \cite{MR3106735}. Intersection bodies in $\C^n$ were defined along the same lines as intersection bodies in $\R^n$, taking into account the complex structure. They inherit many properties of their real counterparts.

In our discussion we follow ideas from \cite{MR3106735}. We generalize to $\K^n$ many known results from the theory of intersection bodies in $\R^n$ and $\C^n$. We organized this paper as follows. In Section \ref{SectionIntersectionBodiesOfStarBodies} we define intersection bodies of star bodies in $\K^n$. In Section \ref{SectionRadonFourierTransforms} we introduce the spherical Radon transform on $\K^n$ and prove that it coincides with the Fourier transform of distributions on the class of $(-\kappa n + \kappa)$-homogeneous functions on $\R^{\kappa n}$ that are $\kappa$-invariant, see Lemma \ref{lemma_rfts}. This allows to express the volume of sections of star bodies in $\K^n$ in Fourier analytic terms, see Theorem \ref{thm_vos}. Intersection bodies in $\K^n$ are introduced in Section \ref{SectionIntersectionBodies}; here we also prove their Fourier analytic characterization in Theorem \ref{th_kibpdd}. In Section \ref{section_cibinkn} we use the above characterization to show that intersection bodies in $\K^n$ coincide with a generalization of real intersection bodies due to A. Koldobsky: the $\kappa$-balanced $\kappa$-intersection bodies in $\R^{\kappa n}$, see Corollary \ref{co_ibkib}. In Corollary \ref{co_ib} we list all the cases in which an origin-symmetric convex body in $\K^n$ is an intersection body in $\K^n$, these are only the following: 
$$ \text{ (i) } n=2, \kappa\in \N, \text{ (ii) } n=3, \kappa\leq 2 \text{ and (iii) } n=4, \kappa =1.$$
Next, we extend to $\K^n$ a result of P. Goodey and W. Weil that intersection bodies in $\R^n$ can be obtained as the closure in the radial metric of radial sums of ellipsoids, see Theorem \ref{th_ibaloe}. We use this geometric characterization of  intersection bodies in $\K^n$ to prove that they coincide with another generalization of real intersection bodies due to G. Zhang: $\kappa$-balanced generalized $\kappa$-intersection bodies in $\R^{\kappa n}$, see Proposition \ref{pr_ibgkib}. We start Section \ref{section_StabilityInTheBusemannPettyProblemAndHyperplaneInequalities} by solving the Busemann-Petty problem in $\K^n$, see Theorem \ref{th_bp}, and by deriving the hyperplane inequality in $\K^n$: Suppose $K$ is an intersection body in $\K^n$, then
$$ |K|^{\frac{n-1}{n}} \leq  \frac{|B^{\kappa n}_2|^{\frac{n-1}{n}}}{|B^{\kappa n-\kappa}_2|} \max_{\xi\in\s^{\kappa n -1}} |K\cap H_{\xi}| \, ,$$
where $H_{\xi}$ stands for a hyperplane in $\K^n$, see Corollary \ref{co_hibp}. Theorem \ref{th_bppokn} deals with the Busemann-Petty problem in $\K^n$ for arbitrary measures. From the stability consideration in this problem we derive in Theorem \ref{th_hifam} the hyperplane inequality for intersection bodies in $\K^n$ for arbitrary measures. In Lemma \ref{le_hiamdfg} and its corollaries we describe further inequalities obtained from the stability consideration mentioned above; here we take advantage of the fact that we solve the stability question with different density functions for the volume of the body and the volume of sections. Finally, intersection bodies of convex bodies in $\K^n$ are studied in Section \ref{section_IntersectionBodiesOfConvexBodies}; here, in Theorem \ref{th_bt} and Corollary \ref{co_hbt} we extend to $\K^n$ two classical results about intersection bodies of convex bodies in $\R^n$: Busemann's and Hensley-Borell theorems. We introduce the notation and preliminaries throughout the article as needed.

%%%%%%%%%%%%%%%%%%%%%%%%%%%%%%%%%%%%%%%%%%%%%%%%%%%%%%%%%%%%%%%%%%%%%%%%%%%%%%%%%%%%%%%%%%%%%%%%%%%%%%%%%%%%%%%%%%%%%%%%%%%%%%%%%%%%%%%
%%%%%%%%%%%%%%%%%%%%  Section: Intersection Bodies of Star Bodies in $\K^n$  %%%%%%%%%%%%%%%%%%%%%%%%%%%%%%%%%%%%%%%%%%%%%%%%%%% %%%%%%%%%%%%%%%%%%%%%%%%%%%%%%%%%%%%%%%%%%%%%%%%%%%%%%%%%%%%%%%%%%%%%%%%%%%%%%%%%%%%%%%%%%%%%%%%%%%%%%%%%%%%%%%%%%%%%%%%%%%%%%%%%%%%
\section{Intersection Bodies of Star Bodies in $\K^n$}\label{SectionIntersectionBodiesOfStarBodies}

Let $\kappa \in \N$ and $x = (x_1, x_2, \dots, x_{\kappa n}) \in \R^{\kappa n}$. We view $x$ as an ordered set of $n$ ordered $\kappa$-tuples. For every $\sigma \in \SO(\kappa)$ define 
$$ R_{\sigma} (x):=(\sigma(x_1, \dots, x_{\kappa}), \dots, \sigma(x_{\kappa (n-1)+1}, \dots, x_{\kappa n}))$$
to be the vector obtained by rotating the ordered $\kappa$-tuples of $x$. A set $M$ in $\R^{\kappa n}$ is called \text{\it $\kappa$-balanced} if 
$$ \|x\|_{M} = \|R_{\sigma} (x)\|_M = \|\sigma(x_1, \dots, x_{\kappa}), \dots, \sigma(x_{\kappa (n-1)+1}, \dots, x_{\kappa n})\|_{M}$$
for every $x \in \R^{\kappa n}$ and for every $\sigma \in \SO(\kappa)$. We work exclusively with geometric objects in $\R^{\kappa n}$ that are $\kappa$-balanced. For the purpose of clarity and the ease of notation, we denote by $\K^n$ the space $\R^{\kappa n}$ with the additional property that all geometric objects in this space satisfy the above invariance.

We call a set in $\K^n$ a \text{\it convex body} if it is a compact $\kappa$-balanced convex set in $\R^{\kappa n}$ with non-empty interior. Recall that a compact subset $K$ of $\R^n$ containing the origin as an interior point is called a \textit{star body} if every line through the origin crosses the boundary in exactly two points different from the origin. Its \textit{radial function} is defined by
$$ \rho_K(x) := \max \{ a \geq 0 \: ax\in K\} \, ,$$
with $x\in \R^n$. For $x\in\sn$, $\rho_K(x)=\|x\|_K^{-1}$, is the Euclidean distance from the origin to the boundary of $K$ in the direction $x$. The set of $\kappa$-balanced star bodies in $\R^{\kappa n}$ forms the class of \text{\it star bodies} in $\K^n$.

Now we introduce the notion of a hyperplane in $\K^n$. For a vector $y\in \R^{\kappa n}$, denote by $|y|_2$ the Euclidean length of $y$. Denote by $e_i$ the vector in $\R^{\kappa n}$ with the $i$-th coordinate equal to one and all other coordinates equal to zero.

\begin{lemma}\label{lemma_hyperplane}
Fix $\kappa\geq 2$ and a vector $x\in \R^{\kappa n}$. The set $\{R_{\sigma} (x) \: \sigma\in\SO(\kappa) \}$ is a $(\kappa-1)$-dimensional sphere of radius $|x|_2$ and its span, denoted by $H_x^{\perp}$, is a $\kappa$-dimensional subspace of $\R^{\kappa n}$. Moreover, the orthogonal complement of $H_x^{\perp}$, denoted by $H_{x}$, is a $\kappa$-balanced $(\kappa n -\kappa)$-dimensional subspace of $\R^{\kappa n}$.
\end{lemma}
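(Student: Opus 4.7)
The plan is to identify the orbit of $x$ under the diagonal $\SO(\kappa)$-action explicitly and to read off its linear span as a $\kappa$-dimensional subspace; the remaining claims on $H_x$ will then follow from orthogonality.

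Decompose $x \in \R^{\kappa n}$ into its consecutive $\kappa$-tuples $x^{(i)} := (x_{\kappa(i-1)+1}, \dots, x_{\kappa i}) \in \R^{\kappa}$ for $i = 1, \dots, n$, so that $R_{\sigma}(x) = (\sigma x^{(1)}, \dots, \sigma x^{(n)})$. The orthogonality of $\sigma \in \SO(\kappa)$ on each factor $\R^{\kappa}$ gives
$$|R_{\sigma}(x)|_2^2 \,=\, \sum_{i=1}^{n} |\sigma x^{(i)}|_2^2 \,=\, \sum_{i=1}^{n} |x^{(i)}|_2^2 \,=\, |x|_2^2,$$
so the whole orbit lies on the Euclidean sphere of radius $|x|_2$ in $\R^{\kappa n}$.

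Next, write $x^{(i)} = \xi_i v_0$ for a common unit vector $v_0 \in \s^{\kappa-1}$ and scalars $\xi_i \in \R$ (the form a vector takes when it labels a hyperplane in $\K^n$), and introduce the linear map $\Phi : \R^{\kappa} \to \R^{\kappa n}$ defined by $\Phi(v) := (\xi_1 v, \dots, \xi_n v)$. This $\Phi$ is injective, rescales Euclidean norm by the factor $(\sum_i \xi_i^2)^{1/2} = |x|_2$, and satisfies the intertwining relation $R_{\sigma} \circ \Phi = \Phi \circ \sigma$ for every $\sigma \in \SO(\kappa)$. Since $\Phi(v_0) = x$, the orbit equals $\Phi(\SO(\kappa)\,v_0) = \Phi(\s^{\kappa-1})$: a round $(\kappa-1)$-sphere of radius $|x|_2$ sitting inside the $\kappa$-dimensional image $H_x^{\perp} := \Phi(\R^{\kappa})$, whose linear span is all of $H_x^{\perp}$.

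Finally, the intertwining relation $R_{\sigma} \circ \Phi = \Phi \circ \sigma$ shows that $H_x^{\perp}$ is $R_{\sigma}$-invariant, i.e.\ $\kappa$-balanced. Because each $R_{\sigma}$ belongs to $\SO(\kappa n)$ and hence preserves the Euclidean inner product, the orthogonal complement $H_x := (H_x^{\perp})^{\perp}$ is also $R_{\sigma}$-invariant, so $\kappa$-balanced, and of real dimension $\kappa n - \kappa$ by rank--nullity.

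The main obstacle is the identification of the orbit as precisely a $(\kappa-1)$-sphere rather than a higher-dimensional quotient of $\SO(\kappa)$. This hinges on expressing $x$ in the canonical form $x^{(i)} = \xi_i v_0$, after which the stabilizer of $x$ in $\SO(\kappa)$ is exactly the copy of $\SO(\kappa-1)$ fixing $v_0 \in \R^{\kappa}$, producing orbit dimension $\dim \SO(\kappa) - \dim \SO(\kappa-1) = \kappa-1$. Once $\Phi$ and its intertwining property are in place, the dimension and $\kappa$-balancedness statements drop out immediately.
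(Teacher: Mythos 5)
Once you have the decomposition $x^{(i)}=\xi_i v_0$, everything you write is correct, and the intertwiner $\Phi$ is a cleaner, more explicit version of the paper's one-line proof, which simply exhibits the orbit of the particular vector $(0,\dots,0,1)$.

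But the step ``write $x^{(i)}=\xi_i v_0$ for a common unit vector $v_0$'' is an additional hypothesis, not a normalization, and it is exactly where the lemma's content lives. Identifying $x$ with the $\kappa\times n$ matrix whose columns are $x^{(1)},\dots,x^{(n)}$, you are assuming this matrix has rank one; the lemma, and the paper's later use of $H_\xi$, quantify over \emph{all} $\xi\in\s^{\kappa n-1}$, so the parenthetical ``the form a vector takes when it labels a hyperplane'' is not something the paper supplies. For $\kappa\ge 3$ and $n\ge 2$ this actually matters: take $x^{(1)}=v_1$, $x^{(2)}=v_2$ orthonormal in $\R^\kappa$ and $x^{(i)}=0$ for $i\ge 3$. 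The stabilizer of $x$ in $\SO(\kappa)$ is the copy of $\SO(\kappa-2)$ fixing $v_1,v_2$, so the orbit has dimension $\dim\SO(\kappa)-\dim\SO(\kappa-2)=2\kappa-3$, strictly larger than $\kappa-1$, and the smallest $\SO(\kappa)$-invariant subspace of $\R^{\kappa n}\cong\R^\kappa\otimes\R^n$ containing $x$ is a copy of $\R^\kappa\otimes\R^2$, of dimension $2\kappa$ rather than $\kappa$; both the ``$(\kappa-1)$-sphere'' and the ``$\kappa$-dimensional span'' conclusions fail. The paper's ``it is enough to establish the claim for any convenient vector $x$'' hides exactly the same gap: a linear isometry taking a general $x$ to $(0,\dots,0,1)$ and normalizing the group $\{R_\sigma\}$ preserves the matrix rank when $\kappa\ge 3$ (by Schur's lemma the commutant of the diagonal $\SO(\kappa)$-action is $\mathrm{GL}(n,\R)$ acting on the columns), so it exists only when $x$ already has rank one. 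The case $\kappa=2$ is genuinely different and safe: there the commutant contains $U(n)$, which acts transitively on $\s^{2n-1}$, and directly the orbit of any nonzero $x$ is the round circle $\{\cos\theta\,x+\sin\theta\,Jx\}$ with $J$ the standard complex structure. So for $\kappa=2$ your argument (and the paper's) is fine; for $\kappa\ge 3$, $n\ge 2$ the rank-one assumption is an unproved restriction, and both proofs are incomplete as written.
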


\begin{proof}
It is enough to establish the claim for any convenient vector $x$. Choose $x=(0,\dots, 0, 1)$, then $\{R_{\sigma} (x) \: \sigma\in\SO(\kappa) \}$ is the unit sphere of the $\kappa$-dimensional subspace spanned by the vectors $e_{\kappa (n-1)+1}, \dots, e_{\kappa n}$.
\end{proof}

For an element $\xi \in \s^{\kappa n - 1}$, we call $H_{\xi}$ the \text{\it hyperplane in $\K^n$} determined by the vector $\xi$. 

\begin{df}
Let $D$ and $L$ be star bodies in $\K^n$. We call $D$ the intersection body of $L$ in $\K^n$ and denote it by $D=I_{\K}(L)$ if for every $\xi \in \s^{\kappa n - 1}$
\begin{equation}\label{eq_IntersectionBody}
		|D\cap H_{\xi}^{\perp}| = |L\cap H_{\xi}| \, .
\end{equation}
\end{df}

Observe that for a $\kappa$-balanced star body $D$, the set $D\cap H_{\xi}^{\perp}$ is a bounded $\kappa$-dimensional $\kappa$-balanced subset of $\R^{\kappa n}$ and hence it is a $(\kappa-1)$-dimensional ball of radius $\|\xi\|^{-1}_D$. Thus, by the polar formula for the volume, equation (\ref{eq_IntersectionBody}) becomes
\begin{equation}\label{eq_IntersectionBodyPolar}
 \frac{|S^{\kappa-1}|}{\kappa} \|\xi\|^{-\kappa}_{I_{\K}(L)} = |L\cap H_{\xi}| \, ,
\end{equation}
where $|S^{\kappa-1}|$ stands for the surface area of the unit ball in $\R^{\kappa}$. 

%%%%%%%%%%%%%%%%%%%%%%%%%%%%%%%%%%%%%%%%%%%%%%%%%%%%%%%%%%%%%%%%%%%%%%%%%%%%%%%%%%%%%%%%%%%%%%%%%%%%%%%%%%%%%%%%%%%%%%%%%%%%%%%%%%%
%%%%%%%%%%%%%%%%%%%%%%%%  Section: The Radon and Fourier Transforms of $\kappa$-balanced Functions %%%%%%%%%%%%%%%%%%%%%%%%%%%%%%%% %%%%%%%%%%%%%%%%%%%%%%%%%%%%%%%%%%%%%%%%%%%%%%%%%%%%%%%%%%%%%%%%%%%%%%%%%%%%%%%%%%%%%%%%%%%%%%%%%%%%%%%%%%%%%%%%%%%%%%%%%%%%%%%%%%%%
\section{The Radon and Fourier Transforms of $\kappa$-invariant Functions}\label{SectionRadonFourierTransforms}

We call a function $f$ on $\R^{\kappa n}$ \text{\it $\kappa$-invariant} if $f(x)=f(R_{\sigma} x)$ for every $x\in \R^{\kappa n}$ and $\sigma \in \SO(\kappa)$, and denote the space of continuous $\kappa$-invariant real-valued functions on the unit sphere by $C_{\kappa}(\s^{\kappa n - 1})$. 
The \text{\it spherical Radon transform} \text{\it on $\K^n$}, denote it by $\RT^{\kappa}$, is an operator from $C_{\kappa}(\s^{\kappa n - 1})$ to itself, defined by 
$$ \RT^{\kappa}f(\xi) = \int_{\s^{\kappa n - 1} \cap H_{\xi}} f(x) dx \, . $$
% since \H_{R_{\sigma}\xi}} = \H_{\xi}}
The polar formula for the volume yields
\begin{equation}\label{eq_var}
 |L\cap  H_{\xi} | = \frac{1}{\kappa n - \kappa} \RT^{\kappa} (\|\cdot\|_L^{-\kappa n + \kappa})(\xi) \,
\end{equation}
for any star body $L$ in $\K^n$ and $\xi \in \s^{\kappa n - 1}$. Moreover, condition (\ref{eq_IntersectionBody}) becomes
\begin{equation}\label{eq_IntersectionBodyRadon}
		  \|\xi\|^{-\kappa}_{I_{\K}(L)} = \frac{1}{(n -1) |S^{\kappa-1}|} \RT^{\kappa} (\|\cdot\|_L^{-\kappa n + \kappa})(\xi) \, .
\end{equation}
We conclude that a star body $D$ in $\K^n$ is the intersection body of a star body if and only if the function $\|\xi\|^{-\kappa}_D$ is the spherical Radon transform on $\K^n$ of a positive $\kappa$-invariant continuous function on $\s^{\kappa n - 1}$.

We will generalize several classical facts, connecting the Radon and Fourier transforms. We start by recalling the relevant concepts and facts in $\R^n$.

One of the main tools used in this paper is the Fourier transform of distributions, see \cite{GelfandShilov1964} for details. Denote by $\sw(\Rn)$ the \textit{Schwartz space} of rapidly decreasing infinitely differentiable functions on $\Rn$, also referred to as \textit{test functions}, and by $\sw'(\Rn)$ the space of \textit{distributions} on $\Rn$. The Fourier transform $\hat{f}$ of a distribution $f$ is defined by $\langle \hat{f}, \varphi \rangle = \left\langle f, \hat{\varphi} \right\rangle$ for every test function $\varphi$. For an even test function $\varphi$, the Fourier transform is self-invertible up to a constant factor: $(\varphi^{\wedge})^{\wedge}=(2\pi)^n\varphi$. A distribution $f$ on $\Rn$ is \textit{even homogeneous of degree $p\in\R$}, if 
$$ \left\langle f(x), \varphi\left(\frac{x}{\alpha}\right) \right\rangle = |\alpha|^{n+p} \left\langle f, \varphi  \right\rangle $$
for every test function $\varphi$ and every $\alpha\in\R, \alpha\neq 0$. The Fourier transform of an even homogeneous distribution of degree $p$ is an even homogeneous distribution of degree $-n-p$. We call a distribution $f$ \textit{positive definite} if its Fourier transform is a positive distribution, i.e. $\langle \hat{f}, \varphi \rangle \geq 0$ for every non-negative test function $\varphi$. A measure $\mu$ is \textit{tempered} if for some $\beta > 0$
$$ \int_{\R^n} (1+|x|_2)^{-\beta} d\mu(x) < \infty \, ,$$
where $|\cdot|_2$ denotes the Euclidean norm on $\R^n$. A distribution is positive definite if and only if it is the Fourier transform of a tempered measure on $\R^n$, see \cite{GelfandVilenkin64}, p.152. Let $D$ be an origin-symmetric star body in $\R^n$. For $0<p<n$, the function $\|\cdot\|_D^{-p}$ is locally integrable on $\R^n$, and represents an even homogeneous distribution of degree $-p$, see \cite{Koldobsky2005}, Lemma 2.1. If $\|\cdot\|_D^{-p}$ is also positive definite, then its Fourier transform is a homogeneous distribution of degree $-n+p$. Also, there exists a measure $\mu$ on the sphere so that
\begin{equation}\label{eq_posdefmsre}
\int_{\R^n} \|x\|_D^{-p} \varphi(x) dx = \int_{\s^{n-1}} \left( \int_0^{\infty} t^{p-1} \hat{\varphi}(t\xi) dt \right) d\mu(\xi) \, , 
\end{equation}
for every test function $\varphi$, see \cite{Koldobsky2005}, Corollary 2.26 (i).

Let $f$ be an even continuous function on $\s^{n-1}$ and let $p$ be a non-zero real number. We extend $f$ to an even homogeneous function on $\R^n$ of degree $p$ in the usual way as follows. Let $x\in\R^n$, then $x=r \theta$ with $r=|x|_2$ and $\theta = x/|x|_2$. We write
$$ f\cdot r^p(x)=f(\theta) r^p \, .$$  
It was shown in \cite{Koldobsky2005}, Lemma 3.16, that for an infinitely-smooth function $f$ on $\s^{n-1}$ and $-n<p<0$, the Fourier transform of $f \cdot r^{-p}$ is an infinitely-smooth function on $\Rn \setminus \{0\}$, homogeneous of degree $-n+p$. 

We shall often use Parseval's formula on the sphere:
\begin{lemma}\textnormal{(\cite{Koldobsky2005}, Lemma 3.22)}\label{lemma_ParsevalOnTheSphere}
Let $f$ and $g$ be even infinitely-smooth functions on $\s^{n-1}$ and let $0<p<n$. Then 
$$ \int_{\sn} (f\cdot r^{-p})^{\wedge}(\theta) (g\cdot r^{-n+p})^{\wedge}(\theta) d\theta = (2\pi)^n \int_{\sn} f(\theta) g(\theta) d\theta \, . $$
\end{lemma}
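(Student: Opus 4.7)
The plan is to reduce the identity, via a spherical harmonic decomposition, to an elementary identity between Gamma-function constants. Fix an orthonormal basis $\{Y_{k,j}\}$ of $L^2(\s^{n-1})$ consisting of spherical harmonics, with $k\geq 0$ the degree and $j$ indexing an orthonormal basis of the degree-$k$ subspace. Expand
$$
 f \,=\, \sum_{k,j} a_{k,j} Y_{k,j}, \qquad g \,=\, \sum_{k,j} b_{k,j} Y_{k,j};
$$
evenness of $f$ and $g$ forces $k$ to be even. Both sides of the asserted identity are continuous bilinear forms on $C^{\infty}(\s^{n-1})\times C^{\infty}(\s^{n-1})$---the left-hand side because the operator $h\mapsto (h\cdot r^{-p})^{\wedge}|_{\s^{n-1}}$ is continuous in the Fr\'echet topology, a consequence of Lemma 3.16 of \cite{Koldobsky2005} quoted in the paragraph preceding the statement---so it suffices to verify the identity on the monomials $f=Y_{k,j}$, $g=Y_{l,i}$ and then extend by continuity.

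The main ingredient is the classical formula for the Fourier transform of a spherical harmonic times a Riesz kernel:
$$
 (Y_{k,j}\cdot r^{-p})^{\wedge}(\xi) \;=\; i^{-k}\, c(n,p,k)\, Y_{k,j}\!\left(\tfrac{\xi}{|\xi|_2}\right) |\xi|_2^{-n+p},
 \qquad
 c(n,p,k) := 2^{n-p}\pi^{n/2}\, \frac{\Gamma\!\big(\tfrac{n-p+k}{2}\big)}{\Gamma\!\big(\tfrac{p+k}{2}\big)}.
$$
For $0<p<n$ this identity is obtained by analytic continuation in $p$ from the strip in which $Y_{k,j}\cdot r^{-p}$ is absolutely integrable near the origin, reducing via the Funk--Hecke formula to a standard Hankel-transform identity (equivalently, to the Bochner relation between the Fourier transform on $\R^{n}$ and a $(n+2k)$-dimensional radial transform).

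Restricting both Fourier transforms to the unit sphere and applying orthogonality of the $\{Y_{k,j}\}$, the left-hand side of the lemma collapses to a diagonal sum: the only non-vanishing terms are those with $(k,j)=(l,i)$, and the factor $i^{-2k}=(-1)^{k}$ equals $1$ on the even indices that contribute. The whole identity therefore reduces to the single constant relation
$$
 c(n,p,k)\cdot c(n,n-p,k) \;=\; (2\pi)^{n},
$$
which is immediate from the explicit form of $c(n,p,k)$: the Gamma quotients are mutual reciprocals, and $2^{n-p}\cdot 2^{p}\cdot \pi^{n} = (2\pi)^{n}$. The principal obstacle in this plan is establishing the distributional Fourier-transform formula for $c(n,p,k)$ on the full range $0<p<n$; this is handled by analytic continuation in $p$, both sides being meromorphic, with the Gamma quotient encoding the poles of the Riesz kernel and no cancellation occurring in the strip of interest.
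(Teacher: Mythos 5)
This lemma is cited in the paper from Koldobsky's book and is not proved there, so there is no internal proof to compare against; I evaluate your argument on its own merits, and it is correct. Your route via spherical harmonics is in fact the standard textbook argument (and essentially the one used in the cited reference): the Bochner-type formula
$$
(Y_{k,j}\cdot r^{-p})^{\wedge}(\xi)=i^{-k}\,2^{n-p}\pi^{n/2}\,\frac{\Gamma\bigl(\tfrac{n-p+k}{2}\bigr)}{\Gamma\bigl(\tfrac{p+k}{2}\bigr)}\,Y_{k,j}\bigl(\tfrac{\xi}{|\xi|_2}\bigr)|\xi|_2^{-n+p}
$$
is correctly normalized for the convention $(\hat\varphi)^{\wedge}=(2\pi)^n\varphi$ used throughout the paper; the Gamma quotients for $p$ and $n-p$ are indeed mutual reciprocals so that $c(n,p,k)\,c(n,n-p,k)=2^{n}\pi^{n}=(2\pi)^n$; and the phase $i^{-k}\cdot i^{-k}=(-1)^{k}=1$ because evenness of $f,g$ restricts to even $k$. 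Restricting both transforms to the sphere, invoking $L^2$-orthonormality of the (real) $Y_{k,j}$, and passing from monomials to general smooth $f,g$ by the rapid decay of the harmonic coefficients (equivalently, continuity of both bilinear forms, using the smoothness of $(h\cdot r^{-p})^{\wedge}$ off the origin from Lemma 3.16 of \cite{Koldobsky2005}) gives exactly the stated identity. Two small points worth making explicit in a polished write-up: first, one should note that the $Y_{k,j}$ may be taken real so that the $L^2$ pairing requires no conjugation; second, the analytic continuation in $p$ extending the Bochner formula from the strip of absolute integrability to all of $0<p<n$ should cite the meromorphy of $p\mapsto Y_{k,j}\cdot r^{-p}$ as a distribution (no poles occur in $(0,n)$ on the even-harmonic component, matching the non-vanishing of the Gamma quotient), which you indicate but do not carry out. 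These are expository, not substantive; the proof is sound.
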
 

\noindent
Another basic fact from Fourier analysis is the following.
\begin{lemma}\textnormal{(\cite{Koldobsky2005}, Lemma 3.24)}\label{lemma_ftf}
Let $0<k<n$, and let $\varphi\in \sw(\R^n)$ be an even test function. Then for any $(n-k)$-dimensional subspace $H$ of $\R^n$
$$ (2\pi)^k \int_{H} \varphi(x) dx = \int_{H^{\perp}} \hat{\varphi}(x) dx \, . $$
\end{lemma}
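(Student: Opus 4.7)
The plan is to reduce to standard coordinates and then recognize the right-hand side as the integral of a partial Fourier transform, which can be evaluated by Fourier inversion in the complementary variables.

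First I would exploit rotational covariance of Lebesgue measure and of the Fourier transform on $\R^n$ to reduce to the case $H = \R^{n-k}\times\{0\}$ and $H^{\perp} = \{0\}\times\R^{k}$. Writing $x=(x',x'')$ and $y=(y',y'')$ with $x',y'\in\R^{n-k}$ and $x'',y''\in\R^{k}$, the claim becomes
$$(2\pi)^k \int_{\R^{n-k}} \varphi(x',0)\,dx' = \int_{\R^{k}} \hat{\varphi}(0,y'')\,dy''.$$

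Next I would expand $\hat{\varphi}(0,y'')=\int_{\R^{n}}\varphi(x',x'')\,e^{-i\langle x'',y''\rangle}dx'\,dx''$, apply Fubini (which is justified since $\varphi\in\sw(\R^n)$ makes all integrands absolutely integrable), and swap the $dy''$ integration with the $dx'$ integration. This leaves the inner object
$$\int_{\R^{k}}\Big(\int_{\R^{k}}\varphi(x',x'')\,e^{-i\langle x'',y''\rangle}\,dx''\Big)\,dy'',$$
which is exactly $\int_{\R^k}(\mathcal{F}_{x''}\varphi)(x',y'')\,dy''$, the integral of the partial Fourier transform in the $x''$-variable. By the one-dimensional (here $k$-dimensional) Fourier inversion formula applied in the second slot,
$$\varphi(x',0) = \frac{1}{(2\pi)^k}\int_{\R^{k}}(\mathcal{F}_{x''}\varphi)(x',y'')\,dy'',$$
so the inner quantity equals $(2\pi)^k\varphi(x',0)$. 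Integrating over $x'\in\R^{n-k}$ yields the identity.

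The only point requiring care is the use of Fubini and of Fourier inversion at the specific point $x''=0$; both are immediate because $\varphi$ is Schwartz and hence so are all its partial Fourier transforms, making pointwise inversion legitimate. There is no serious obstacle: the statement is essentially a coordinate unwinding of the Plancherel-type identity that pairs integration over a subspace with integration over its orthogonal complement via the Fourier transform.
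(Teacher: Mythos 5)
This lemma is quoted from Koldobsky's book (Lemma 3.24) and the paper does not reproduce a proof, so there is nothing internal to compare against; your argument is, however, correct and is the standard one. One point deserves sharper phrasing: the full triple integral $\int_{\R^k}\int_{\R^{n-k}}\int_{\R^k}\varphi(x',x'')e^{-i\langle x'',y''\rangle}\,dx''\,dx'\,dy''$ is \emph{not} absolutely convergent (the modulus of the integrand is $|\varphi(x',x'')|$, independent of $y''$, so the $dy''$ integral diverges), so "Fubini because $\varphi$ is Schwartz makes all integrands absolutely integrable" is misleading as stated. What actually justifies the interchange of $dy''$ and $dx'$ is that, after performing the inner $dx''$ integration, the resulting function $(x',y'')\mapsto(\mathcal{F}_{x''}\varphi)(x',y'')$ is itself Schwartz on $\R^{n-k}\times\R^{k}$, hence absolutely integrable there; your closing remark about partial Fourier transforms being Schwartz is the right point, and it should be invoked at the Fubini step rather than an appeal to absolute integrability of the original triple integrand. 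The Fourier inversion at $x''=0$, giving $\varphi(x',0)=(2\pi)^{-k}\int_{\R^k}(\mathcal{F}_{x''}\varphi)(x',y'')\,dy''$, and the reduction by rotation to the coordinate splitting $\R^{n-k}\times\R^{k}$ are both fine. The evenness hypothesis is not actually used in your argument (nor is it needed for this identity); it appears in the statement only for consistency with the surrounding framework.
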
 

The spherical version of the above lemma allows to express the volume of lower-dimensional sections of an origin-symmetric star body in $\R^n$ in Fourier analytic terms.   
\begin{lemma}\textnormal{(\cite{Koldobsky2005}, Lemma 3.25)}\label{lemma_ftlds}
Let $0<k<n$, and let $\varphi$ be an even infinitely-smooth function on $\s^{n-1}$. Then for any $(n-k)$-dimensional subspace $H$ of $\R^n$
$$ (2\pi)^k \int_{\s^{n-1} \cap H} \varphi(x) dx = \int_{\s^{n-1} \cap H^{\perp}} (\varphi \cdot r^{-n+k})^{\wedge}(x) dx \, . $$
\end{lemma}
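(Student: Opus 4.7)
The plan is to reduce the identity to the Schwartz-class version in Lemma \ref{lemma_ftf} by radially regularizing the homogeneous distribution $\Phi := \varphi \cdot r^{-n+k}$. Choose an even $\psi \in \sw(\R)$ vanishing to infinite order at the origin (so, for instance, $\psi(r) = e^{-1/r^2}e^{-r^2}$) and with
$$I(\psi) := \int_0^\infty \psi(r)\,r^{n-k-1}\,dr \neq 0.$$
Then $\Phi_\psi(x) := \varphi\!\left(\tfrac{x}{|x|_2}\right)\psi(|x|_2)$ is even and Schwartz on $\R^n$, so Lemma \ref{lemma_ftf} applies to it and yields $(2\pi)^k\int_H\Phi_\psi = \int_{H^\perp}\hat\Phi_\psi$.

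I would evaluate both sides in polar coordinates. On the $(n-k)$-dimensional subspace $H$ the left-hand side factorizes immediately as
$$(2\pi)^k \int_H \Phi_\psi = (2\pi)^k\,I(\psi)\int_{\sn \cap H}\varphi(\theta)\,d\theta.$$
For the right-hand side I would write $\int_{H^\perp}\hat\Phi_\psi$ as $\int_{\sn\cap H^\perp}\bigl(\int_0^\infty s^{k-1}\hat\Phi_\psi(s\eta)\,ds\bigr)d\eta$, expand $\hat\Phi_\psi(s\eta)$ via the definition of the Fourier transform, apply Fubini, and make the substitution $u = sr$ in the inner radial variable. This substitution decouples the $\psi$-dependent piece, which factors off as the same constant $I(\psi)$; what remains is
$$\int_{\sn}\varphi(\theta)\!\left(\int_0^\infty u^{k-1}e^{-iu\,\eta\cdot\theta}\,du\right) d\theta,$$
which, after symmetrization in $\theta\mapsto -\theta$ via the evenness of $\varphi$, coincides with the standard distributional representation of $\hat\Phi(\eta)$ as a cosine-type transform of $\varphi$ on the sphere. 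Substituting back into Lemma \ref{lemma_ftf} and cancelling the nonzero scalar $I(\psi)$ delivers the claim.

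The main obstacle is the rigorous identification $\int_0^\infty s^{k-1}\hat\Phi_\psi(s\eta)\,ds = I(\psi)\,\hat\Phi(\eta)$: the Fubini step produces the formally divergent integral $\int_0^\infty u^{k-1}e^{-iu\alpha}\,du$, which must be interpreted as a tempered distribution in $\alpha$. The cleanest route is to verify the identity first for $0<k<1$, where symmetrizing in $\theta$ converts the expression into an absolutely convergent cosine integral equal to an explicit multiple of $|\eta\cdot\theta|^{-k}$ — this matches the classical Riesz-type formula for the Fourier transform of a homogeneous distribution of degree $-n+k$ — and then to extend to the full range $0<k<n$ by analytic continuation in the homogeneity parameter $k$, using that $\hat\Phi$ depends holomorphically on $k$ throughout the strip. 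Smoothness of $\hat\Phi$ away from the origin, supplied by Lemma 3.16 of \cite{Koldobsky2005}, guarantees that the pointwise values $\hat\Phi(\eta)$ appearing in the final formula are meaningful.
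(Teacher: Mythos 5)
The paper does not give a proof of this statement: Lemma \ref{lemma_ftlds} is quoted verbatim from \cite{Koldobsky2005} (Lemma 3.25) and used as a black box, so there is no in-paper argument to compare against.

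Judged on its own merits, your regularize-and-reduce plan (factor a radial Schwartz cut-off into Lemma \ref{lemma_ftf}, then peel it off) is the right architecture, but there is a genuine gap in the step you yourself flag. Your Fubini manipulation arrives at the inner integral $\int_0^\infty u^{k-1}e^{-iu\alpha}\,du$, and after symmetrization at $\int_0^\infty u^{k-1}\cos(u\alpha)\,du$; these converge only for $0<k<1$. For $k\geq 1$ you retreat to ``analytic continuation in $k$,'' but you do not execute it, and it is not a triviality: one has to justify that the \emph{pointwise} value $\hat\Phi(\eta)=(\varphi\cdot r^{-n+k})^\wedge(\eta)$ (not merely $\hat\Phi$ as a tempered distribution) is holomorphic in the homogeneity parameter throughout $0<\mathrm{Re}\,k<n$, that the normalizing scalar $I(\psi)=\int_0^\infty\psi(r)\,r^{n-k-1}\,dr$ (which itself depends on $k$) is entire, and that the two sides agree with the correct constant on $0<k<1$. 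As written these are asserted rather than proved, so the central identity
$$ \int_0^\infty \hat\Phi_\psi(s\eta)\,s^{k-1}\,ds = I(\psi)\,\hat\Phi(\eta), \qquad \eta\in\s^{n-1}, $$
is not actually established.

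There is a cleaner route to this identity that stays inside the tools already cited in the paper and avoids the divergent integral entirely. Pair the left-hand side against an arbitrary even infinitely-smooth $v$ on $\s^{n-1}$: since $\hat\Phi_\psi$ is Schwartz,
$$ \int_{\s^{n-1}} v(\eta)\left(\int_0^\infty \hat\Phi_\psi(s\eta)\,s^{k-1}\,ds\right)d\eta = \left\langle v\cdot r^{-n+k},\,\hat\Phi_\psi\right\rangle = \left\langle (v\cdot r^{-n+k})^\wedge,\,\Phi_\psi\right\rangle. $$
Because $(v\cdot r^{-n+k})^\wedge$ is continuous and homogeneous of degree $-k$ (Lemma 3.16 of \cite{Koldobsky2005}) and $\Phi_\psi$ factors radially, the last pairing equals $I(\psi)\int_{\s^{n-1}}(v\cdot r^{-n+k})^\wedge\,\varphi$. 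It remains to invoke the self-adjointness
$$ \int_{\s^{n-1}} (v\cdot r^{-n+k})^\wedge(\theta)\,\varphi(\theta)\,d\theta = \int_{\s^{n-1}} v(\eta)\,(\varphi\cdot r^{-n+k})^\wedge(\eta)\,d\eta, $$
which follows from Lemma \ref{lemma_ParsevalOnTheSphere} together with the inversion formula: write $(\varphi\cdot r^{-n+k})^\wedge = (2\pi)^n\,u\cdot r^{-k}$ with $u$ smooth on $\s^{n-1}$, so $(u\cdot r^{-k})^\wedge=\varphi\cdot r^{-n+k}$, and apply Lemma \ref{lemma_ParsevalOnTheSphere} with $f=v$, $g=u$, $p=n-k$. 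This yields $\int_{\s^{n-1}}v\cdot F = I(\psi)\int_{\s^{n-1}}v\cdot\hat\Phi$ for all such $v$, hence the pointwise identity, uniformly for all $0<k<n$. Plugging it back into Lemma \ref{lemma_ftf} applied to $\Phi_\psi$ and dividing by $I(\psi)\neq 0$ completes the argument exactly as you outline. I would recommend replacing the Fubini/analytic-continuation sketch with this duality step.
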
 

The $\kappa$-invariance of a function translates into a certain invariance of its Fourier transform.

\begin{lemma}\label{lemma_ftc}
Suppose that $f$ is an even infinitely-smooth $\kappa$-invariant function on $\s^{\kappa n -1}$. Then for any $0<p<\kappa n$ and any $\xi \in \s^{\kappa n-1}$ the Fourier transform of the distribution $f\cdot r^{-p}$ is a constant function on $\s^{\kappa n-1}\cap H_{\xi}^{\perp}$.
\end{lemma}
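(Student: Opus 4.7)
The plan is to transfer the $\kappa$-invariance of the homogeneous extension $f\cdot r^{-p}$ to its Fourier transform via the orthogonal equivariance of the Fourier transform, and then to invoke Lemma \ref{lemma_hyperplane} to identify $\s^{\kappa n-1}\cap H_\xi^\perp$ with the orbit of $\xi$ under the group $\{R_\sigma:\sigma\in\SO(\kappa)\}$.

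First I would extend $f$ to $\R^{\kappa n}\setminus\{0\}$ as the even homogeneous function $f\cdot r^{-p}$ of degree $-p$. Since $|x|_2$ is preserved by every orthogonal transformation of $\R^{\kappa n}$, in particular by each $R_\sigma$ (which lies in $\SO(\kappa n)$), and since $f$ is $\kappa$-invariant on $\s^{\kappa n-1}$ by hypothesis, this extension is $\kappa$-invariant on $\R^{\kappa n}\setminus\{0\}$. The standard equivariance $\widehat{g\circ R}=\hat g\circ R$ for orthogonal $R$ (which is a change-of-variables computation for test functions, extended to distributions by duality) then implies that $(f\cdot r^{-p})^\wedge$ is also $\kappa$-invariant. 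By the smoothness result of Koldobsky quoted before Lemma \ref{lemma_ParsevalOnTheSphere}, this Fourier transform is actually an infinitely smooth function on $\R^{\kappa n}\setminus\{0\}$, so the invariance holds pointwise.

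To conclude, fix $\xi\in\s^{\kappa n-1}$. By Lemma \ref{lemma_hyperplane}, the orbit $\{R_\sigma\xi:\sigma\in\SO(\kappa)\}$ is a compact connected $(\kappa-1)$-dimensional sphere of radius $1$ contained in the $\kappa$-dimensional subspace $H_\xi^\perp$, hence contained in the unit sphere $\s^{\kappa n-1}\cap H_\xi^\perp$ of that subspace. Both the orbit and this unit sphere are connected $(\kappa-1)$-dimensional manifolds, and the first sits as a compact submanifold of the second, so they must coincide. Combined with the $\kappa$-invariance of $(f\cdot r^{-p})^\wedge$ established above, this yields the required constancy on $\s^{\kappa n-1}\cap H_\xi^\perp$.

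The one step demanding real attention is the identification of the orbit with the entire unit sphere of $H_\xi^\perp$; granted Lemma \ref{lemma_hyperplane}, this reduces to a short dimension-and-connectedness argument (the inclusion is a local diffeomorphism from a compact space into a connected space of the same dimension, hence surjective). The rest of the proof is simply the standard rotation-equivariance of the Fourier transform applied to the invariance subgroup $\{R_\sigma\}\subset\SO(\kappa n)$.
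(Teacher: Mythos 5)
Your proposal is correct and follows essentially the same route as the paper: transfer the $\kappa$-invariance through the orthogonal equivariance of the Fourier transform, observe (via Koldobsky's smoothness result) that the transform is an honest function so that invariance holds pointwise, and invoke Lemma~\ref{lemma_hyperplane} to identify $\s^{\kappa n-1}\cap H_\xi^\perp$ with the $\SO(\kappa)$-orbit of $\xi$. The only cosmetic difference is that your dimension-and-connectedness argument for the orbit being the full sphere is redundant, since Lemma~\ref{lemma_hyperplane} already asserts directly that the orbit of a unit vector is the $(\kappa-1)$-sphere of radius one in $H_\xi^\perp$, i.e.\ precisely $\s^{\kappa n-1}\cap H_\xi^\perp$.
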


\begin{proof}
The Fourier transform of $f\cdot r^{-p}$ is a continuous function outside of the origin in $\R^{\kappa n}$ by Lemma 3.16 in \cite{Koldobsky2005}. Since the function $f$ is $\kappa$-invariant, by the connection between the Fourier transform of distributions and linear transformations, the Fourier transform of $f\cdot r^{-p}$ is also $\kappa$-invariant. The set $\s^{\kappa n-1}\cap H_{\xi}^{\perp}$ is the unit sphere of the $\kappa$-balanced $\kappa$-dimensional subspace $H_{\xi}^{\perp}$, see Lemma \ref{lemma_hyperplane}. Hence every vector in $\s^{\kappa n-1}\cap H_{\xi}^{\perp}$ is the image of $\xi$ under one of the $\kappa$-tuple-wise rotations in $\SO(\kappa n)$ and consequently the Fourier transform of $f\cdot r^{-p}$ is a constant function on $\s^{\kappa n-1}\cap H_{\xi}^{\perp}$.
\end{proof}

\begin{lemma}\label{lemma_rft}
Let $\varphi$ be an even infinitely-smooth $\kappa$-invariant function on $\s^{\kappa n - 1}$, then for $\xi \in \s^{\kappa n - 1}$
$$ \RT^{\kappa} \varphi (\xi) = \frac{|S^{\kappa-1}|}{(2 \pi)^{\kappa}} (\varphi \cdot r^{-\kappa n+\kappa})^{\wedge}(\xi)\, . $$
\end{lemma}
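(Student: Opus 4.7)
The plan is to apply Lemma \ref{lemma_ftlds} with $k=\kappa$ to the subspace $H=H_{\xi}$, which by Lemma \ref{lemma_hyperplane} is a $(\kappa n-\kappa)$-dimensional subspace of $\R^{\kappa n}$. This immediately yields
$$ (2\pi)^{\kappa} \int_{\s^{\kappa n-1} \cap H_{\xi}} \varphi(x)\, dx \;=\; \int_{\s^{\kappa n-1} \cap H_{\xi}^{\perp}} (\varphi \cdot r^{-\kappa n+\kappa})^{\wedge}(x)\, dx, $$
whose left-hand side is exactly $(2\pi)^{\kappa}\,\RT^{\kappa}\varphi(\xi)$ by the definition of the spherical Radon transform on $\K^n$.

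For the right-hand side I would use the $\kappa$-invariance of $\varphi$ together with Lemma \ref{lemma_ftc}, taking $p = \kappa n - \kappa$ (which lies strictly between $0$ and $\kappa n$ as required). The lemma guarantees that $(\varphi \cdot r^{-\kappa n+\kappa})^{\wedge}$ is constant on $\s^{\kappa n-1}\cap H_{\xi}^{\perp}$, so it can be pulled out of the integral and equals its value at $\xi$. The remaining integration reduces to computing the surface area of $\s^{\kappa n-1}\cap H_{\xi}^{\perp}$, which by Lemma \ref{lemma_hyperplane} is the unit sphere of a $\kappa$-dimensional subspace of $\R^{\kappa n}$ and hence has total measure $|S^{\kappa-1}|$.

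Combining these two observations gives
$$ (2\pi)^{\kappa}\,\RT^{\kappa}\varphi(\xi) \;=\; |S^{\kappa-1}|\, (\varphi \cdot r^{-\kappa n+\kappa})^{\wedge}(\xi), $$
and dividing by $(2\pi)^{\kappa}$ produces the asserted identity.

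There is no real obstacle here; the argument is a short bookkeeping step that ties together the two preceding lemmas. The only thing to be careful about is verifying the hypotheses of Lemma \ref{lemma_ftlds} and Lemma \ref{lemma_ftc}: the function $\varphi$ is even and infinitely smooth (given), $\kappa$-invariant (given), and the exponent $\kappa n-\kappa$ satisfies $0 < \kappa n - \kappa < \kappa n$ provided $n\geq 2$ (the case $n=1$ being degenerate, as $H_{\xi} = \{0\}$ and both sides vanish appropriately).
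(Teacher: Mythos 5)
Your proposal is correct and follows exactly the same route as the paper's proof: apply Lemma \ref{lemma_ftlds} with $k=\kappa$ and $H=H_{\xi}$ to rewrite $\RT^{\kappa}\varphi(\xi)$ as a Fourier-transform integral over $\s^{\kappa n-1}\cap H_{\xi}^{\perp}$, then invoke Lemma \ref{lemma_ftc} to pull the constant integrand out and replace the remaining integral by $|S^{\kappa-1}|$. Your extra remarks about checking the hypotheses are a useful sanity check but do not change the argument.
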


\begin{proof}
By Lemma \ref{lemma_ftlds}, we have
$$ \RT^{\kappa} \varphi (\xi) = \int_{\s^{\kappa n - 1} \cap H_{\xi}} \varphi(x) dx = \frac{1}{(2 \pi)^{\kappa}} \int_{\s^{\kappa n - 1} \cap H_{\xi}^{\perp}} (\varphi \cdot r^{-\kappa n+\kappa})^{\wedge}(x) dx \, .$$
Since $\varphi$ is $\kappa$-invariant, by Lemma \ref{lemma_ftc} the integrand on the right-hand side is a constant function on $\s^{\kappa n - 1} \cap H_{\xi}^{\perp}$, which itself is a $(\kappa-1)$-dimensional Euclidean unit sphere. Hence
$$ \int_{\s^{\kappa n - 1} \cap H_{\xi}^{\perp}} (\varphi \cdot r^{-\kappa n+\kappa})^{\wedge}(x) dx = |S^{\kappa-1}| (\varphi \cdot r^{-\kappa n+\kappa})^{\wedge}(\xi)\, .$$
\end{proof}

The smoothness assumption in the above lemma can be removed. It is an analog of Lemma 3.7 in \cite{Koldobsky2005}, see also Lemma 4 in \cite{MR3106735}. Beforehand we need the following fact.

\begin{lemma}\label{lemma_rtsd}
The spherical Radon transform on $\K^n$ is self-dual, i.e. for any even continuous $\kappa$-invariant functions $f, g$ on $\s^{\kappa n - 1}$
$$ \int_{\s^{\kappa n - 1}} \RT^{\kappa} f(\xi) g(\xi) d\xi = \int_{\s^{\kappa n - 1}} f(\theta) \RT^{\kappa} g(\theta) d\theta \,. $$
\end{lemma}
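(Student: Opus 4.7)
The plan is to unfold both sides as double integrals over the incidence manifold $\Omega = \{(\xi, x) \in \s^{\kappa n - 1} \times \s^{\kappa n - 1} : x \in H_\xi\}$ and then to exploit the symmetry of the incidence relation via Fubini's theorem. The first step is to verify that $\Omega$ is invariant under the swap $(\xi, x) \leftrightarrow (x, \xi)$. By definition $x \in H_\xi$ iff $\langle x, R_\sigma \xi \rangle = 0$ for every $\sigma \in \SO(\kappa)$. Since each $R_\sigma$ is orthogonal with inverse $R_{\sigma^{-1}}$ and $\sigma \mapsto \sigma^{-1}$ is a bijection of $\SO(\kappa)$, this is equivalent to $\langle \xi, R_\tau x \rangle = 0$ for every $\tau \in \SO(\kappa)$, namely $\xi \in H_x$.

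Unfolding the left-hand side, Fubini then gives
$$ \int_{\s^{\kappa n - 1}} \RT^\kappa f(\xi)\, g(\xi)\, d\xi \;=\; \int_{\s^{\kappa n - 1}} \int_{\s^{\kappa n - 1} \cap H_\xi} f(x)\, g(\xi)\, dx\, d\xi \;=\; \iint_\Omega f(x)\, g(\xi)\, d\mu(\xi, x), $$
where $d\mu(\xi, x) = d\xi \otimes d_\xi x$ with $d_\xi x$ the surface measure on the subsphere $\s^{\kappa n - 1} \cap H_\xi$. The right-hand side of the statement has the analogous expression against the measure $d\mu'(\xi, x) = dx \otimes d_x \xi$, so the claim reduces to the measure identity $d\mu = d\mu'$ on $\Omega$.

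To establish this identity I would argue that both measures are invariant under the subgroup $G \subset \SO(\kappa n)$ normalizing the block-diagonal $\SO(\kappa)$-action; this $G$ preserves the family $\{H_\xi\}$, hence $\Omega$ and both fiber-wise product constructions. Since $d\mu$ and $d\mu'$ have the same total mass $|\s^{\kappa n - 1}| \cdot |\s^{\kappa n - \kappa - 1}|$ on $\Omega$, uniqueness of a $G$-invariant measure on each $G$-orbit forces $d\mu = d\mu'$. The main obstacle is precisely this measure identification: it is immediate when $G$ acts transitively on $\Omega$ (as for $\kappa = 1, 2$, where $G$ is essentially $\mathrm{O}(n)$ respectively $\mathrm{U}(n)$), while for larger $\kappa$ one must either work orbit by orbit or give a local parameterization of $\Omega$ near a reference pair such as $(e_{\kappa n}, e_1)$ and check directly that the Jacobian of the swap $(\xi, x) \mapsto (x, \xi)$ equals $1$, a computation that reduces to a coordinate-free symmetric determinant identity attached to the common orthogonal structure on $\R^{\kappa n}$.
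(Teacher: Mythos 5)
Your unfolding strategy is genuinely different from the paper's, which proves Lemma~\ref{lemma_rtsd} Fourier-analytically: after reducing to infinitely smooth $f,g$ by density, it writes $(g\cdot r^{-\kappa n+\kappa})^{\wedge}=(2\pi)^{\kappa n}h\cdot r^{-\kappa}$ for a smooth $\kappa$-invariant $h$, then applies Lemma~\ref{lemma_rft} together with the spherical Parseval formula (Lemma~\ref{lemma_ParsevalOnTheSphere}) to move the transform from $f$ to $g$. The $\kappa$-invariance of $f$ and $g$ enters essentially at that point, via Lemma~\ref{lemma_ftc}.

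Your verification that the incidence relation $x\in H_\xi\Leftrightarrow\xi\in H_x$ is symmetric is correct and is a nice geometric point. But the reduction to the measure identity $d\mu=d\mu'$ on $\Omega$ is the entire content of the lemma, and for $\kappa\geq 3$ you leave it as a gap. Transitivity of a suitable group on $\Omega$ is what drives the uniqueness-of-invariant-measure argument, and it fails for $\kappa\geq 3$: the centralizer of the block-diagonal $\SO(\kappa)$ in $\mathrm{O}(\kappa n)$ is $\mathrm{O}(n)\otimes I_\kappa$ acting on $\R^n\otimes\R^\kappa$ (and the full normalizer acts through $\mathrm{O}(n)\otimes\mathrm{O}(\kappa)$), whose orbits on $\s^{\kappa n-1}$ are indexed by the singular values of a unit vector viewed as an $n\times\kappa$ matrix. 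That is a \emph{continuum} of orbits, so there is no finite orbit-by-orbit reduction and matching total masses does not determine the two measures. The Jacobian computation you defer to is precisely the missing work, not a routine check. A second warning sign: your argument nowhere uses that $f$ and $g$ are $\kappa$-invariant, so if it closed it would prove a strictly stronger statement than the lemma; the paper's proof genuinely needs the invariance, which suggests your route is not available in full generality. For $\kappa=1,2$, where $\mathrm{O}(n)$ (resp.\ $\mathrm{U}(n)$) does act transitively on $\Omega$, your argument is the classical one and is sound, but as written it does not prove the lemma for general $\kappa$.
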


\begin{proof}
We can assume that functions $f, g$ are infinitely-smooth. The Fourier transform of the homogeneous extension of $g$ of degree $-\kappa n + \kappa$ is an infinitely-smooth $\kappa$-invariant homogeneous function of degree $-\kappa$ on $\R^{\kappa n} \setminus \{0\}$, so for some infinitely-smooth $\kappa$-invariant function $h$ on $\s^{\kappa n - 1}$
$$ (g \cdot r^{-\kappa n + \kappa})^{\wedge} = (2\pi)^{\kappa n} h \cdot r^{-\kappa} \, .$$
Using Lemma \ref{lemma_rft} and spherical Parseval's formula, we now compute
\begin{align*}
 \int_{\s^{\kappa n - 1}} \RT^{\kappa} f(\xi) g(\xi) d\xi 
 		&= \frac{|S^{\kappa-1}|}{(2 \pi)^{\kappa}} \int_{\s^{\kappa n - 1}}  (f \cdot r^{-\kappa n+\kappa})^{\wedge}(\xi) (g\cdot r^{-\kappa n + \kappa})(\xi) d\xi \\
 		&= \frac{|S^{\kappa-1}|}{(2 \pi)^{\kappa}} \int_{\s^{\kappa n - 1}}  (f \cdot r^{-\kappa n+\kappa})^{\wedge}(\xi) (h \cdot r^{-\kappa})^{\wedge}(\xi) d\xi \\
 		&= \frac{|S^{\kappa-1}| (2\pi)^{\kappa n}}{(2 \pi)^{\kappa}} \int_{\s^{\kappa n - 1}}  f(\theta) (h \cdot r^{-\kappa})(\theta) d\theta \\
 		&= \frac{|S^{\kappa-1}|}{(2 \pi)^{\kappa}} \int_{\s^{\kappa n - 1}}  f(\theta) (g \cdot r^{-\kappa n + \kappa})^{\wedge}(\theta) d\theta \\
 		&= \int_{\s^{\kappa n - 1}}  f(\theta) \RT^{\kappa} g(\theta) d\theta \,.
\end{align*}  
\end{proof}

We say that a distribution $f$ on $\R^{\kappa n}$ is \text{\it $\kappa$-invariant} if 
$\left\langle f, \varphi(R_{\sigma} \cdot) \right\rangle=\left\langle f, \varphi \right\rangle$ for every test function $\varphi$ and for every $\sigma \in \SO(\kappa)$. Note that if two $\kappa$-invariant distributions coincide on the set of $\kappa$-invariant test functions, then they are equal. Indeed, let $f$ be one such distribution and let $\varphi$ be any test function. For $x\in \R^n$, set $\varphi_0(x) = \int_{\SO(\kappa)} \varphi(R_{\sigma} x) \, d\sigma$, where $d\sigma$ stands for the Haar probability measure on $\SO(\kappa)$. Then $\varphi_0$ is a $\kappa$-invariant test function and 
$$ \left\langle f, \varphi \right\rangle = \left\langle f, \varphi(R_{\sigma} \cdot) \right\rangle = \left\langle f, \int_{\SO(\kappa)} \varphi(R_{\sigma} \cdot) d\sigma \right\rangle = \left\langle f, \varphi_0 \right\rangle \, .$$

\begin{lemma}\label{lemma_rfts}
Let $f$ be an even continuous $\kappa$-invariant function on $\s^{\kappa n - 1}$, then for $\xi \in \s^{\kappa n - 1}$
$$ \RT^{\kappa} f (\xi) = \frac{|S^{\kappa-1}|}{(2 \pi)^{\kappa}} (f \cdot r^{-\kappa n+\kappa})^{\wedge}(\xi)\, .$$
\end{lemma}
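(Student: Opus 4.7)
The plan is to reduce the continuous case to the smooth case handled in Lemma \ref{lemma_rft} via an approximation argument. Because the right-hand side is no longer a classical function when $f$ is only continuous, the identity must be interpreted distributionally: I view $f\cdot r^{-\kappa n+\kappa}$ as a tempered distribution on $\R^{\kappa n}$ (it is locally integrable since the degree of homogeneity $-\kappa n+\kappa$ exceeds $-\kappa n$), so its Fourier transform is a well-defined tempered distribution, homogeneous of degree $-\kappa$. The claim of the lemma is that this Fourier transform agrees (as a distribution) with the $(-\kappa)$-homogeneous extension of the continuous function $\tfrac{|S^{\kappa-1}|}{(2\pi)^\kappa}\RT^\kappa f$ on the sphere.

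First, I would construct a sequence $f_m$ of even, infinitely-smooth, $\kappa$-invariant functions on $\s^{\kappa n-1}$ converging uniformly to $f$. The standard device is to convolve $f$ on the sphere with a sequence of smooth $\SO(\kappa n)$-zonal approximate identities; since such kernels are invariant under all of $\SO(\kappa n)$ they certainly commute with the subgroup $\{R_\sigma : \sigma \in \SO(\kappa)\}$, so $\kappa$-invariance of $f$ is preserved, and smoothness of the kernel transfers to $f_m$. Uniform convergence follows from continuity of $f$.

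Next, for each $f_m$ the smooth version of the lemma gives the pointwise identity
\[
\RT^\kappa f_m(\xi) \;=\; \frac{|S^{\kappa-1}|}{(2\pi)^\kappa}\,(f_m \cdot r^{-\kappa n+\kappa})^{\wedge}(\xi), \qquad \xi\in\s^{\kappa n-1}.
\]
On the left, $\RT^\kappa f_m \to \RT^\kappa f$ uniformly on the sphere, since $\RT^\kappa$ is continuous from $C(\s^{\kappa n-1})$ to itself (it is an average over a compact submanifold). On the right, I bound, for any test function $\varphi$,
\[
\bigl|\langle (f_m-f)\cdot r^{-\kappa n+\kappa},\varphi\rangle\bigr| \;\le\; \|f_m-f\|_\infty \int_{\R^{\kappa n}} |x|_2^{-\kappa n+\kappa}\,|\varphi(x)|\,dx,
\]
and the integral is finite because $-\kappa n+\kappa > -\kappa n$ near the origin and $\varphi$ has rapid decay at infinity. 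Hence $f_m\cdot r^{-\kappa n+\kappa}\to f\cdot r^{-\kappa n+\kappa}$ in $\mathcal{S}'(\R^{\kappa n})$, and continuity of the Fourier transform on $\mathcal{S}'$ gives $(f_m\cdot r^{-\kappa n+\kappa})^{\wedge} \to (f\cdot r^{-\kappa n+\kappa})^{\wedge}$ distributionally.

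Finally, passing to the limit on both sides, the homogeneous extensions $\RT^\kappa f_m \cdot r^{-\kappa}$ converge to $\RT^\kappa f\cdot r^{-\kappa}$ in $\mathcal{S}'$ (uniform convergence of the spherical parts plus local integrability of $r^{-\kappa}$ away from the origin, with the $\kappa$-invariance and Lemma \ref{lemma_ftc}-type regularity ensuring no pathology at the origin beyond the standard homogeneous distribution structure). Identifying the two limits yields the desired identity as distributions, hence (since $\RT^\kappa f$ is continuous) pointwise on the sphere after interpreting the right-hand side via its canonical $\kappa$-invariant homogeneous representative. The main obstacle is this last interpretive step: justifying that the distributional equality collapses to a genuine equality of continuous $\kappa$-invariant functions on $\s^{\kappa n-1}$. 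This is handled by invoking the self-duality in Lemma \ref{lemma_rtsd} together with Parseval, which shows that for every $\kappa$-invariant test function on the sphere both sides pair identically, and then appealing to the density of such test functions in $C_\kappa(\s^{\kappa n-1})$ to conclude pointwise agreement.
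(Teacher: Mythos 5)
Your approximation argument is correct, but it takes a genuinely different route from the paper. The paper proves Lemma~\ref{lemma_rfts} by a direct duality computation: starting from $\langle (f\cdot r^{-\kappa n+\kappa})^{\wedge}, \varphi\rangle$ for a $\kappa$-invariant test function $\varphi$, it rewrites the radial integral over $H_\theta^{\perp}$, invokes Lemma~\ref{lemma_ftf} to pass to $H_\theta$, recognizes the resulting inner integral as $\RT^\kappa$ applied to a radial average of $\varphi$, and then uses the self-duality Lemma~\ref{lemma_rtsd} to move $\RT^\kappa$ onto $f$; the conclusion rests on the observation (stated just before the lemma) that two $\kappa$-invariant distributions agreeing on $\kappa$-invariant test functions are equal. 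You instead reduce to the smooth case of Lemma~\ref{lemma_rft} by mollifying $f$ on the sphere and passing to the limit in $\mathcal{S}'$. Both routes ultimately lean on Lemma~\ref{lemma_rft} (the paper's Lemma~\ref{lemma_rtsd} is itself proved via Lemma~\ref{lemma_rft}), but yours is more modular, while the paper's is self-contained at the level of one display chain and makes the role of $\kappa$-invariance of test functions explicit.

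One part of your write-up should be tightened. Your final paragraph flags the passage from a distributional equality to a pointwise one as the ``main obstacle'' and proposes to close it with self-duality, Parseval, and density of test functions. This is both unnecessary and slightly misaligned with where those tools actually belong. Once you have established, by uniqueness of limits in $\mathcal{S}'$, that
$$\frac{(2\pi)^{\kappa}}{|S^{\kappa-1}|}\,\RT^{\kappa}f\cdot r^{-\kappa} \;=\; (f\cdot r^{-\kappa n+\kappa})^{\wedge}\quad\text{in }\mathcal{S}'(\R^{\kappa n}),$$
the left-hand side is by construction the $(-\kappa)$-homogeneous extension of the continuous function $\RT^\kappa f$, so the distributional identity already tells you that $(f\cdot r^{-\kappa n+\kappa})^{\wedge}$ is represented by that continuous, $\kappa$-invariant, $(-\kappa)$-homogeneous function off the origin. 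The pointwise statement on $\s^{\kappa n-1}$ is then just a restriction of that representative; no duality pairing or density argument is needed. Self-duality and Parseval appear in the paper's proof as part of the computation, not as a device to upgrade a distributional identity to a pointwise one, so invoking them at this stage misidentifies their role.
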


\begin{proof}
Let $\varphi$ be any $\kappa$-invariant test function, then
$$ \int_{H_{\xi}^{\perp}} \hat{\varphi}(x) dx = \int_{\s^{\kappa n - 1} \cap H_{\xi}^{\perp}} \int_0^{\infty} \hat{\varphi}(r \theta) r^{\kappa-1} dr \, d\theta = |S^{\kappa-1}| \int_0^{\infty} \hat{\varphi}(r \xi) r^{\kappa-1} dr \, .$$
Using this observation, we compute
\begin{align*}
	\left\langle (f\cdot r^{-\kappa n + \kappa})^{\wedge}, \varphi \right\rangle 
		&= \int_{\s^{\kappa n - 1}} f(\theta) \int_0^{\infty} \hat{\varphi}(r \theta) r^{\kappa-1} dr \, d\theta \\
		&= \frac{1}{|S^{\kappa-1}|} \int_{\s^{\kappa n - 1}} f(\theta) \int_{H_{\theta}^{\perp}} \hat{\varphi}(x) dx \, d\theta \\
		\intertext{by Lemma \ref{lemma_ftf}}
		&= \frac{(2\pi)^{\kappa}}{|S^{\kappa-1}|} \int_{\s^{\kappa n - 1}} f(\theta) \int_{H_{\theta}} \varphi(x) dx \, d\theta \\
		&= \frac{(2\pi)^{\kappa}}{|S^{\kappa-1}|} \int_{\s^{\kappa n - 1}} f(\theta) \int_{\s^{\kappa n - 1} \cap H_{\theta}} \int_0^{\infty} \varphi(rx) r^{\kappa n -\kappa -1} dr \, dx \, d\theta \\
		&= \frac{(2\pi)^{\kappa}}{|S^{\kappa-1}|} \int_{\s^{\kappa n - 1}} f(\theta) \RT^{\kappa}\left( \int_0^{\infty} \varphi(r\cdot) r^{\kappa n -\kappa -1} dr \right)(\theta) d\theta\\
		\intertext{by Lemma \ref{lemma_rtsd}}
		&= \frac{(2\pi)^{\kappa}}{|S^{\kappa-1}|} \int_{\s^{\kappa n - 1}} \RT^{\kappa} f(\theta) \int_0^{\infty} \varphi(r \theta) r^{\kappa n -\kappa -1} dr \, d\theta \\
		&= \frac{(2\pi)^{\kappa}}{|S^{\kappa-1}|}  \left\langle |x|_2^{-\kappa} \RT^{\kappa} f(x/|x|_2), \varphi \right\rangle \, .
\end{align*}
This shows that $\kappa$-invariant distributions $(f\cdot r^{-\kappa n + \kappa})^{\wedge}$ and $\frac{(2\pi)^{\kappa}}{|S^{\kappa-1}|} |x|_2^{-\kappa} \RT^{\kappa} f(\frac{x}{|x|_2}) $ coincide on the set of $\kappa$-invariant test functions and are therefore equal.
\end{proof}

The above lemma allows to express the volume of sections of star bodies as the Fourier transform of a certain function. The real version of this fact was proved in \cite{K1} and the complex version was proved in \cite{KoldobskyKonigZymonopoulou2008}, \cite{MR3106735}.
%, see also Theorem 1 in \cite{Zymonopoulou2010} for a different proof of this result for infinitely-smooth bodies.

\begin{theorem}\label{thm_vos}
For any origin-symmetric star body $D$ in $\K^n$ and for any unit vector $\xi\in \R^{\kappa n}$, we have
$$ |D\cap  H_{\xi} | =  \frac{|S^{\kappa-1}|}{(2 \pi)^{\kappa} (\kappa n - \kappa)} (\|\cdot\|_D^{-\kappa n + \kappa})^{\wedge}(\xi) \, , $$
where $H_{\xi}$ is the hyperplane in $\K^n$ determined by $\xi$, see Section \ref{SectionIntersectionBodiesOfStarBodies} for the definition.
\end{theorem}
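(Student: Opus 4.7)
The proof is essentially a direct chaining of two results already established in this section: the polar-coordinate formula (\ref{eq_var}) expressing the section volume as a spherical Radon transform, and Lemma \ref{lemma_rfts} identifying that Radon transform with a Fourier transform on the space of even, continuous, $\kappa$-invariant functions on $\s^{\kappa n - 1}$. So the plan is simply to justify that the hypotheses of Lemma \ref{lemma_rfts} apply to $f = \|\cdot\|_D^{-\kappa n + \kappa}$ and then compose.

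First I would verify the regularity requirements. Since $D$ is an origin-symmetric star body in $\K^n$, the restriction of $\|\cdot\|_D^{-\kappa n + \kappa}$ to $\s^{\kappa n - 1}$ is continuous (because the radial function of a star body is continuous), even (because $D$ is origin-symmetric), and $\kappa$-invariant (because $D$ is $\kappa$-balanced). In addition, since $\|\cdot\|_D^{-\kappa n + \kappa}$ is already homogeneous of degree $-\kappa n + \kappa$ on $\R^{\kappa n} \setminus \{0\}$, its natural homogeneous extension from the sphere coincides with itself, so the notation in Lemma \ref{lemma_rfts} is unambiguous.

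Then I would apply equation (\ref{eq_var}) to $L = D$ at the vector $\xi$,
$$ |D \cap H_{\xi}| \;=\; \frac{1}{\kappa n - \kappa}\, \RT^{\kappa}\!\bigl(\|\cdot\|_D^{-\kappa n + \kappa}\bigr)(\xi),$$
and substitute the identity furnished by Lemma \ref{lemma_rfts},
$$ \RT^{\kappa}\!\bigl(\|\cdot\|_D^{-\kappa n + \kappa}\bigr)(\xi) \;=\; \frac{|S^{\kappa-1}|}{(2\pi)^{\kappa}}\, \bigl(\|\cdot\|_D^{-\kappa n + \kappa}\bigr)^{\wedge}(\xi).$$
Combining the two displays yields exactly the claimed formula.

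There is no genuine obstacle here; this is a clean assembly step rather than a computation. The only mild subtlety is that Lemma \ref{lemma_rfts} was stated for continuous $\kappa$-invariant functions on the sphere, so one should remark that the function $\|\cdot\|_D^{-\kappa n+\kappa}$ meets those hypotheses (which, as noted above, follows immediately from $D$ being an origin-symmetric, $\kappa$-balanced star body). Thus the theorem reduces to an application of the Radon--Fourier identification already proved.
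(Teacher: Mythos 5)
Your proof is correct and follows the same route as the paper: combine equation (\ref{eq_var}) with Lemma \ref{lemma_rfts}. The only difference is that you also spell out why the hypotheses of Lemma \ref{lemma_rfts} apply, which the paper leaves implicit.
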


\begin{proof}
By (\ref{eq_var}) and Lemma \ref{lemma_rfts}, we obtain
$$ |D\cap  H_{\xi} |  = \frac{1}{\kappa n - \kappa} \RT^{\kappa} (\|\cdot\|_D^{-\kappa n + \kappa})(\xi) =  \frac{|S^{\kappa-1}|}{(2 \pi)^{\kappa} (\kappa n - \kappa)} (\|\cdot\|_D^{-\kappa n + \kappa})^{\wedge}(\xi) \, .$$
\end{proof}

Theorem \ref{thm_vos} provides a version of the Funk-Minkowski uniqueness theorem, see \cite{MR2251886}, Th. 7.2.3.

\begin{co}
Let $K, L$ be origin-symmetric star bodies in $\K^n$. If for every direction $\xi\in \s^{\kappa n-1}$
$$ |K\cap  H_{\xi} | = |L \cap  H_{\xi} | \, ,$$
then $K=L$.
\end{co}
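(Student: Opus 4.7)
The plan is to reduce the hypothesis to an equality of Fourier transforms on the unit sphere via Theorem \ref{thm_vos}, and then to use homogeneity together with Fourier inversion to recover the radial functions of $K$ and $L$.

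I assume $n\geq 2$ (for $n=1$ the subspace $H_\xi$ collapses to $\{0\}$ and the hypothesis is vacuous). First, I would apply Theorem \ref{thm_vos} to the hypothesis $|K\cap H_\xi|=|L\cap H_\xi|$ to obtain
$$ (\|\cdot\|_K^{-\kappa n+\kappa})^{\wedge}(\xi) = (\|\cdot\|_L^{-\kappa n+\kappa})^{\wedge}(\xi) \quad \text{for every } \xi\in\s^{\kappa n-1}. $$
Since $K$ and $L$ are origin-symmetric $\kappa$-balanced star bodies, the functions $\|\cdot\|_K^{-\kappa n+\kappa}$ and $\|\cdot\|_L^{-\kappa n+\kappa}$ are even, $\kappa$-invariant, locally integrable on $\R^{\kappa n}$, and homogeneous of degree $-\kappa n+\kappa$. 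Their Fourier transforms are therefore even $\kappa$-invariant homogeneous distributions of degree $-\kappa$, and by Lemma \ref{lemma_rfts} they are represented on $\s^{\kappa n-1}$---and hence on $\R^{\kappa n}\setminus\{0\}$ by homogeneity---by continuous functions.

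Next, homogeneity promotes the equality from $\s^{\kappa n-1}$ to all of $\R^{\kappa n}\setminus\{0\}$. Both Fourier transforms are locally integrable on $\R^{\kappa n}$ since $-\kappa>-\kappa n$, so their difference is a distribution supported at the origin; any such non-zero distribution is a finite linear combination of derivatives of $\delta$, and is therefore homogeneous of some degree at most $-\kappa n$, strictly less than $-\kappa$. Hence the two Fourier transforms coincide as tempered distributions. Fourier inversion then yields $\|\cdot\|_K^{-\kappa n+\kappa}=\|\cdot\|_L^{-\kappa n+\kappa}$ as distributions, and since both sides are continuous on $\R^{\kappa n}\setminus\{0\}$, pointwise equality follows. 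Taking the appropriate root gives $\|\cdot\|_K=\|\cdot\|_L$, i.e.\ $K=L$.

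The main subtlety I anticipate is this last passage: promoting equality of the Fourier transforms from the unit sphere to equality as tempered distributions on $\R^{\kappa n}$. I handle it through the homogeneous-degree count that rules out any non-trivial distributional correction supported at the origin, together with the continuity of the Fourier transforms on $\R^{\kappa n}\setminus\{0\}$ provided by Lemma \ref{lemma_rfts}.
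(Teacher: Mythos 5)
Your proof is correct and follows essentially the same route as the paper: apply Theorem \ref{thm_vos} to translate the hypothesis into equality of the Fourier transforms $(\|\cdot\|_K^{-\kappa n+\kappa})^{\wedge}$ and $(\|\cdot\|_L^{-\kappa n+\kappa})^{\wedge}$ on $\s^{\kappa n-1}$, use homogeneity of degree $-\kappa$ to extend this to $\R^{\kappa n}\setminus\{0\}$, upgrade to equality as distributions, and then invoke the uniqueness of the Fourier transform. The paper asserts the distributional equality with a single sentence; you supply the (correct) justification that the difference is a homogeneous distribution of degree $-\kappa$ supported at the origin and hence vanishes because nonzero homogeneous distributions supported at the origin in $\R^{\kappa n}$ have degree $\leq -\kappa n<-\kappa$ for $n\geq 2$. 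This is just a more explicit rendering of the same argument, not a different approach.
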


\begin{proof}
By Theorem \ref{thm_vos}, the hypothesis of the corollary implies that homogeneous of degree $-\kappa$ continuous functions on $\R^{\kappa n}\setminus \{0\}$, $(\|\cdot\|_K^{-\kappa n + \kappa})^{\wedge}$ and $(\|\cdot\|_L^{-\kappa n + \kappa})^{\wedge}$ coincide on the sphere $\s^{\kappa n -1}$. Thus they coincide as distributions on the whole $\R^{\kappa n}$. The result follows by the uniqueness theorem for the Fourier transform of distributions. 
\end{proof}

%%%%%%%%%%%%%%%%%%%%%%%%%%%%%%%%%%%%%%%%%%%%%%%%%%%%%%%%%%%%%%%%%%%%%%%%%%%%%%%%%%%%%%%%%%%%%%%%%%%%%%%%%%%%%%%%%%%%%%%%%%%%%%%%
%%%%%%%%%%%%%%%%%%%%%%%%%%%  Section: Intersection Bodies in $\K^n$  %%%%%%%%%%%%%%%%%%%%%%%%%%%%%%%%%%%%%%%%%%%%%%%%%%%%%%%%%%%%%%%%%% %%%%%%%%%%%%%%%%%%%%%%%%%%%%%%%%%%%%%%%%%%%%%%%%%%%%%%%%%%%%%%%%%%%%%%%%%%%%%%%%%%%%%%%%%%%%%%%%%%%%%%%%%%%%%%%%%%%%%%%%%%%%%%%%%%%%
\section{Intersection Bodies in $\K^n$}\label{SectionIntersectionBodies}
Intersection bodies of star bodies in $\K^n$ were introduced in Section \ref{SectionIntersectionBodiesOfStarBodies}. Now we define a more general class of intersection bodies by extending the equality (\ref{eq_IntersectionBodyRadon}) to measures, as it was done in \cite{GoodeyLutwakWeil1996} for the real case and in \cite{MR3106735} for the complex case. A finite Borel measure $\mu$ on the sphere $\s^{\kappa n -1}$ is called \text{\it $\kappa$-invariant} if for any continuous function $f$ on the sphere $\s^{\kappa n -1}$ and for any $\sigma \in \SO(\kappa)$
$$ \int_{\s^{\kappa n -1}} f(x) d\mu(x) = \int_{\s^{\kappa n -1}} f(R_{\sigma} x) d\mu(x) \, .$$
The spherical Radon transform on $\K^n$ of an $\kappa$-invariant measure $\mu$ on the sphere $\s^{\kappa n -1}$ is defined as a functional $\RT^{\kappa} \mu$ on the space $C_{\kappa}(\s^{\kappa n - 1})$ by
$$ (\RT^{\kappa} \mu, f) = \int_{\s^{\kappa n -1}} \RT^{\kappa} f(x) d\mu(x)  \, .$$
Surely, the spherical Radon transform on $\K^n$ of a finite $\kappa$-invariant Borel measure $\mu$ on $\s^{\kappa n -1}$, $\RT^{\kappa} \mu$, is again a finite $\kappa$-invariant Borel measure on $\s^{\kappa n -1}$. From the self-duality of the spherical Radon transform on $\K^n$, Lemma \ref{lemma_rtsd}, it follows that if the measure $\mu$ has a continuous density $g$, then the measure $\RT^{\kappa} \mu$ has the density $\RT^{\kappa} g$.

\begin{df}\label{df_ib}
An origin-symmetric star body $K$ in $\K^n$ is called an intersection body in $\K^n$ if there exists a finite $\kappa$-invariant Borel measure $\mu$ on the sphere $\s^{\kappa n -1}$ so that $\|\cdot\|_K^{-\kappa}$ and $\RT^{\kappa} \mu$ are equal as functionals on $C_{\kappa}(\s^{\kappa n - 1})$; that is, if for any $f\in C_{\kappa}(\s^{\kappa n - 1})$
\begin{equation*}
	\int_{\s^{\kappa n -1}} \|x\|_K^{-\kappa} f(x) dx = \int_{\s^{\kappa n -1}} \RT^{\kappa} f(x) d\mu(x) \, .
\end{equation*}
\end{df}

It follows from the self-duality of the spherical Radon transform on $\K^n$ and equation (\ref{eq_IntersectionBodyRadon}), that every intersection body of a star body in $\K^n$ is an intersection body in $\K^n$ in the sense of Definition \ref{df_ib}. 

It was shown in \cite{K2} that intersection bodies in $\R^n$ admit the following Fourier analytic characterization: an origin-symmetric star body $K$ in $\R^n$ is an intersection body if and only if the function $\|\cdot\|_K^{-1}$ represents a positive definite distribution. Intersection bodies in $\K^n$ allow for a similar characterization. It is easy to see this for intersection bodies of star bodies in $\K^n$. By Theorem \ref{thm_vos} we have:
$$ 	\|\xi\|^{-\kappa}_{I_{\K}(L)} = \frac{\kappa}{|S^{\kappa-1}|} |L \cap H_{\xi}| = \frac{1}{(2\pi)^{\kappa} (n-1)} (\|\cdot\|_L^{-\kappa n + \kappa})^{\wedge}(\xi) \, .$$
Both sides are even homogeneous functions of degree $-\kappa$ and agree on $\s^{\kappa n-1}$, so they are equal as distributions on $\R^{\kappa n}$. Since the Fourier transform of even distributions is self-invertible up to a constant factor, we get
\begin{equation}\label{eq_ibpd}
\left( \|\cdot\|^{-\kappa}_{I_{\K}(L)} \right)^{\wedge} = \frac{(2\pi)^{\kappa n}}{(2\pi)^{\kappa} (n-1)} \|\cdot\|_L^{-\kappa n + \kappa} > 0 \, .
\end{equation}
Thus $\|\cdot\|^{-\kappa}_{I_{\K}(L)}$ is positive definite. Furthermore, if the Fourier transform of $\|\cdot\|^{-\kappa}_K$ is an even strictly positive $\kappa$-invariant function on the sphere, then using equation (\ref{eq_ibpd}) we can construct a star body $L$ in $\K^n$ so that $K=I_{\K}(L)$. Next we show that this Fourier analytic characterization holds for arbitrary intersection bodies in $\K^n$.

\begin{theorem}\label{th_kibpdd}
An origin-symmetric star body $D$ in $\K^n$ is an intersection body in $\K^n$ if and only if $\|\cdot\|^{-\kappa}_D$ represents a positive definite distribution on $\R^{\kappa n}$. 
\end{theorem}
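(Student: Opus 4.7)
The plan is to establish both implications by reducing to the distributional identity (\ref{eq_posdefmsre}) for positive definite functions, with Lemma \ref{lemma_ftf} as the bridge between spherical Radon transforms and radial integrals of Fourier transforms. In both directions, the computation is cleanest when restricted to $\kappa$-invariant test functions, and the $\kappa$-invariance of the distribution $\|\cdot\|_D^{-\kappa}$ will let me $\SO(\kappa)$-symmetrize freely.

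For the forward implication, I would start with a nonnegative test function $\varphi$ and replace it by its $\SO(\kappa)$-average $\varphi_0$ (still nonnegative and $\kappa$-invariant) without changing $\langle (\|\cdot\|_D^{-\kappa})^{\wedge}, \varphi \rangle$. Expanding this pairing as $\int_{\R^{\kappa n}} \|x\|_D^{-\kappa} \hat{\varphi}_0(x)\, dx$ and switching to polar coordinates produces $\int_{\s^{\kappa n - 1}} \|\theta\|_D^{-\kappa} G(\theta)\, d\theta$ with $G(\theta) := \int_0^{\infty} \hat{\varphi}_0(r\theta) r^{\kappa n - \kappa - 1}\, dr$, a continuous $\kappa$-invariant function. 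Invoking the intersection body definition converts this to $\int \RT^{\kappa} G(\xi)\, d\mu(\xi)$; unfolding the spherical Radon transform identifies $\RT^{\kappa} G(\xi)$ with $\int_{H_{\xi}} \hat{\varphi}_0(y)\, dy$, and Lemma \ref{lemma_ftf} (applied with $H = H_{\xi}^{\perp}$) rewrites this as $(2\pi)^{\kappa n - \kappa} \int_{H_{\xi}^{\perp}} \varphi_0(y)\, dy \geq 0$. Since $\mu$ is nonnegative, the entire pairing is nonnegative.

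For the converse, positive definiteness of $\|\cdot\|_D^{-\kappa}$ together with (\ref{eq_posdefmsre}) produces a tempered measure $\mu_0$ on $\s^{\kappa n - 1}$. I would first symmetrize $\mu_0$ over $\SO(\kappa)$; because the left-hand side of (\ref{eq_posdefmsre}) is invariant under $\varphi \mapsto \varphi(R_{\sigma} \cdot)$, the averaged measure $\tilde{\mu}_0$ still reproduces the identity whenever $\varphi$ is $\kappa$-invariant. Restricting to such $\varphi$, polar coordinates write the left-hand side as $\int \|\theta\|_D^{-\kappa} F(\theta)\, d\theta$, where $F(\theta) := \int_0^{\infty} \varphi(r\theta) r^{\kappa n - \kappa - 1}\, dr$. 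On the right, the $\kappa$-invariance of $\hat{\varphi}$ reduces the radial integral to $|S^{\kappa-1}|^{-1} \int_{H_{\xi}^{\perp}} \hat{\varphi}(x)\, dx$, which Lemma \ref{lemma_ftf} (this time with $H = H_{\xi}$) identifies with $(2\pi)^{\kappa} |S^{\kappa-1}|^{-1} \int_{H_{\xi}} \varphi(x)\, dx = (2\pi)^{\kappa} |S^{\kappa-1}|^{-1} \RT^{\kappa} F(\xi)$. Setting $\mu := (2\pi)^{\kappa} |S^{\kappa-1}|^{-1} \tilde{\mu}_0$, the intersection body identity holds for every $F$ arising this way from a $\kappa$-invariant test function.

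The main obstacle is then extending this identity from the range of such $F$'s to all of $C_{\kappa}(\s^{\kappa n - 1})$. I would handle it by a direct density argument: given $f \in C_{\kappa}(\s^{\kappa n - 1})$, approximate uniformly by a smooth $\kappa$-invariant $\tilde{f}$ (standard mollification followed by Haar-averaging over $\SO(\kappa)$), and then realize $\tilde{f} = F$ explicitly by taking $\varphi(x) := \tilde{f}(x/|x|_2)\, \eta(|x|_2)$, where $\eta \in C_c^{\infty}((1,2))$ is normalized so that $\int_0^{\infty} \eta(r) r^{\kappa n - \kappa - 1}\, dr = 1$. Because $\eta$ vanishes near the origin, this $\varphi$ is a bona fide smooth compactly supported $\kappa$-invariant test function. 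Both sides of the intersection body identity are continuous linear functionals on $C_{\kappa}(\s^{\kappa n - 1})$ in the sup norm, so the identity passes to the uniform limit and holds on all of $C_{\kappa}(\s^{\kappa n - 1})$, exhibiting $D$ as an intersection body in $\K^n$.
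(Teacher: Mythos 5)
Your proof is correct and follows essentially the same route as the paper's: the forward direction is the identical computation (polar coordinates, Definition \ref{df_ib}, and Lemma \ref{lemma_ftf}), and the converse symmetrizes the measure from (\ref{eq_posdefmsre}), rewrites both sides via Lemma \ref{lemma_ftf} into the $\RT^{\kappa}$-form, and closes by a density argument. The only cosmetic difference is in the last step: the paper plugs a factorized $\varphi(x)=u(r)v(\theta)$ into the polar-coordinate identity and cancels the radial factor, while you normalize the radial profile $\eta$ so that $F=\tilde f$ directly — these are the same idea.
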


\begin{proof}
Suppose that $D$ is an intersection body in $\K^n$ with the corresponding measure $\mu$. It is enough to show $\left\langle (\|\cdot\|^{-\kappa}_D)^{\wedge}, \varphi \right\rangle \geq 0$ for every even $\kappa$-invariant non-negative test function $\varphi$. We compute
\begin{align*}
	\left\langle (\|\cdot\|^{-\kappa}_D)^{\wedge}, \varphi \right\rangle 	
		&= \int_{\R^{\kappa n}} \|x\|^{-\kappa}_D \hat{\varphi}(x) dx \\
		&= \int_{\s^{\kappa n -1}} \|\theta\|^{-\kappa}_D \left( \int_0^{\infty} \hat{\varphi}(r\theta) r^{\kappa n - \kappa -1} dr \right)  d\theta \\
		\intertext{by Definition \ref{df_ib}}
		&= \int_{\s^{\kappa n -1}} \RT^{\kappa} \left( \int_0^{\infty} \hat{\varphi}(r\cdot) r^{\kappa n - \kappa -1} dr \right)(\theta)  d\mu(\theta) \\
		&= \int_{\s^{\kappa n -1}} \int_{\s^{\kappa n -1}\cap H_{\theta}} \int_0^{\infty} \hat{\varphi}(rx) r^{\kappa n - \kappa -1} dr \,  dx \, d\mu(\theta) \\
		&= \int_{\s^{\kappa n -1}} \left( \int_{H_{\theta}} \hat{\varphi}(x) dx \right)  d\mu(\theta) \\
		\intertext{by Lemma \ref{lemma_ftf}}
		&= (2\pi)^{\kappa n - \kappa} \int_{\s^{\kappa n -1}} \left( \int_{H_{\theta}^{\perp}} \varphi(x) dx \right)  d\mu(\theta) \geq 0 \, .
\end{align*}

Conversely, suppose that $\|\cdot\|^{-\kappa}_D$ is a positive definite distribution, then there exists a finite Borel measure $\mu$ on $\s^{\kappa n -1}$ so that for every test function $\varphi$
\begin{equation}\label{eq_heqpdd}
 \int_{\R^{\kappa n}} \|x\|_D^{-\kappa} \varphi(x) dx = \int_{\s^{\kappa n-1}} \left( \int_0^{\infty} t^{\kappa-1} \hat{\varphi}(t\xi) dt \right) d\mu(\xi) \, , 
\end{equation}
see (\ref{eq_posdefmsre}). Since the body $D$ is $\kappa$-balanced, we can assume that the measure $\mu$ is $\kappa$-invariant. Indeed, define a $\kappa$-invariant measure $\tilde{\mu}$ on the sphere $\s^{\kappa n - 1}$ by $ \tilde{\mu}(E)=\int_{\SO(\kappa)} \mu(R_{\sigma}E) d\sigma$, with $E\subset \s^{\kappa n - 1}$ and $d\sigma$ the Haar probability measure on $\SO(\kappa)$. Then for a $\kappa$-invariant test function $\varphi$, we have
\begin{align*}
	 \int_{\s^{\kappa n-1}} \left( \int_0^{\infty} t^{\kappa-1} \hat{\varphi}(t\xi) dt \right) d\tilde{\mu}(\xi) &= \int_{\SO(\kappa)}  \int_{\R^{\kappa n}} \|R_{\sigma} x\|_D^{-\kappa} \varphi(R_{\sigma} x) dx \, d\sigma \\
	 &= \int_{\SO(\kappa)}  \int_{\R^{\kappa n}} \| x\|_D^{-\kappa} \varphi(x) dx \, d\sigma \\
	 &= \int_{\s^{\kappa n-1}} \left( \int_0^{\infty} t^{\kappa-1} \hat{\varphi}(t\xi) dt \right) d\mu(\xi) \, .
\end{align*}
Thus we showed that there exists a $\kappa$-invariant measure $\tilde{\mu}$ that coincides with the measure $\mu$ on the set of $\kappa$-invariant test functions and hence on the set of all test functions. To see this, let $\psi$ be any test function and $\psi_0(x) = \int_{\SO(\kappa)} \psi(R_{\sigma}x) d\sigma$ as on p. 8. Note that $(\psi_0)^{\wedge} = (\hat\psi)_0$. By the argument presented before Lemma \ref{lemma_rfts} and above observations, we have
\begin{align*}
	 \int_{\s^{\kappa n-1}} \left( \int_0^{\infty} t^{\kappa-1} \hat{\psi}(t\xi) dt \right) d\tilde{\mu}(\xi) 
	 &= \int_{\s^{\kappa n-1}} \left( \int_0^{\infty} t^{\kappa-1} (\hat{\psi})_0(t\xi) dt \right) d\tilde{\mu}(\xi) \\
	 &= \int_{\s^{\kappa n-1}} \left( \int_0^{\infty} t^{\kappa-1} (\psi_0)^{\wedge}(t\xi) dt \right) d\mu(\xi) \\
	 &= \int_{\R^{\kappa n}} \|x\|_D^{-\kappa} \psi_0(x) dx \\
	 &=  \int_{\SO(\kappa)} \left( \int_{\R^{\kappa n}} \|R_{\sigma} x\|_D^{-\kappa} \psi(R_{\sigma} x) dx \right) d\sigma \\
	 &= \int_{\R^{\kappa n}} \|x\|_D^{-\kappa} \psi(x) dx \\
	 &= \int_{\s^{\kappa n-1}} \left( \int_0^{\infty} t^{\kappa-1} \hat{\psi}(t\xi) dt \right) d\mu(\xi) \, .
\end{align*}

Recall, from the proof of Lemma \ref{lemma_rfts}, that for a $\kappa$-invariant test function $\varphi$
$$ \int_{H_{\xi}^{\perp}} \hat{\varphi}(x) dx = |S^{\kappa-1}| \int_0^{\infty} \hat{\varphi}(r \xi) r^{\kappa-1} dr \, .$$
Thus for even $\kappa$-invariant test functions, the right-hand side of equation (\ref{eq_heqpdd}) can be written as
\begin{align*}
	\frac{1}{|S^{\kappa-1}|}\int_{\s^{\kappa n-1}} & \left( \int_{H_{\xi}^{\perp}} \hat{\varphi}(x) dx \right)  d\mu(\xi) 
		= \frac{(2\pi)^{\kappa}}{|S^{\kappa-1}|} \int_{\s^{\kappa n-1}} \left( \int_{H_{\xi}} \varphi(x) dx \right) d\mu(\xi) \, , \\
	\intertext{where we used Lemma \ref{lemma_ftf}, and now, writing the interior integral in polar coordinates, we obtain}
		&= \frac{(2\pi)^{\kappa}}{|S^{\kappa-1}|} \int_{\s^{\kappa n-1}} \RT^{\kappa} \left( \int_0^{\infty} \varphi(r\cdot) r^{\kappa n-\kappa -1} dr \right)(\xi) d\mu(\xi) \, .
\end{align*}
Writing the left-hand side in equation (\ref{eq_heqpdd}) in polar coordinates, we obtain that for any even $\kappa$-invariant test function $\varphi$
\begin{align}
	\int_{\s^{\kappa n-1}} \|\theta\|_D^{-\kappa} & \left( \int_0^{\infty} \varphi(r\theta) r^{\kappa n-\kappa -1} dr \right)
	d\theta \nonumber \\
	&= \frac{(2\pi)^{\kappa}}{|S^{\kappa-1}|} \int_{\s^{\kappa n-1}} \RT^{\kappa} \left( \int_0^{\infty} \varphi(r\cdot) r^{\kappa n-\kappa -1} dr \right)\!(\xi) \, d\mu(\xi) \, . \label{eq_hepcpd}
\end{align}
Let $u$ be some non-negative test function on $\R$ and let $v$ be an arbitrary infinitely-smooth even $\kappa$-invariant function on $\s^{\kappa n -1}$. For $x\in \R^{\kappa n}$, set $\varphi(x)=u(r)v(\theta)$, where $x=r\theta$ with $r\in[0,\infty)$ and $\theta\in\s^{\kappa n -1}$. Evaluating equation (\ref{eq_hepcpd}) for such test functions $\varphi$, yields
$$ \int_{\s^{\kappa n-1}} \|\theta\|_D^{-\kappa} v(\theta)	d\theta= \frac{(2\pi)^{\kappa}}{|S^{\kappa-1}|} \int_{\s^{\kappa n-1}} \RT^{\kappa}v(\xi) d\mu(\xi) \, .$$
Since infinitely-smooth functions on the sphere are dense in the space of continuous functions on the sphere, the latter equation holds for all $v\in C_{\kappa}(\s^{\kappa n - 1})$, which implies that $D$ is an intersection body in $\K^n$.
\end{proof}

%%%%%%%%%%%%%%%%%%%%%%%%%%%%%%%%%%%%%%%%%%%%%%%%%%%%%%%%%%%%%%%%%%%%%%%%%%%%%%%%%%%%%%%%%%%%%%%%%%%%%%%%%%%%%%%%%%%%%%%%%%%%%%%%%%%
%%%%%%%%%%%%%%%%%\section{Characterization of intersection bodies in $\K^n$}%%%%%%%%%%%%%%%%%%%%%%%%%%%%%%%%%%%%%%%%%%%%%%%%%%%%%%%
%%%%%%%%%%%%%%%%%%%%%%%%%%%%%%%%%%%%%%%%%%%%%%%%%%%%%%%%%%%%%%%%%%%%%%%%%%%%%%%%%%%%%%%%%%%%%%%%%%%%%%%%%%%%%%%%%%%%%%%%%%%%%%%%%%%
\section{Geometric Characterizations of intersection bodies in $\K^n$}\label{section_cibinkn}
Intersection bodies in $\K^n$ are related to two generalizations of real intersection bodies. Consequently they inherit many of their properties. 

One generalization, $k$-intersection bodies, was introduced by A. Koldobsky in \cite{K2, Koldobsky2000} as follows. Let $M, L$ be star bodies in $\R^n$ and let $k$ be an integer, $0<k<n$. We say that $M$ is a \text{\it $k$-intersection body of $L$} if for every $(n-k)$-dimensional subspace $H$ of $\R^n$
$$ |M\cap H^{\perp}| = |L\cap H| \, .$$ 
A more general class of $k$-intersection bodies was defined in \cite{Koldobsky2000} as follows.
\begin{df}
Let $0<k<n$. We say that an origin-symmetric star body $M$ in $\R^n$ is a $k$-intersection body if there exists a measure $\mu$ on $\s^{n-1}$ such that for every test function $\varphi$ in $\R^n$
\begin{equation*}
	\int_{\R^n} \|x\|_M^{-k} \varphi(x) dx = \int_{\s^{n-1}} \left( \int_0^{\infty} t^{k-1} \hat{\varphi}(t\xi) dt \right) d\mu(\xi) \,.
\end{equation*} 
\end{df} 
Equivalently, $k$-intersection bodies can be viewed as limits in the radial metric of $k$-intersection bodies of star bodies, see \cite{milman2006, rubin2008}. They are related to a certain generalization of the Busemann-Petty problem in the same way as intersection bodies are related to the original problem, see Section 5.2 in \cite{Koldobsky2005}. 

An origin-symmetric star body $K$ in $\R^n$ is a $k$-intersection body if and only if $\|\cdot\|_K^{-k}$ represents a positive definite distribution, see \cite{Koldobsky2000}. Thus Theorem \ref{th_kibpdd} implies,
\begin{co}\label{co_ibkib}
An origin-symmetric star body in $\K^n$ is an intersection body in $\K^n$ if and only if it is a $\kappa$-balanced $\kappa$-intersection body in  $\R^{\kappa n}$. 
\end{co}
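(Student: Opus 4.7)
The plan is to deduce this corollary by directly matching two Fourier analytic characterizations that are already in hand. Namely, Theorem \ref{th_kibpdd} above says that an origin-symmetric star body $D$ in $\K^n$ is an intersection body in $\K^n$ exactly when $\|\cdot\|_D^{-\kappa}$ represents a positive definite distribution on $\R^{\kappa n}$; and Koldobsky's theorem from \cite{Koldobsky2000}, stated in this section, says that an origin-symmetric star body in $\R^{\kappa n}$ is a $\kappa$-intersection body exactly when $\|\cdot\|_D^{-\kappa}$ represents a positive definite distribution on $\R^{\kappa n}$. So the two properties are described by the same condition on $\|\cdot\|_D^{-\kappa}$.

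Concretely, I would proceed as follows. First, observe that by our conventions a star body in $\K^n$ is by definition an origin-symmetric $\kappa$-balanced star body in $\R^{\kappa n}$, so the standing assumption of being $\kappa$-balanced is automatic on both sides of the equivalence. Next, for the forward direction, assume $D$ is an intersection body in $\K^n$; apply Theorem \ref{th_kibpdd} to conclude that $\|\cdot\|_D^{-\kappa}$ is a positive definite distribution on $\R^{\kappa n}$; then invoke Koldobsky's characterization of $k$-intersection bodies (with $n$ replaced by $\kappa n$ and $k=\kappa$) to conclude that $D$ is a $\kappa$-intersection body in $\R^{\kappa n}$, which together with $\kappa$-balancedness gives what is needed. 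For the reverse direction, start with $D$ a $\kappa$-balanced $\kappa$-intersection body in $\R^{\kappa n}$; apply Koldobsky's characterization to obtain that $\|\cdot\|_D^{-\kappa}$ is positive definite on $\R^{\kappa n}$; then apply Theorem \ref{th_kibpdd} in the other direction (using $\kappa$-balancedness to view $D$ as a star body in $\K^n$) to conclude that $D$ is an intersection body in $\K^n$.

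There is essentially no obstacle here: once Theorem \ref{th_kibpdd} is established, the corollary is an immediate comparison of two distributional positive-definiteness conditions, the only subtle point being the bookkeeping that ``star body in $\K^n$'' is the same data as ``$\kappa$-balanced star body in $\R^{\kappa n}$,'' so no additional symmetrization of the representing measure is required. Accordingly the write-up should be short, referring to Theorem \ref{th_kibpdd} and to \cite{Koldobsky2000}, and noting that the exponent $k$ in the definition of $k$-intersection bodies is specialized to $k=\kappa$, which lies in the allowed range $0<\kappa<\kappa n$ (using $n\geq 2$, as is standing in the paper).
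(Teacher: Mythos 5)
Your argument is exactly the paper's: the corollary is obtained by matching the Fourier analytic characterization of intersection bodies in $\K^n$ from Theorem \ref{th_kibpdd} with Koldobsky's characterization of $k$-intersection bodies via positive definiteness of $\|\cdot\|_K^{-k}$, specialized to $k=\kappa$ in $\R^{\kappa n}$. The paper states this comparison in one line, and your added bookkeeping about $\kappa$-balancedness is correct and harmless.
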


Our next goal is   to determine when an origin-symmetric convex body in $\K^n$ is an intersection body. The answer to this question essentially follows from the so-called second derivative test, developed by A. Koldobsky, and Brunn's Theorem. For the convenience of the reader we recall related results and give necessary details. 

The concept of an embedding in $L_{p}$ was extended to negative values of $p$ in \cite{MR1703694}. Let $K$ be an origin-symmetric star body in $\R^n$. For $0<p<n$, we say \text{\it $(\R^n, \|\cdot\|_K)$ embeds in $L_{-p}$} if there exists a finite Borel measure $\mu$ on $\s^{n-1}$ so that for every test function $\phi$
$$ \int_{\R^n} \|x\|^{-p}_K \phi(x) dx = \int_{\s^{n-1}} \left( \int_{\R} |t|^{p-1} \hat{\phi}(t\theta) dt \right) d\mu(\theta) \, .$$
Note that an origin-symmetric star body $K$ in $\R^n$ is a $k$-intersection body if and only if the space $(\R^n, \|\cdot\|_K)$ embeds in $L_{-k}$. Here we only use embeddings in $L_{-p}$ to state results in a continuous form, for applications of this concept see Chapter 6 in \cite{Koldobsky2005}. Embeddings in $L_{-p}$ also admit a Fourier analytic characterization: The space $(\R^n, \|\cdot\|_K)$ embeds in $L_{-p}$ if and only if $\|\cdot\|_K^{-p}$ represents a positive definite distribution on $\R^n$.

It was proved in \cite{MR1694767} that every $n$-dimensional normed space embeds in $L_{-p}$ for each $p\in [n-3,n)$. In particular, every origin-symmetric convex body in $\R^n$ is a $k$-intersection body for $k=n-3, n-2, n-1$. 

Recall that for normed spaces $X, Y$ and $q\in \R$, $q\geq 1$, the $q$-sum $(X \oplus Y)_q$ of $X$ and $Y$ is defined as the space of pairs $\{(x,y) \: x\in X, y\in Y \}$ with the norm 
$$ \|(x,y)\| = (\|x\|_X^q + \|y\|_Y^q)^{1/q} .$$
The following result is an application of the second derivative test for embeddings in $L_{-p}$ and $k$-intersection bodies, which was first proved in \cite{MR1623670} and then generalized in \cite{Koldobsky2005}.

\begin{pr}\textnormal{(\cite{KoldobskyKonigZymonopoulou2008}, Proposition 3)}\label{pr_sdt}
Let $n\geq 3$ and let $Y$ be an $n$-dimensional normed space. For $q>2$, the $q$-sum of $\R$ and $Y$ does not embed in $L_{-p}$ with $0<p<n-2$. In particular, the unit ball of this direct sum is not a $k$-intersection body for any $1 \leq k < n-2$. 
\end{pr}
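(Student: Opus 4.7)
The plan is to argue by contradiction using Koldobsky's second derivative test. Suppose $X := (\R \oplus Y)_q$ with $q > 2$ embeds in $L_{-p}$ for some $0 < p < n - 2$. By the Fourier-analytic characterization of embeddings in $L_{-p}$ recalled just above the statement, this is equivalent to $\|\cdot\|_X^{-p}$ being a positive definite distribution on $\R^{n+1}$. Writing coordinates as $(t, y) \in \R \times \R^n$, one has $\|(t, y)\|_X^{-p} = (|t|^q + \|y\|_Y^q)^{-p/q}$.

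The pivotal regularity fact is that, for each fixed $y \neq 0$, the map $t \mapsto (|t|^q + \|y\|_Y^q)^{-p/q}$ is $C^2$ at $t = 0$ with vanishing second derivative there, since its leading non-constant $t$-dependence is of order $|t|^q$ and $q > 2$. To turn this vanishing into a Fourier-analytic obstruction I would use the Gamma-function representation
\begin{equation*}
(|t|^q + \|y\|_Y^q)^{-p/q} = \frac{1}{\Gamma(p/q)}\int_0^\infty \lambda^{p/q-1}\, e^{-\lambda|t|^q}\, e^{-\lambda\|y\|_Y^q}\, d\lambda,
\end{equation*}
which fully separates the $t$- and $y$-variables. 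Pairing positive definiteness of $\|\cdot\|_X^{-p}$ against an even product test function $\phi(t,y) = U(t)V(y)$ with $\widehat{\phi} \geq 0$ (equivalently, $U$ and $V$ both positive definite) and interchanging the order of integration reduces the required sign condition to
\begin{equation*}
\int_0^\infty \lambda^{p/q-1} \biggl(\int_\R e^{-\lambda|t|^q} U(t)\, dt\biggr)\biggl(\int_{\R^n} e^{-\lambda\|y\|_Y^q} V(y)\, dy\biggr) d\lambda \geq 0.
\end{equation*}

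The crucial input is the classical fact that for $q > 2$ the function $t \mapsto e^{-\lambda|t|^q}$ is not positive definite on $\R$, so one may choose a positive definite $U$ for which the inner $t$-integral takes strictly negative values on a non-trivial range of $\lambda$. Fixing a positive definite $V \geq 0$ (say Gaussian) guarantees that the $y$-integral is a strictly positive smooth function of $\lambda$ with prescribed scaling. The main obstacle, and the precise location where the bound $p < n - 2$ enters, is the quantitative tuning required to make the negative $\lambda$-interval dominate the whole integral: one must balance the Fourier support of $\hat U$, the concentration scale of $V$, and the dimension-dependent decay of the $y$-factor against the weight $\lambda^{p/q-1}$, and the range $0 < p < n - 2$ is what provides sufficient room for both integrability at the endpoints and domination by the negative contribution. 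The second assertion is then immediate: an origin-symmetric star body $M$ is a $k$-intersection body iff $\|\cdot\|_M^{-k}$ is positive definite, so the failure of $X$ to embed in $L_{-k}$ for $1 \leq k < n-2$ is exactly the failure of $B_X$ to be a $k$-intersection body in that range.
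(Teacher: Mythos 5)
The paper does not prove this proposition itself; it cites it from Koldobsky--K\"onig--Zymonopoulou, and characterizes the cited proof as an application of Koldobsky's second derivative test: one computes the parallel section function of $K = B_{(\R\oplus Y)_q}$ in the direction $e_1$, namely $A_{K,e_1}(t)= |B_Y| (1-|t|^q)^{n/q}$, observes that for $q>2$ it is twice differentiable at $0$ with $A_{K,e_1}''(0)=0$, and then feeds this into the fractional-derivative formula (Proposition~\ref{pr_psf} here, with $k=1$) relating $\langle |u|^{-s-1}/\Gamma(-s/2), A_{K,e_1}\rangle$ to $(\|\cdot\|_K^{-p})^\wedge(e_1)$ with $s=n-p$. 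For $p<n-2$, i.e.\ $s>2$, the sign of $\Gamma(-s/2)$ flips relative to the range $0<s<2$, while the regularized integral $\int |u|^{-s-1}(A(u)-A(0))\,du$ stays strictly negative because $A$ attains its max at $0$; hence $(\|\cdot\|_K^{-p})^\wedge(e_1)<0$. (One then lifts from the interval $s\in(2,\min(4,q))$ to all of $s\in(2,n)$ via monotonicity of the embedding index.)

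Your plan announces the second derivative test but then abandons it in favor of a subordination / Gamma-integral decomposition of $(|t|^q+\|y\|_Y^q)^{-p/q}$ paired against a product test function. That decomposition is a nice idea, and the individual ingredients (the equivalence of embedding in $L_{-p}$ with positive definiteness, the non--positive-definiteness of $e^{-\lambda|t|^q}$ for $q>2$) are correct. But the argument has a genuine gap exactly where it should bite: nothing in the integrability analysis or the scaling of the $t$- and $y$-factors pins the threshold at $p=n-2$. As you note, $\int e^{-\lambda|t|^q}U(t)\,dt$ becomes positive (of order $\lambda^{-1/q}U(0)$) for $\lambda$ large relative to the scale of $U$, so there is always a positive tail in the $\lambda$-integral, and the stated integrability constraints only enforce $0<p<n+1$. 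Yet the result is false for $p\in[n-2,n+1)$: by Theorem~\ref{th_kib} (equivalently \cite{MR1694767}), every $(n+1)$-dimensional normed space embeds in $L_{-p}$ there, so the $q$-sum does too. Your sketch therefore cannot be right as written, because the ``quantitative tuning'' you invoke would have to produce a negative total for those $p$ as well, which is impossible. The role of $p<n-2$ is not to provide room for integrability or domination; it is to put the order $s=n-p$ of the fractional derivative above $2$, where $\Gamma(-s/2)$ changes sign and the vanishing of $A_{K,e_1}''(0)$ (which is the genuine use of $q>2$) makes the regularized integral come out with the wrong sign for positive definiteness. That mechanism is absent from your proposal, and without it the argument does not close.

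The final paragraph, identifying $k$-intersection bodies with positive definiteness of $\|\cdot\|_M^{-k}$, is correct and immediate.
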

Next, we give an example of an origin-symmetric $\kappa$-balanced convex body in $\R^{\kappa n}$ that is not a $k$-intersection body for any $1 \leq k < \kappa(n-1)-2$. Denote by $\|\cdot\|_{q, \kappa}$, $q\geq 1$, the following norm on $\R^{\kappa n}$
$$ \|x\|_{q, \kappa} = \left( (x_1^2+\cdots +x_{\kappa}^2)^{q/2}+ \cdots +(x_{\kappa(n-1)+1}^2+\cdots +x_{\kappa n}^2)^{q/2} \right)^{1/q} \, ,$$
and by $B_{q, \kappa}$ the unit ball of the space $(\R^{\kappa n}, \|\cdot\|_{q, \kappa})$. Observe that the body $B_{q, \kappa}$ is $\kappa$-balanced.

\begin{pr}\label{pr_nkib}
For $q>2$, the space $(\R^{\kappa n}, \|\cdot\|_{q, \kappa} )$ does not embed in $L_{-p}$ for $0<p<\kappa(n-1)-2$. In particular, the body $B_{q, \kappa}$ is not a $k$-intersection body for any $1 \leq k < \kappa(n-1)-2$. 
\end{pr}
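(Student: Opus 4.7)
The plan is to exhibit a low-codimension subspace of $(\R^{\kappa n}, \|\cdot\|_{q,\kappa})$ of the form $(\R \oplus Y)_q$ and then invoke Proposition \ref{pr_sdt} together with the standard fact that embedding in $L_{-p}$ passes to subspaces. First, I set all but the initial coordinate of the first $\kappa$-block equal to zero: let
$$H = \spann\{e_1, e_{\kappa+1}, e_{\kappa+2}, \dots, e_{\kappa n}\},$$
a subspace of dimension $1 + \kappa(n-1)$. For $x \in H$, writing $y = (x_{\kappa+1}, \dots, x_{\kappa n}) \in \R^{\kappa(n-1)}$, the definition of $\|\cdot\|_{q,\kappa}$ immediately yields
$$\|x\|_{q,\kappa}^q = |x_1|^q + \|y\|_{q,\kappa}^q,$$
so that $(H, \|\cdot\|_{q,\kappa})$ is linearly isometric to $(\R \oplus Y)_q$ with $Y = (\R^{\kappa(n-1)}, \|\cdot\|_{q,\kappa})$.

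Next I would apply Proposition \ref{pr_sdt} to this subspace. The conclusion of the present proposition is vacuous unless $\kappa(n-1) \geq 3$, which is exactly the hypothesis $\dim Y \geq 3$ required by Proposition \ref{pr_sdt}. That result then yields that $(\R \oplus Y)_q$ does not embed in $L_{-p}$ for any $0 < p < \kappa(n-1) - 2$. Since embedding in $L_{-p}$ is hereditary under passing to subspaces (a standard consequence of the Fourier-analytic characterization recalled earlier: positive definiteness of $\|\cdot\|_X^{-p}$ restricts to any subspace, as one verifies by testing against test functions tensored with a Gaussian in the orthogonal direction, see \cite{Koldobsky2005}), a hypothetical embedding of $(\R^{\kappa n}, \|\cdot\|_{q,\kappa})$ into $L_{-p}$ would restrict to one of $H$, contradicting the previous sentence. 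This gives the main assertion.

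For the second assertion I would specialize to integer $p=k$ and apply the equivalence, recalled just before Proposition \ref{pr_sdt}, between a body being a $k$-intersection body and the corresponding normed space embedding in $L_{-k}$; this at once shows that $B_{q,\kappa}$ is not a $k$-intersection body for any $1 \leq k < \kappa(n-1)-2$. The only non-trivial input beyond the one-line subspace identification is the heredity of $L_{-p}$-embeddings, which is where I expect a reader unfamiliar with Koldobsky's framework to want an external reference; the remainder is linear-algebraic bookkeeping and a direct appeal to Proposition \ref{pr_sdt}.
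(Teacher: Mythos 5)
Your argument is correct and takes the same route as the paper: exhibit the subspace as the $q$-sum $(\R \oplus Y)_q$ with $Y = (\R^{\kappa(n-1)}, \|\cdot\|_{q,\kappa})$, apply Proposition~\ref{pr_sdt}, and pass back to the ambient space via heredity of $L_{-p}$-embeddings under subspaces. The only cosmetic difference is that the paper cites E.~Milman's Proposition~3.17 in \cite{milman2006} for the heredity step, whereas you sketch the Gaussian-tensoring argument directly.
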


\begin{proof}
The space $(\R^{\kappa n}, \|\cdot\|_{q, \kappa})$ contains as a subspace the $q$-sum of $\R$ and a $\kappa (n-1)$-dimensional space $(\R^{\kappa (n-1)}, \|\cdot\|_{q, \kappa})$. By Proposition \ref{pr_sdt}, this $q$-sum does not embed in $L_{-p}$ with $0<p<\kappa(n-1)-2$. 
The larger space cannot embed in $L_{-p}$ with $0<p<\kappa(n-1)-2$ either, by a result of E. Milman, see Proposition 3.17 in \cite{milman2006}. 
\end{proof}

In Theorem \ref{th_kib} below, we will show that for an origin-symmetric $\kappa$-balanced convex body $K$ in $\R^{\kappa n}$ the space $(\R^{\kappa n}, \|\cdot\|_K)$ embeds in $L_{-p}$ for every $p>0$, $\kappa(n-1)-2 \leq p \leq \kappa (n-1)$. Its proof will invoke the so-called parallel section function. 

Let $0<k<n$ and let $H$ be an $(n-k)$-dimensional subspace of $\R^n$. Fix an orthonormal basis $e_1, \dots, e_k$ in the orthogonal subspace $H^{\perp}$. For a star body $K$ in $\Rn$, define the \textit{$(n-k)$-dimensional parallel section function $A_{K,H}$} as a function on $\R^k$ such that for $u\in \R^k$
\begin{align*}
	A_{K,H}(u)	&= \Vol_{n-k}(K\cap\{H+u_1 e_1 + \cdots + u_k e_k\}) \\
       				&= \int_{\{ x\in \Rn\: (x,e_1)=u_1, \dots , (x,e_k)=u_k \}} \chi (\|x\|_K) dx\, ,
\end{align*}
where $\chi$ is the indicator function of the interval $[0,1]$. 

For every $q\in \C$, the value of the distribution $|u|_2^{-q-k}$ on a test function $\phi \in \sw(\R^k)$ is defined in the usual way, see p. 71 in \cite{GelfandShilov1964}, and represents an entire function in $q\in \C$. If $K$ is infinitely smooth, the function $A_{K,H}$ is infinitely differentiable at the origin, see Lemma 2.4 in \cite{Koldobsky2005}, and the same regularization procedure can be applied to define the action of these distributions on the function $A_{K,H}$, see \cite{Koldobsky2005} and p. 356-359 in \cite{KoldobskyKonigZymonopoulou2008} for more details. 

The following proposition was first proved in \cite{Koldobsky2000}, we formulate it in the form as it appears in \cite{KoldobskyKonigZymonopoulou2008}:

\begin{pr}\textnormal{(\cite{KoldobskyKonigZymonopoulou2008}, Proposition 4)}\label{pr_psf}
Let $K$ be an infinitely smooth origin-symmetric star body in $\Rn$ and $0<k<n$. Then for every $(n-k)$-dimensional subspace $H$ of $\Rn$ and any $q \in \R, -k < q < n-k$,
\begin{equation}
\left\langle \frac{|u|_2^{-q-k}}{\Gamma(-q/2)}, A_{K,H}(u) \right\rangle = \frac{2^{-q-k} \pi^{-k/2}}{\Gamma((q+k)/2)(n-q-k)} \int\limits_{\s^{n-1}\cap H^{\perp}} (\|\cdot\|_K^{-n+q+k})^{\wedge}(\theta) d\theta \, . \nonumber
\end{equation} 
Also for every $m\in \N\cup\{0\}$, $m<(n-k)/2$,
$$ \Delta^m A_{K,H}(0) = \frac{(-1)^m}{(2\pi)^k(n-2m-k)} \int_{\sn\cap H^{\perp}} (\|\cdot\|_K^{-n+2m+k})^{\wedge}(\xi) d\xi \, ,$$
where $\Delta$ denotes the Laplacian on $\R^k$.
\end{pr}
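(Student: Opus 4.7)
The plan is to establish the general identity first and then recover the Laplacian formula by specializing at $q=2m$ via analytic continuation. Identify $H^{\perp}$ with $\R^{k}$ through the basis $e_1,\dots,e_k$, so that $A_{K,H}(v)=\int_H \mathbf{1}_K(y+v)\,dy$ for $v\in H^{\perp}$. A Fubini computation (using $(y,\xi)=0$ whenever $y\in H$ and $\xi\in H^{\perp}$) gives
$$\widehat{A_{K,H}}(\xi)=\widehat{\mathbf{1}_K}(\xi)\quad\text{for } \xi\in H^{\perp}.$$
Treating $|u|_2^{-q-k}$ and $A_{K,H}$ as even tempered distributions on $H^{\perp}$ and invoking Parseval on $\R^{k}$, together with the classical evaluation
$$(|u|_2^{-q-k})^{\wedge}=\frac{2^{-q}\pi^{k/2}\Gamma(-q/2)}{\Gamma((q+k)/2)}\,|v|_2^{q},$$
reduces the left-hand side of the stated formula, after dividing by $\Gamma(-q/2)$ and passing to polar coordinates on $H^{\perp}$, to
$$\frac{2^{-q-k}\pi^{-k/2}}{\Gamma((q+k)/2)}\int_{\sn\cap H^{\perp}}\int_0^{\infty} r^{q+k-1}\,\widehat{\mathbf{1}_K}(r\theta)\,dr\,d\theta.$$

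The linchpin is the identity
$$\int_0^{\infty} r^{q+k-1}\,\widehat{\mathbf{1}_K}(r\theta)\,dr=\frac{1}{n-q-k}\bigl(\|\cdot\|_K^{-n+q+k}\bigr)^{\wedge}(\theta),$$
which I would prove via the layer-cake representation
$$\|x\|_K^{-n+q+k}=(n-q-k)\int_0^{\infty} t^{q+k-n-1}\mathbf{1}_{tK}(x)\,dt,$$
valid for $-k<q<n-k$, by taking Fourier transforms in $x$ and applying the scaling $\widehat{\mathbf{1}_{tK}}(\xi)=t^{n}\widehat{\mathbf{1}_K}(t\xi)$ under the integral. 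Inserting this into the previous display yields the first formula with exactly the stated constants.

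For the Laplacian identity I would specialize at $q=2m$. The distribution $|u|_2^{-q-k}/\Gamma(-q/2)$ is holomorphic in $q$ near $2m$, because the simple pole of the Riesz factor there is cancelled by $\mathrm{Res}_{z=-m}\Gamma(z)=(-1)^m/m!$. Expanding the spherical mean $M\phi(r)=\sum_{j}\Gamma(k/2)\Delta^{j}\phi(0)\,r^{2j}/[4^{j}j!\,\Gamma(k/2+j)]$ gives the standard regularization identity
$$\lim_{q\to 2m}\left\langle\frac{|u|_2^{-q-k}}{\Gamma(-q/2)},\phi\right\rangle=\frac{(-1)^{m}\pi^{k/2}}{4^{m}\,\Gamma(m+k/2)}\,\Delta^{m}\phi(0).$$
Applying this with $\phi=A_{K,H}$ (smooth near $0$ by the hypothesis that $K$ is infinitely smooth) and matching constants against the right-hand side of the first formula at $q=2m$ produces exactly the prefactor $(-1)^{m}/[(2\pi)^{k}(n-2m-k)]$.

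The main technical obstacle is that the Parseval step, the layer-cake rewrite, and the polar expansion are only literally true when the underlying integrals converge absolutely. I would execute all of them first in the subrange $-k<q<0$, where $\widehat{\mathbf{1}_K}(r\theta)$ has enough radial decay, and then extend the resulting identity to the whole range $-k<q<n-k$ and to $q=2m$ by analytic continuation, using that every quantity in sight depends meromorphically on $q$ with matching singular behaviour.
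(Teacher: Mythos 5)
The paper does not prove Proposition~\ref{pr_psf}; it is imported verbatim from \cite{KoldobskyKonigZymonopoulou2008}, so there is no in-paper argument to compare against. Your proof is, however, correct, and it takes a genuinely different route from the one standard in the Koldobsky school. You identify $A_{K,H}$ with a section of $\mathbf{1}_K$, invoke the central-slice theorem $\widehat{A_{K,H}}=\widehat{\mathbf{1}_K}\big|_{H^\perp}$, apply Parseval on $H^\perp\cong\R^k$ together with the Riesz kernel formula $(|u|_2^{-q-k})^{\wedge}=\frac{2^{-q}\pi^{k/2}\Gamma(-q/2)}{\Gamma((q+k)/2)}|v|_2^{q}$, and then convert the radial integral $\int_0^\infty r^{q+k-1}\widehat{\mathbf{1}_K}(r\theta)\,dr$ into $\frac{1}{n-q-k}(\|\cdot\|_K^{-n+q+k})^{\wedge}(\theta)$ by taking Fourier transforms in the layer-cake representation $\|x\|_K^{-n+q+k}=(n-q-k)\int_0^\infty t^{q+k-n-1}\mathbf{1}_{tK}(x)\,dt$. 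I checked all the constants: the $\frac{1}{(2\pi)^k}$ from Parseval combines with $2^{-q}\pi^{k/2}$ to give $2^{-q-k}\pi^{-k/2}$, and specialization at $q=2m$ via $\frac{|u|_2^{-q-k}}{\Gamma(-q/2)}\big|_{q=2m}=\frac{(-1)^m\pi^{k/2}}{4^m\Gamma(m+k/2)}\Delta^m\delta_0$ produces exactly the prefactor $\frac{(-1)^m}{(2\pi)^k(n-2m-k)}$. The original proof (as in \cite{Koldobsky2005}, Section 3, and \cite{KoldobskyKonigZymonopoulou2008}) instead writes $A_{K,H}(u)=\int_H\chi(\|(y,u)\|_K)\,dy$, Fubinis the Riesz pairing into an $\R^n$-integral $\int_{\R^n}|\mathrm{proj}_{H^\perp}x|^{-q-k}\chi(\|x\|_K)\,dx$, converts to polar coordinates, and only then brings in the Fourier transform of $\|\cdot\|_K^{-n+q+k}$; your route avoids the explicit projection weight $|\mathrm{proj}_{H^\perp}\theta|^{-q-k}$ at the cost of reasoning about the decay of $\widehat{\mathbf{1}_K}$ and $\widehat{A_{K,H}}$. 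On the technical front, your Parseval step and the interchange of integration with the layer-cake representation require absolute convergence, which fails for most of the range $-k<q<n-k$; it does hold on a small interval such as $q\in(-k,\,1-k)$ (van der Corput gives at least $O(r^{-1})$ decay of $\widehat{\mathbf{1}_K}$ for a smooth star body), and both sides are meromorphic in $q$ with matching poles, so your closing analytic-continuation paragraph does close the gap. This caveat deserves one more sentence of justification if written up formally, but the proof is sound.
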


\noindent
We shall also need the following generalization of Brunn's theorem.
\begin{lemma}\textnormal{(\cite{KoldobskyKonigZymonopoulou2008}, Lemma 1)}\label{lemma_brunn}
For a $2$-smooth origin-symmetric convex body $K$ in $\R^n$ the function $A_{K,H}$ is twice differentiable at the origin and 
$$  \Delta A_{K,H}(0) \leq 0 \, .$$
Besides, for any $q\in (0,2)$,
$$ \left\langle \frac{|u|_2^{-q-k}}{\Gamma(-q/2)}, A_{K,H}(u) \right\rangle  \geq 0 \, .$$
\end{lemma}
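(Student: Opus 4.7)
The plan is to combine Brunn's concavity principle with the standard analytic continuation of the distribution $|u|_2^{-q-k}/\Gamma(-q/2)$ in the parameter $q$. Brunn's theorem tells us that $u \mapsto A_{K,H}(u)^{1/(n-k)}$ is concave on its support in $\R^k$, and origin-symmetry of $K$ forces $A_{K,H}$ to be even; together these give the pointwise bound
\begin{equation*}
A_{K,H}(u) \leq A_{K,H}(0) \qquad \text{for all } u \in \R^k.
\end{equation*}
The $2$-smoothness of $K$ guarantees, via Lemma 2.4 of \cite{Koldobsky2005}, that $A_{K,H}$ is $C^2$ in a neighborhood of the origin.

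The first assertion of the lemma follows immediately. Twice differentiability at $0$ is part of the cited regularity statement, and since $0$ is a maximum of a $C^2$ function the Hessian $D^2 A_{K,H}(0)$ is negative semidefinite; taking its trace gives $\Delta A_{K,H}(0) \leq 0$.

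For the second assertion I would identify the meromorphic extension of $\langle |u|_2^{-q-k}, A_{K,H}\rangle$ to $q \in (0,2)$ with the convergent subtracted integral
\begin{equation*}
\int_{\R^k} \frac{A_{K,H}(u) - A_{K,H}(0)}{|u|_2^{q+k}}\, du.
\end{equation*}
Integrability holds at both ends: near the origin, evenness together with the $C^2$ regularity gives $A_{K,H}(u) - A_{K,H}(0) = O(|u|_2^2)$, so the integrand is $O(|u|_2^{2-q-k})$, integrable because $q<2$; at infinity the compact support of $A_{K,H}$ leaves a tail bounded by $A_{K,H}(0)/|u|_2^{q+k}$, integrable because $q>0$. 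The numerator is non-positive everywhere by Brunn plus symmetry. Since $q \in (0,2)$ forces $-q/2 \in (-1,0)$ and hence $\Gamma(-q/2)<0$, dividing the non-positive integral by the negative $\Gamma(-q/2)$ produces the desired non-negativity.

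The main technical hurdle is the identification of this regularized integral with the distributional pairing $\langle |u|_2^{-q-k}/\Gamma(-q/2), A_{K,H} \rangle$. This follows from standard Gelfand-Shilov machinery: both sides define meromorphic functions of $q$, they agree in the region where $|u|_2^{-q-k}$ is classically locally integrable (after pairing with the compactly supported $A_{K,H}$), and the division by $\Gamma(-q/2)$ is precisely what cancels the potential poles at $q \in \{0,-2,-4,\dots\}$, so analytic extension across $q=0$ into $(0,2)$ is legitimate. Once this is in place, the sign-tracking above completes the proof.
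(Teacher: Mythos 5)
The paper states this lemma as a citation to Lemma 1 of \cite{KoldobskyKonigZymonopoulou2008} and gives no proof of its own, so there is no argument in this paper for you to match. Your reconstruction is, however, correct and coincides with the standard argument in the cited source: Brunn's concavity principle plus central symmetry give $A_{K,H}(u)\leq A_{K,H}(0)$, whence the Hessian at the global maximum $0$ is negative semidefinite; and the identification of the analytically continued pairing with the convergent subtracted integral $\int_{\R^k}\bigl(A_{K,H}(u)-A_{K,H}(0)\bigr)|u|_2^{-q-k}\,du$ on $(0,2)$ is exactly right, as one checks by splitting at $|u|=R$ and noting that the two boundary terms $\pm A_{K,H}(0)|\s^{k-1}|R^{-q}/q$ cancel. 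One small slip in the final paragraph: the poles of $|u|_2^{-q-k}$ (in $\R^k$, paired against an even function) sit at $q\in\{0,2,4,\dots\}$, not $\{0,-2,-4,\dots\}$; these are precisely the poles of $\Gamma(-q/2)$, so the quotient is still entire, and your conclusion is unaffected since you only work on $(0,2)$.
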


\begin{theorem}\label{th_kib}
Let $K$ be an origin-symmetric $\kappa$-balanced convex body in $\R^{\kappa n}$. The space $(\R^{\kappa n}, \|\cdot\|_K)$ embeds in $L_{-p}$ for every $p\in [\kappa(n-1)-2, \kappa (n-1)], p>0$. In particular, the body $K$ is a $k$-intersection body for $\kappa(n-1)-2 \leq k \leq \kappa (n-1) , k>0$. 
\end{theorem}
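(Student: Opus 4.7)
The plan is to establish positive definiteness of $\|\cdot\|_K^{-p}$ (equivalently, embedding of $(\R^{\kappa n},\|\cdot\|_K)$ in $L_{-p}$) by computing the Fourier transform pointwise via Proposition \ref{pr_psf} applied with $k=\kappa$ and $H=H_\xi$, and then extracting sign information from the Brunn-type inequalities in Lemma \ref{lemma_brunn}. First I would reduce to smooth $K$. A sequence of infinitely smooth origin-symmetric $\kappa$-balanced convex bodies $K_m\to K$ in the radial metric can be produced by convolving the support function of $K$ with a standard mollifier and then averaging over the subgroup $\{R_\sigma:\sigma\in\SO(\kappa)\}\subset\SO(\kappa n)$, which preserves both convexity and $\kappa$-balancedness. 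Since $0<p<\kappa n$, the functions $\|\cdot\|_{K_m}^{-p}$ are locally integrable and converge to $\|\cdot\|_K^{-p}$ in $\mathcal{S}'(\R^{\kappa n})$ by dominated convergence. Positive definite distributions are closed under $\mathcal{S}'$-limits, so it suffices to prove the theorem for smooth $K$.

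Assume now $K$ is infinitely smooth. Fix $\xi\in\s^{\kappa n-1}$ and apply Proposition \ref{pr_psf} in ambient dimension $\kappa n$ with $k=\kappa$ and $H=H_\xi$, for $q\in(-\kappa,\kappa(n-1))$:
$$ \left\langle \frac{|u|_2^{-q-\kappa}}{\Gamma(-q/2)},\, A_{K,H_\xi}(u)\right\rangle = c_{n,\kappa,q} \int_{\s^{\kappa n-1}\cap H_\xi^\perp} (\|\cdot\|_K^{-\kappa n + q + \kappa})^{\wedge}(\theta)\,d\theta, $$
with $c_{n,\kappa,q}>0$. Because $K$ is $\kappa$-balanced, the function $\|\cdot\|_K^{-\kappa n+q+\kappa}$ is $\kappa$-invariant, hence by Lemma \ref{lemma_ftc} its Fourier transform is constant on the $(\kappa-1)$-sphere $\s^{\kappa n-1}\cap H_\xi^\perp$. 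The integral therefore reduces to $|S^{\kappa-1}|(\|\cdot\|_K^{-\kappa n + q + \kappa})^{\wedge}(\xi)$, and the pointwise sign of this Fourier transform is governed by the sign of the regularized integral of $A_{K,H_\xi}$. Writing $p=\kappa n-\kappa-q$, the correspondence $q\in(0,2)\leftrightarrow p\in(\kappa(n-1)-2,\kappa(n-1))$ combined with Lemma \ref{lemma_brunn} gives $(\|\cdot\|_K^{-p})^{\wedge}(\xi)\geq 0$ for every such $\xi$. Since $\xi$ is arbitrary and, for smooth $K$, $(\|\cdot\|_K^{-p})^{\wedge}$ is a continuous function on $\R^{\kappa n}\setminus\{0\}$, it is a nonnegative distribution, yielding positive definiteness on the open interval.

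For the two endpoints I would use the second identity in Proposition \ref{pr_psf}. The case $m=0,\,k=\kappa$ reduces positivity at $p=\kappa(n-1)$ to $A_{K,H_\xi}(0)=|K\cap H_\xi|\geq 0$; the case $m=1,\,k=\kappa$ (permitted precisely when $p=\kappa(n-1)-2>0$) supplies an explicit factor $(-1)^1$ that converts the inequality $\Delta A_{K,H_\xi}(0)\leq 0$ of Lemma \ref{lemma_brunn} into nonnegativity of $(\|\cdot\|_K^{-\kappa(n-1)+2})^{\wedge}(\xi)$. The main obstacle is administrative rather than conceptual: one must verify carefully that the $\SO(\kappa)$-averaged mollification yields smooth $\kappa$-balanced convex approximations whose Minkowski-functional powers converge in $\mathcal{S}'$ (not merely pointwise), and one must keep track of the constants around the poles of $\Gamma(-q/2)$ at $q=0,2$ so that the sign produced by the analytic continuation is unambiguous. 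Everything else is a direct combination of the $\kappa$-invariance reduction (Lemma \ref{lemma_ftc}), the identity in Proposition \ref{pr_psf}, and the geometric input from Lemma \ref{lemma_brunn}.
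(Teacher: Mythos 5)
Your argument is essentially identical to the paper's: both reduce to infinitely smooth bodies, then apply Proposition \ref{pr_psf} in ambient dimension $\kappa n$ with $k=\kappa$ and $H=H_\xi$, use Lemma \ref{lemma_ftc} to collapse the integral over $\s^{\kappa n-1}\cap H_\xi^{\perp}$ to a single value at $\xi$, and invoke Lemma \ref{lemma_brunn} for the sign (with the first identity handling $q\in(0,2)$ and the second identity at $m=0$ and $m=1$ handling the endpoints $p=\kappa(n-1)$ and $p=\kappa(n-1)-2$). The only cosmetic difference is that the paper cites Lemma 4.10 of Koldobsky's book for the smooth approximation rather than spelling out a mollification-and-$\SO(\kappa)$-averaging construction.
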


\begin{proof}
It is enough to prove the result for infinitely smooth bodies by Lemma 4.10 in \cite{Koldobsky2005}. Fix $\xi \in \s^{\kappa n -1}$. Since $k$ is positive, this implies $\kappa > 2/(n-1)$. The second equation of Proposition \ref{pr_psf} with $H = H_{\xi}, m=1$, dimension $\kappa n$ instead of $n$, and Lemma \ref{lemma_ftc}, yield 
\begin{align*} 
\Delta A_{K,H_{\xi}}(0) 
	&= \frac{-1}{(2\pi)^{\kappa} (\kappa n- \kappa -2)} \int_{\s^{\kappa n -1}\cap H_{\xi}^{\perp}} (\|\cdot\|_K^{-\kappa n+ \kappa + 2})^{\wedge}(x) dx \\
	&= \frac{-|\s^{\kappa-1}|}{(2\pi)^{\kappa} (\kappa n- \kappa -2)} (\|\cdot\|_K^{-\kappa n+ \kappa + 2})^{\wedge}(\xi)  \, .
\end{align*}
By Lemma \ref{lemma_brunn}, it follows that $(\|\cdot\|_K^{-\kappa n+ \kappa + 2})^{\wedge}(\xi) \geq 0$ for every $\xi \in \s^{\kappa n - 1}$. Hence the distribution $\|\cdot\|_K^{-\kappa (n-1) + 2}$ is positive definite on $\R^{\kappa n}$ and consequently, the body $K$ is a $(\kappa (n-1) - 2)$-intersection body. 

Next, we apply the first equation of Proposition \ref{pr_psf} with $H = H_{\xi}, 0<q<2$, dimension $\kappa n$ instead of $n$, and Lemma \ref{lemma_ftc}, to obtain
$$ \left\langle \frac{|u|_2^{-q-\kappa}}{\Gamma(-q/2)}, A_{K,H_{\xi}}(u) \right\rangle = \frac{2^{-q-\kappa} \pi^{-\kappa/2} |\s^{\kappa-1}|}{\Gamma((q+\kappa)/2)(\kappa n-q-\kappa)}  (\|\cdot\|_K^{-\kappa n+q+\kappa})^{\wedge}(\xi) \, . $$
Using Lemma \ref{lemma_brunn} again, it follows that the space $(\R^{\kappa n}, \|\cdot\|_K)$ embeds in $L_{-\kappa n+q+\kappa}$. With the range of $0<q<2$, this means that every such space embeds in $L_{-p}$ for $p\in (\kappa(n-1)-2, \kappa n-\kappa)$.

Finally, using the second equation of Proposition \ref{pr_psf} as above, but with $m=0$, we obtain 
$$
A_{K,H_{\xi}}(0) = \frac{|\s^{\kappa-1}|}{(2\pi)^{\kappa} (\kappa n-\kappa)} (\|\cdot\|_K^{-\kappa n+ \kappa})^{\wedge}(\xi)  \, ,
$$
which implies that the body $K$ is a $\kappa (n-1)$-intersection body. 
\end{proof}

Let us summarize the above discussion: For an origin-symmetric $\kappa$-balanced convex body $K$ in $\R^{\kappa n}$ the space $(\R^{\kappa n}, \|\cdot\|_K)$ embeds in $L_{-p}$ for every $p>0$, $p\in [\kappa(n-1)-2, \kappa (n-1)]\cup [\kappa n -3, \kappa n)$. For $q>2$, the space $(\R^{\kappa n}, \|\cdot\|_{q, \kappa} )$ does not embed in $L_{-p}$ for $p\in (0, \kappa(n-1)-2)$. For the range $p\in (\kappa n - \kappa, \kappa n -3)$ and $\kappa \geq 4$, the question whether or not for every origin-symmetric $\kappa$-balanced convex body $K$ in $\R^{\kappa n}$ the space $(\R^{\kappa n}, \|\cdot\|_K)$ embeds in $L_{-p}$ remains open.

\begin{co}\label{co_ib}
The only cases where all origin-symmetric convex bodies in $\K^n$ are intersection bodies in $\K^n$ are the following: \\
(i) $n=2, \kappa\in \N$, (ii) $n=3, \kappa\leq 2$ and (iii) $n=4, \kappa =1$. 
\end{co}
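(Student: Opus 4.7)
The plan is to translate the question into the language of $\kappa$-intersection bodies via Corollary \ref{co_ibkib}: an origin-symmetric convex body $K$ in $\K^n$ is an intersection body in $\K^n$ if and only if $K$, viewed as a $\kappa$-balanced convex body in $\R^{\kappa n}$, is a $\kappa$-intersection body. So I need to determine exactly for which pairs $(n,\kappa)$ every origin-symmetric $\kappa$-balanced convex body in $\R^{\kappa n}$ is a $\kappa$-intersection body.

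First I would collect the positive results. Theorem \ref{th_kib} shows that every such body is a $k$-intersection body for every integer $k$ with $\kappa(n-1)-2 \leq k \leq \kappa(n-1)$ and $k>0$. In addition, the classical embedding result from \cite{MR1694767} recalled in the discussion above says every $m$-dimensional normed space embeds in $L_{-p}$ for $p\in[m-3,m)$, and setting $m=\kappa n$ this gives $k$-intersection for $k=\kappa n-3,\kappa n-2,\kappa n-1$. Specializing these two ranges to $k=\kappa$ and simplifying leads to the sufficient condition
\[ \kappa(n-2) \leq 2 \qquad \text{or} \qquad \kappa(n-1) \leq 3 \,. \]
A short case check shows that the pairs $(n,\kappa)$ satisfying at least one of these inequalities are precisely \textbf{(i)} $n=2$, $\kappa\in\N$; \textbf{(ii)} $n=3$, $\kappa\leq 2$; and \textbf{(iii)} $n=4$, $\kappa=1$.

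It remains to exhibit, in every other case, a $\kappa$-balanced origin-symmetric convex body that is \emph{not} a $\kappa$-intersection body. For this I would take $K=B_{q,\kappa}$ with $q>2$: by construction $B_{q,\kappa}$ is $\kappa$-balanced, and since $q\geq 1$ it is convex. In each of the excluded families (namely $n=3,\kappa\geq 3$; $n=4,\kappa\geq 2$; and $n\geq 5$, $\kappa\in\N$), one checks the elementary inequality $\kappa(n-2)>2$, which is equivalent to $\kappa<\kappa(n-1)-2$. Proposition \ref{pr_nkib} then yields that $B_{q,\kappa}$ is not a $\kappa$-intersection body, so by Corollary \ref{co_ibkib} it is not an intersection body in $\K^n$ either.

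The only subtlety is the case analysis of the two inequalities, together with the check that the three excluded families all satisfy $\kappa(n-2)>2$; this is bookkeeping rather than a genuine obstacle, and no further analytic machinery is needed beyond Theorem \ref{th_kib}, Proposition \ref{pr_nkib} and Corollary \ref{co_ibkib}.
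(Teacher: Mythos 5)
Your proof is essentially the paper's argument: reduce via Corollary \ref{co_ibkib} to $\kappa$-intersection bodies, use Theorem \ref{th_kib} to get the positive cases when $\kappa \in [\kappa(n-1)-2, \kappa(n-1)]$ (equivalently $\kappa(n-2)\leq 2$), and use Proposition \ref{pr_nkib} with $B_{q,\kappa}$, $q>2$, to exclude the remaining cases. The one extra ingredient you bring in --- the classical $L_{-p}$ embedding result giving $k$-intersection for $k\in\{\kappa n-3,\kappa n-2,\kappa n-1\}$ --- is correct but redundant: the set of $(n,\kappa)$ with $\kappa(n-1)\leq 3$ is already contained in the set with $\kappa(n-2)\leq 2$, so condition 2 adds nothing to the union, and the paper correctly omits it.
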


\begin{proof}
To obtain the result, we apply Theorem \ref{th_kib} and Proposition \ref{pr_nkib} for different values of $n$. For $n=2$, $\kappa \in [\kappa-2, \kappa]$ for any $\kappa \in \N$. For $n=3$, $\kappa \in [2\kappa-2, 2\kappa]$ only for $\kappa \leq 2$, and for $\kappa>2$, $B_{q,\kappa}$ with $q>2$ is not an intersection body in $\K^3$. For $n=4$, $\kappa \in [3\kappa-2, 3\kappa]$ only for $\kappa = 1$, and for $\kappa>1$, $B_{q,\kappa}$ with $q>2$ is not an intersection body in $\K^4$. For $n\geq 5$, $B^{\kappa n}_q$ with $q>2$ is not an intersection body in $\K^n$ for all $\kappa\in \N$. 
\end{proof}

Another generalization of intersection bodies was introduced by G. Zhang in \cite{Zhang1996} as follows. For $1\leq k \leq n-1$, let $G(n, n-k)$ be the Grassmanian of $(n-k)$-dimensional subspaces of $\R^n$. Recall that the $(n-k)$-dimensional spherical Radon transform, $\RT_{n-k}$, is an operator $\RT_{n-k} \: C(\s^{n-1}) \to C(G(n, n-k))$ defined by
$$ \RT_{n-k} f(H) = \int_{\s^{n-1}\cap H} f(x) dx \, ,$$
for $H\in G(n, n-k)$. Denote the image of the operator $\RT_{n-k}$ by $X$:
$$ \RT_{n-k}(C(\s^{n-1})) = X \subset C(G(n, n-k))\, . $$
Let $M^+(X)$ be the space of positive linear functionals on $X$, that is, for every $\nu \in M^+(X)$ and for every non-negative function $f\in X$, we have $\nu(f)\geq 0$.
\begin{df}
An origin-symmetric star body $K$ in $\R^n$ is called a generalized $k$-intersection body if there exists a functional $\nu\in M^+(X)$ so that for every $f \in C(\s^{n-1})$
$$ \int_{\s^{n-1}} \|x\|_K^{-k} f(x)dx = \nu( \RT_{n-k} f) \, . $$ 
\end{df}
The generalized $k$-intersection bodies are related to the lower-dimensional Busemann-Petty problem, see \cite{Zhang1996}.

P. Goodey and W. Weil proved in \cite{GoodeyWeil1995} that all intersection bodies in $\R^n$ can be obtained as the closure in the radial metric of radial sums of ellipsoids. This result was extended by E. Grinberg and G. Zhang to generalized $k$-intersection bodies with the radial sum replaced by the $k$-radial sum. E. Milman gave a different proof of the latter result in \cite{milman2006}. The complex version of this result was proved in \cite{MR3106735}. We now prove this result in $\K^n$ by adjusting the proof from \cite{KoldobskyYaskin2008} to our setting.

Define the radial sum of two star bodies $K,L$ in $\K^n$, $K +^{\K^n} L$, as a star body in $\K^n$ whose radial function satisfies
$$ \rho^{\kappa}_{K +^{\K^n} L} = \rho^{\kappa}_{K} + \rho^{\kappa}_{L} \, ,$$
or equivalently as
$$ \|\cdot\|^{-\kappa}_{K +^{\K^n} L} = \|\cdot\|^{-\kappa}_{K} + \|\cdot\|^{-\kappa}_{L} \, .$$
We will prove the following theorem in several steps.
\begin{theorem}\label{th_ibaloe}
Let $K$ be an origin-symmetric star body in $\K^n$. Then $K$ is an intersection body in $\K^n$ if and only if $\|\cdot\|^{-\kappa}_{K}$ is the limit, in the space $C_{\kappa}(\s^{\kappa n-1})$, of finite sums of the form
$$ \|\cdot\|^{-\kappa}_{E_1} + \cdots + \|\cdot\|^{-\kappa}_{E_m} \, ,$$
where $E_1, \dots, E_m$ are ellipsoids in $\K^n$.
\end{theorem}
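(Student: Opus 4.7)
The argument rests on the Fourier characterization of intersection bodies from Theorem \ref{th_kibpdd}: $K$ is an intersection body in $\K^n$ if and only if $\|\cdot\|_K^{-\kappa}$ is a positive definite distribution on $\R^{\kappa n}$. The forward direction then reduces to a short verification; the converse requires a Hahn-Banach separation argument on $C_{\kappa}(\s^{\kappa n-1})$.

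For the forward direction, I would first show that every $\kappa$-balanced ellipsoid $E$ in $\K^n$ is an intersection body in $\K^n$. Writing $\|x\|_E^2 = \langle A x, x\rangle$ for a positive-definite symmetric matrix $A$ commuting with every $R_\sigma$ (forced by the $\kappa$-balanced condition), the standard change-of-variables formula for the Fourier transform yields
$$(\|\cdot\|_E^{-\kappa})^{\wedge}(\xi) = c_\kappa (\det A)^{-1/2} \langle A^{-1}\xi,\xi\rangle^{-(\kappa n-\kappa)/2} > 0,$$
so Theorem \ref{th_kibpdd} applies. Positive definiteness is stable under positive linear combinations, hence the class of intersection bodies in $\K^n$ is closed under the radial sum $+^{\K^n}$. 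It is also closed under limits in $C_{\kappa}(\s^{\kappa n-1})$: if $\|\cdot\|_{K_j}^{-\kappa}\to\|\cdot\|_K^{-\kappa}$ uniformly on the sphere, dominated convergence on $\R^{\kappa n}$ yields distributional convergence of the $-\kappa$-homogeneous extensions, and a distributional limit of non-negative tempered distributions is non-negative. This handles the ``if'' direction.

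For the converse, let $\mathcal A \subset C_{\kappa}(\s^{\kappa n-1})$ denote the closed convex cone generated by $\|\cdot\|_E^{-\kappa}$ as $E$ ranges over $\kappa$-balanced ellipsoids in $\K^n$, and suppose toward a contradiction that some intersection body $K$ satisfies $\|\cdot\|_K^{-\kappa}\notin \mathcal A$. Hahn-Banach separation, together with Riesz representation and averaging over $\SO(\kappa)$, supplies a signed $\kappa$-invariant Borel measure $\nu$ on $\s^{\kappa n-1}$ with $\int\|\theta\|_E^{-\kappa}\,d\nu(\theta) \geq 0$ for every such $E$ while $\int\|\theta\|_K^{-\kappa}\,d\nu(\theta) < 0$. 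Let $\tilde\nu$ be the $(-\kappa n+\kappa)$-homogeneous $\kappa$-invariant distributional extension of $\nu$ to $\R^{\kappa n}$; a spherical Parseval identity combined with the Fourier computation above (which shows the Fourier transform bijectively maps $\|\cdot\|_E^{-\kappa}$ to a positive multiple of $\|\cdot\|_{E^*}^{-\kappa n+\kappa}$ for a $\kappa$-balanced ellipsoid $E^*$) reformulates the ellipsoid hypothesis as the non-negativity of $\widehat{\tilde\nu}$ paired against every $\|\cdot\|_{E^*}^{-\kappa n+\kappa}$. Via the integral representation
$$|Tx|_2^{-\kappa n+\kappa} = c \int_0^{\infty} t^{(\kappa n-\kappa)/2-1} e^{-t\langle Ax,x\rangle}\,dt,$$
this extends to non-negativity of $\widehat{\tilde\nu}$ paired against every $\kappa$-invariant Gaussian $e^{-\langle Ax,x\rangle}$. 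Since such Gaussians span a dense subspace of $\kappa$-invariant continuous functions on the sphere (by Stone-Weierstrass applied to the algebra generated by the invariant inner products $\langle x_i, x_j\rangle$, $i,j=1,\dots,n$, of the $\kappa$-tuple components), we conclude $\widehat{\tilde\nu} \geq 0$. A final Parseval step, combined with positivity of $(\|\cdot\|_K^{-\kappa})^{\wedge}$ guaranteed by Theorem \ref{th_kibpdd}, delivers $\int\|\theta\|_K^{-\kappa}\,d\nu(\theta) \geq 0$, the desired contradiction.

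The principal obstacle is this density step: passing from non-negativity against the restricted family of $\kappa$-balanced ellipsoid norm powers to non-negativity of $\widehat{\tilde\nu}$ against arbitrary positive $\kappa$-invariant test functions. Once that is in hand, the Fourier-Parseval machinery closes the argument, though at each stage one must mollify or truncate carefully so that the distributional pairings of homogeneous distributions with the signed measure $\nu$ remain well-defined.
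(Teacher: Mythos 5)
The forward direction is sound and close in spirit to the paper's (both rest on Theorem~\ref{th_kibpdd} and the Fourier computation for ellipsoid norms, which is exactly Lemma~\ref{le_fte}). The converse is where the proposal diverges: you replace the paper's constructive approximation scheme by a Hahn--Banach separation argument, which is a genuinely different strategy. Unfortunately it has a real gap exactly where you suspect one.

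The problem is that the subordination identity
$$\langle Ax,x\rangle^{-(\kappa n-\kappa)/2}=\frac{1}{\Gamma\left(\tfrac{\kappa n-\kappa}{2}\right)}\int_0^{\infty}t^{(\kappa n-\kappa)/2-1}\,e^{-t\langle Ax,x\rangle}\,dt$$
expresses the ellipsoid norm power as a \emph{positive average} of Gaussians. Applied to the functional $F=\langle\widehat{\tilde\nu},\cdot\rangle$, it gives
$$F\bigl(\|\cdot\|_{E^\ast}^{-\kappa n+\kappa}\bigr)=\frac{1}{\Gamma\left(\tfrac{\kappa n-\kappa}{2}\right)}\int_0^{\infty}t^{(\kappa n-\kappa)/2-1}\,F\bigl(e^{-t\langle A\cdot,\cdot\rangle}\bigr)\,dt\,.$$
This shows that non-negativity of $F$ on all $\kappa$-invariant Gaussians implies non-negativity on all $\|\cdot\|_{E^\ast}^{-\kappa n+\kappa}$; you need the converse implication, and knowing the integral on the left is $\ge 0$ gives no control on the sign of the integrand for any particular $t$. (Note also that scaling $A\mapsto tA$ only reparametrizes the same ray, so the integral is a single Mellin-type moment per ellipsoid, not a family you can invert.) There is a second, smaller issue: even granted $F\ge 0$ on all $\kappa$-invariant Gaussians, Stone--Weierstrass supplies density of their \emph{linear} span in $C_\kappa(\s^{\kappa n-1})$, not that the closed \emph{conic} hull exhausts the nonnegative cone, so $\widehat{\tilde\nu}\ge 0$ would still need a separate localization argument.

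The paper avoids the duality route altogether. It proves (Lemma~\ref{le_sbabi}) that for any star body $K$ one has the explicit uniform approximation
$$\|\xi\|_K^{-\kappa}\;\approx\;\frac{1}{a^{\kappa(n-2)}}\int_{\s^{\kappa n-1}}\|\theta\|_K^{-\kappa}\,\|\theta\|^{-\kappa n+\kappa}_{E_{b(a),a}(\xi)}\,d\theta$$
as $a\to 0$, then applies spherical Parseval and Lemma~\ref{le_fte} to rewrite this as $\int_{\s^{\kappa n-1}}\|\theta\|_{E_{a,b}(\xi)}^{-\kappa}\,d\mu_K(\theta)$ with $d\mu_K\propto(\|\cdot\|_K^{-\kappa})^{\wedge}$ — positive precisely because $K$ is an intersection body — and finally discretizes $\mu_K$ (Lemma~\ref{le_abem}) to obtain finite sums of ellipsoid norm powers. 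In effect, the paper builds the desired $\kappa$-radial sums by hand, which is what you would need the separation argument to deliver abstractly. If you want to salvage the duality route, the content you are missing is precisely a proof that the closed conic hull of $\{\|\cdot\|_E^{-\kappa n+\kappa}\}$ (equivalently, of $\{\|\cdot\|_E^{-\kappa}\}$, by the Fourier bijection on ellipsoids) is the full cone of nonnegative $\kappa$-invariant continuous functions on the sphere — and that statement is essentially equivalent to the theorem itself.
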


For a vector $\xi$ on the sphere denote by $\{e_1, \dots, e_{\kappa}\}$ an orthonormal basis in $H_{\xi}^{\perp}$. For $a>0, b>0$, let $E_{a,b}(\xi)$ be an ellipsoid in $\R^{\kappa n}$ with the norm 
$$ \|x\|_{E_{a,b}(\xi)}= \left( \frac{\sum_{i=1}^{\kappa} (x, e_i)^2}{a^2} + \frac{|x|_2^2- \sum_{i=1}^{\kappa} (x, e_i)^2}{b^2}\right)^{\frac{1}{2}} \, ,$$
where $x\in\R^{\kappa n}$. Note that $(\sum_{i=1}^{\kappa} (x, e_i)^2)^{1/2}$ is the length of the projection of the vector $x$ onto the subspace $H_{\xi}^{\perp}$. For $\sigma \in \SO(\kappa)$, the projection of $R_{\sigma} x$ onto $H_{\xi}^{\perp}$ has the same length as the projection of $x$ itself, hence $E_{a,b}(\xi)$ is a $\kappa$-balanced ellipsoid or an ellipsoid in $\K^n$.

Recall the formula for the Fourier transform of powers of the Euclidean norm in $\R^n$:
$$ \left(|\cdot|_2^p\right)^{\wedge}(\theta)=\frac{\pi^{\frac{n}{2}} 2^{n+p} \Gamma(\frac{n+p}{2}) }{\Gamma(-\frac{p}{2})} |\theta|_2^{-n-p} \, ,$$
and the formula connecting the Fourier transform and linear transformations
$$ (f(T\cdot))^{\wedge}(y) = |\det T|^{-1} \hat{f}((T^{\ast})^{-1}y) \, ,$$
where $T$ is a linear transformation and $T^{\ast}$ denotes the adjoint of $T$.

\begin{lemma}\label{le_fte}
For $\theta\in \s^{\kappa n-1}$
$$ \left( \|\cdot\|^{-\kappa}_{E_{a,b}(\xi)} \right)^{\wedge}(\theta) = \frac{C(n,\kappa)}{a^{\kappa(n-2)}} \|\theta\|^{-\kappa n +\kappa}_{E_{b, a}(\xi)} \, ,$$
with $C(n,\kappa) = \pi^{\frac{\kappa n}{2}} 2^{\kappa n - \kappa} \Gamma(\frac{\kappa n - \kappa}{2}) / \Gamma(\frac{\kappa}{2})$.
\end{lemma}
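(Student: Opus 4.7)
The plan is to realize $\|\cdot\|_{E_{a,b}(\xi)}$ as the Euclidean norm pulled back by a linear map, and then use the two formulas recalled just before the lemma: the Fourier transform of $|\cdot|_2^p$ and the change-of-variables rule $(f(T\cdot))^{\wedge}(y)=|\det T|^{-1}\hat f((T^\ast)^{-1}y)$.

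Let $P$ denote orthogonal projection onto $H_\xi^{\perp}$, so that $\sum_{i=1}^{\kappa}(x,e_i)^2=|Px|_2^2$ and $|x|_2^2-|Px|_2^2=|(I-P)x|_2^2$. Define the self-adjoint linear map $T=\tfrac{1}{a}P+\tfrac{1}{b}(I-P)$ on $\R^{\kappa n}$. Then
$$|Tx|_2^2=\frac{|Px|_2^2}{a^2}+\frac{|(I-P)x|_2^2}{b^2}=\|x\|_{E_{a,b}(\xi)}^2,$$
so $\|x\|_{E_{a,b}(\xi)}^{-\kappa}=|Tx|_2^{-\kappa}$. Since $T$ acts as $a^{-1}$ on the $\kappa$-dimensional subspace $H_\xi^\perp$ and as $b^{-1}$ on the $(\kappa n-\kappa)$-dimensional subspace $H_\xi$, we have $|\det T|=a^{-\kappa}b^{-(\kappa n-\kappa)}$ and $T^{-1}=aP+b(I-P)$.

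Applying the change-of-variables formula together with the formula for the Fourier transform of $|\cdot|_2^{-\kappa}$ on $\R^{\kappa n}$ (take $p=-\kappa$ and replace $n$ by $\kappa n$), we obtain
$$\bigl(\|\cdot\|_{E_{a,b}(\xi)}^{-\kappa}\bigr)^{\wedge}(\theta)=|\det T|^{-1}\bigl(|\cdot|_2^{-\kappa}\bigr)^{\wedge}(T^{-1}\theta)=a^{\kappa}b^{\kappa n-\kappa}\,C(n,\kappa)\,|T^{-1}\theta|_2^{-\kappa n+\kappa}.$$

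The key computation is to identify $|T^{-1}\theta|_2$ with the norm of $E_{b,a}(\xi)$ (note the swap of $a$ and $b$). Since
$$|T^{-1}\theta|_2^2=a^2|P\theta|_2^2+b^2|(I-P)\theta|_2^2=a^2b^2\left(\frac{|P\theta|_2^2}{b^2}+\frac{|(I-P)\theta|_2^2}{a^2}\right)=a^2b^2\,\|\theta\|_{E_{b,a}(\xi)}^2,$$
we get $|T^{-1}\theta|_2^{-\kappa n+\kappa}=(ab)^{-\kappa n+\kappa}\,\|\theta\|_{E_{b,a}(\xi)}^{-\kappa n+\kappa}$. Substituting this back gives the prefactor $C(n,\kappa)\,a^{\kappa-\kappa n+\kappa}b^{\kappa n-\kappa-\kappa n+\kappa}=C(n,\kappa)\,a^{-\kappa(n-2)}b^{0}$, which is exactly $C(n,\kappa)/a^{\kappa(n-2)}$, as claimed.

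The only place one has to be a little careful is the last step: the roles of $a$ and $b$ in the ellipsoid on the right-hand side are swapped relative to the left, and this swap is precisely what causes the $b$-exponent to cancel while leaving $a^{-\kappa(n-2)}$. Everything else is a direct application of the two Fourier formulas recalled above.
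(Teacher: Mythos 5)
Your proof is correct and follows essentially the same route as the paper: introduce a linear map $T$ identifying the ellipsoid norm with a pulled-back Euclidean norm, apply the Fourier transform change-of-variables formula together with the formula for $\left(|\cdot|_2^{-\kappa}\right)^{\wedge}$ on $\R^{\kappa n}$, and then recognize the resulting Euclidean-norm expression as the norm of $E_{b,a}(\xi)$ up to a scalar. The only cosmetic difference is that the paper takes $T$ to be the map sending $B_2^{\kappa n}$ onto $E_{a,b}(\xi)$ (so $\|x\|_{E_{a,b}(\xi)}=|T^{-1}x|_2$) while you take its inverse $T=\tfrac1aP+\tfrac1b(I-P)$ (so $\|x\|_{E_{a,b}(\xi)}=|Tx|_2$); writing $T$ explicitly as a combination of projections, as you do, is a small but welcome clarification, since it makes the self-adjointness and the eigenvalue count for $|\det T|$ immediate.
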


\begin{proof}
Let $T$ be a linear operator so that $T B^{\kappa n}_2 = E_{a,b}(\xi)$, then $T$ is a composition of a diagonal operator and a rotation.
\begin{align*}
	\left( \|\cdot\|^{-\kappa}_{E_{a,b}(\xi)} \right)^{\wedge}(\theta) 
	&= \left( |T^{-1} \cdot|^{-\kappa}_2 \right)^{\wedge}(\theta)\\
	&= |\det T^{-1}|^{-1} \left( |\cdot|^{-\kappa}_2 \right)^{\wedge}(T^{\ast}\theta)\\
	&= |\det T|\, C(n, \kappa) |T^{\ast} \theta|^{-\kappa n +\kappa}_2 \\
	&= |\det T|\, C(n, \kappa) \|\theta\|_{ (T^{\ast})^{-1} B^{\kappa n}_2 }^{-\kappa n +\kappa}\\
	&= |\det T|\, C(n, \kappa) \|\theta\|_{E_{\frac{1}{a}, \frac{1}{b}}(\xi)}^{-\kappa n +\kappa} \\
	&= |\det T|\, C(n, \kappa) \|ab \theta\|_{E_{b, a}(\xi)}^{-\kappa n +\kappa} \\
	&= \frac{C(n, \kappa)}{a^{\kappa n-2\kappa}} \|\theta\|_{E_{b, a}(\xi)}^{-\kappa n +\kappa} \, .
\end{align*}
\end{proof}

\begin{lemma}\label{le_sbabi}
Let $K$ be an origin-symmetric star body in $\K^n$, then $\|\cdot\|^{-\kappa}_K$ can be approximated in the space of $C_{\kappa}(\s^{\kappa n -1})$ by functions of the form
$$ f_{a,b}(\xi) = \frac{1}{a^{\kappa (n-2)}} \int_{\s^{\kappa n -1}} \|\theta \|_K^{-\kappa} \|\theta\|^{-\kappa n +\kappa}_{E_{b, a}(\xi)} d\theta $$
for an appropriate choice of $b$ and $a\rightarrow 0$.
\end{lemma}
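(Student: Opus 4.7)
The plan is to show that $f_{a,b(a)}\to\|\cdot\|_K^{-\kappa}$ uniformly on $\s^{\kappa n-1}$ by demonstrating that the family of positive kernels $\theta\mapsto a^{-\kappa(n-2)}\|\theta\|_{E_{b,a}(\xi)}^{-\kappa(n-1)}$ acts as an approximate identity on $C_\kappa(\s^{\kappa n-1})$, concentrating mass near $\s^{\kappa n-1}\cap H_\xi^\perp$ once $b=b(a)$ is chosen to normalize the total mass to $1$. The crucial $\kappa$-invariance input is that $\|\cdot\|_K^{-\kappa}$ takes the constant value $\|\xi\|_K^{-\kappa}$ on the $\SO(\kappa)$-orbit $\s^{\kappa n-1}\cap H_\xi^\perp$ of $\xi$.

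First I would parameterize $\theta=\cos\alpha\,\omega_1+\sin\alpha\,\omega_2$ with $\omega_1\in\s^{\kappa-1}\subset H_\xi^\perp$, $\omega_2\in\s^{\kappa n-\kappa-1}\subset H_\xi$ and $\alpha\in[0,\pi/2]$, so that $d\theta=\cos^{\kappa-1}\alpha\sin^{\kappa n-\kappa-1}\alpha\,d\alpha\,d\omega_1\,d\omega_2$ and $\|\theta\|_{E_{b,a}(\xi)}^{-\kappa(n-1)}=(\cos^2\alpha/b^2+\sin^2\alpha/a^2)^{-\kappa(n-1)/2}$ depends only on $\alpha$. Defining the angular average $\bar h_\xi(\alpha)=(|\s^{\kappa-1}||\s^{\kappa n-\kappa-1}|)^{-1}\int\!\!\int\|\cos\alpha\,\omega_1+\sin\alpha\,\omega_2\|_K^{-\kappa}d\omega_1 d\omega_2$, the identity $\bar h_\xi(0)=\|\xi\|_K^{-\kappa}$ holds by $\kappa$-invariance, and continuity of $\|\cdot\|_K^{-\kappa}$ together with compactness of $\s^{\kappa n-1}\times[0,\pi/2]$ gives uniform continuity of $\bar h_\xi(\alpha)$ jointly in $(\xi,\alpha)$. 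Thus
$$f_{a,b}(\xi)=\frac{|\s^{\kappa-1}||\s^{\kappa n-\kappa-1}|}{a^{\kappa(n-2)}}\int_0^{\pi/2}\bar h_\xi(\alpha)P_{a,b}(\alpha)\,d\alpha,$$
where $P_{a,b}(\alpha)=(\cos^2\alpha/b^2+\sin^2\alpha/a^2)^{-\kappa(n-1)/2}\cos^{\kappa-1}\alpha\sin^{\kappa n-\kappa-1}\alpha$.

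The next step is the normalization. The total mass $\mathcal{M}(a,b)=a^{-\kappa(n-2)}|\s^{\kappa-1}||\s^{\kappa n-\kappa-1}|\int_0^{\pi/2}P_{a,b}(\alpha)d\alpha$ is continuous and strictly increasing in $b$ (its $b$-derivative is a positive integral), tends to $0$ as $b\to 0^+$ by dominated convergence, and after the substitution $\tan\alpha=(a/b)s$ becomes a constant multiple of $a^\kappa\int_0^\infty s^{\kappa(n-1)-1}(1+s^2)^{-\kappa(n-1)/2}(1+(a/b)^2 s^2)^{-\kappa/2}ds$, which by monotone convergence limits as $b\to\infty$ to a logarithmically divergent integral. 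The intermediate value theorem then provides, for all sufficiently small $a>0$, a choice $b=b(a)$ with $\mathcal{M}(a,b(a))=1$; an elementary asymptotic analysis shows $\ln(b(a)/a)\to\infty$, so $b(a)/a\to\infty$.

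Finally I would combine these ingredients. Using $\mathcal{M}(a,b(a))=1$ one writes
$$f_{a,b(a)}(\xi)-\|\xi\|_K^{-\kappa}=\frac{|\s^{\kappa-1}||\s^{\kappa n-\kappa-1}|}{a^{\kappa(n-2)}}\int_0^{\pi/2}\bigl(\bar h_\xi(\alpha)-\bar h_\xi(0)\bigr)P_{a,b(a)}(\alpha)\,d\alpha.$$
For $\epsilon>0$, uniform continuity of $\bar h_\xi$ yields $\delta>0$ (independent of $\xi$) such that the contribution from $\alpha\in[0,\delta]$ is dominated by $\epsilon\cdot\mathcal{M}(a,b(a))=\epsilon$, while for $\alpha\in[\delta,\pi/2]$ the pointwise bound $(\cos^2\alpha/b^2+\sin^2\alpha/a^2)^{-\kappa(n-1)/2}\leq (a/\sin\delta)^{\kappa(n-1)}$ reduces $P_{a,b(a)}$ to $O(a^{\kappa(n-1)})$ uniformly, making the tail $O(a^\kappa)\to 0$. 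The main obstacle is the third step: verifying quantitatively that $\mathcal{M}(a,b)\to\infty$ as $b\to\infty$ (to apply IVT) and controlling the resulting super-fast blow-up of $b(a)$, while still being able to bound the tail from $[\delta,\pi/2]$ uniformly despite this blow-up.
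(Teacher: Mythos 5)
Your proposal is correct and follows essentially the same route as the paper: normalize the kernel by choosing $b=b(a)$ via the intermediate value theorem, then split into a near region (small angle to $H_\xi^\perp$) handled by uniform continuity and $\kappa$-invariance, and a far region handled by a pointwise kernel bound. The paper's split $\sum_i(\theta,e_i)^2\ge\delta$ versus $<\delta$ is exactly your $\alpha\le\arccos\sqrt\delta$ versus $>\arccos\sqrt\delta$, and your averaged function $\bar h_\xi(\alpha)$ packages the explicit rotations $\sigma_1,\sigma_2$ that the paper constructs to exploit $\kappa$-invariance.

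The ``main obstacle'' you flag at the end is not actually an obstacle. Your tail bound
\[
\left(\frac{\cos^2\alpha}{b^2}+\frac{\sin^2\alpha}{a^2}\right)^{-\kappa(n-1)/2}\le\left(\frac{a}{\sin\delta}\right)^{\kappa(n-1)},\qquad \alpha\in[\delta,\pi/2],
\]
drops the $b$-dependent term entirely, so it holds uniformly in $b>0$; the rate at which $b(a)$ blows up is irrelevant. Likewise, the existence of $b(a)$ needs only qualitative behaviour (monotonicity in $b$, limits $0$ and $\infty$), which your logarithmic divergence already gives. One small simplification the paper uses, which you might note: by Parseval (Lemma~\ref{le_fte}) the normalizing integral equals $\int_{\s^{\kappa n-1}}\|\theta\|_{E_{a,b}(\xi)}^{-\kappa}d\theta$, i.e.\ involves the much milder exponent $-\kappa$ instead of $-\kappa(n-1)$, which makes the limiting behaviour in $b$ transparent without the change of variables.
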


\begin{proof}
Using the formula for the Fourier transform of powers of the Euclidean norm, Parseval's formula on the sphere and previous lemma, we get
\begin{align*}
	\int_{\s^{\kappa n -1}} \|\theta\|^{-\kappa n +\kappa}_{E_{b, a}(\xi)} d\theta 
		&= \frac{C(n,\kappa)}{(2\pi)^{\kappa n}} \int_{\s^{\kappa n -1}} \|\theta\|^{-\kappa n +\kappa}_{E_{b, a}(\xi)} (|\cdot|_2^{-\kappa n +\kappa})^{\wedge}(\theta) d\theta \\
		&= \frac{C(n,\kappa)}{(2\pi)^{\kappa n}} \int_{\s^{\kappa n -1}} (\|\cdot\|^{-\kappa n +\kappa}_{E_{b, a}(\xi)})^{\wedge}(\theta) d\theta \\
  	&= a^{\kappa (n-2)} \int_{\s^{\kappa n-1}} \| \theta \|^{-\kappa}_{E_{a, b}(\xi)} d\theta \, . 
\end{align*}
Thus 
$$ \frac{1}{a^{\kappa (n-2)}} \!\! \int\limits_{\s^{\kappa n -1}} \|\theta\|^{-\kappa n +\kappa}_{E_{b, a}(\xi)} d\theta = \!\! \int\limits_{\s^{\kappa n-1}} \left( \frac{ \sum_{i=1}^{\kappa} (\theta, e_i)^2 }{a^2} + \frac{1- \sum_{i=1}^{\kappa} (\theta, e_i)^2}{b^2} \right)^{\frac{-\kappa}{2}} \!\! d\theta \, .$$

Note that for a fixed $a$ this integral approaches infinity as $b\rightarrow \infty$ and it goes to zero as $b\rightarrow 0$. Hence for every $a$ there exists $b=b(a)$ such that
$$  \frac{1}{a^{\kappa (n-2)}} \int_{\s^{\kappa n -1}} \|\theta\|^{-\kappa n +\kappa}_{E_{b(a), a}(\xi)} d\theta = 1 \, .$$
Since the measure in the above integral is rotation invariant, $b(a)$ does not depend on $\xi$. Hence for every $\xi$ on the sphere and for any $\delta \in (0,1)$, we have
\begin{align*}
	& \left| \|\xi \|_K^{-\kappa} - \frac{1}{a^{\kappa (n-2)}} \int_{\s^{\kappa n -1}} \|\theta \|_K^{-\kappa} \|\theta\|^{-\kappa n +\kappa}_{E_{b(a), a}(\xi)} d\theta\right| \\
	&\leq \frac{1}{a^{\kappa (n-2)}} \int_{\s^{\kappa n -1}} \left| \|\xi \|_K^{-\kappa} - \|\theta \|_K^{-\kappa} \right| \|\theta\|^{-\kappa n +\kappa}_{E_{b(a), a}(\xi)} d\theta \\
	&= \frac{1}{a^{\kappa (n-2)}} \left(\int_{\sum\limits_{i=1}^{\kappa} (\theta, e_i)^2\geq \delta} + \int_{\sum\limits_{i=1}^{\kappa} (\theta, e_i)^2 < \delta}\right) \left| \|\xi \|_K^{-\kappa} - \|\theta \|_K^{-\kappa} \right| \|\theta\|^{-\kappa n +\kappa}_{E_{b(a), a}(\xi)} d\theta \\
	&= I_1+I_2 \, .
\end{align*}

By the uniform continuity of the function $\|\cdot \|_K$ on the sphere, for any $\epsilon>0$, there is $\delta \in (0,1)$, $\delta$ close to one, so that $|\|\xi\|^{-\kappa}_K - \|\theta\|^{-\kappa}_K|<\frac{\epsilon}{2}$ for $(\theta, \xi)\geq \delta^{\frac{1}{2}}$. For $\theta$ on the sphere with $\sum_{i=1}^{\kappa} (\theta, e_i)^2\geq \delta$, let $\sigma_1 \in \SO(\kappa)$ be so that $(R_{\sigma_1}\theta, e_i)=0$ for $i\neq 1$. Since the length of the projection onto $ H_{\xi}^{\perp} $ of $\theta$ and $ R_{\sigma_1}\theta $ is the same, it follows that $(R_{\sigma_1}\theta, e_1)\geq \delta^{\frac{1}{2}}$. To see that such rotation $\sigma_1$ exists, write $\theta=\theta_{\xi} + \theta_{\xi^{\perp}}$, with $\theta_{\xi}\in H_{\xi}, \theta_{\xi^{\perp}}\in H_{\xi}^{\perp}$. Observe that for any two vectors $u,v \in H_{\xi}^{\perp}$ with $\|u\|=\|v\|$ there exists $\sigma$ so that $R_{\sigma}u=v$. In particular, there exists $\sigma_1$ so that $R_{\sigma_1} \frac{\theta_{\xi^{\perp}}}{\|\theta_{\xi^{\perp}}\|} = e_1$. Since $R_{\sigma} \theta_{\xi}\in H_{\xi}, R_{\sigma} \theta_{\xi^{\perp}}\in H_{\xi}^{\perp}$ for any $\sigma$, we have $ (R_{\sigma_1}\theta, e_i)=(R_{\sigma_1}\theta_{\xi} + R_{\sigma_1}\theta_{\xi^{\perp}}, e_i)=(R_{\sigma_1}\theta_{\xi^{\perp}}, e_i) $. Let $\sigma_2$ be so that $R_{\sigma_2} e_1 = \xi$. Such $\sigma_2$ exists since $\xi, e_1 \in H_{\xi}^{\perp}$. Then 
$$ (R_{\sigma_1}\theta, e_1) = (R_{\sigma_2} R_{\sigma_1}\theta, R_{\sigma_2} e_1) = (R_{\sigma_2} R_{\sigma_1}\theta, \xi) \geq \delta^{\frac{1}{2}} .$$
Since $K$ is $\kappa$-balanced, we obtain 
$$|\|\xi\|^{-\kappa}_K - \|\theta\|^{-\kappa}_K| = |\|\xi\|^{-\kappa}_K - \|R_{\sigma_2} R_{\sigma_1}\theta\|^{-\kappa}_K|<\frac{\epsilon}{2} \, .$$ 
Thus with this choice of $\delta$, we can estimate the first integral as follows:
\begin{align*}
	I_1 &= \frac{1}{a^{\kappa (n-2)}} \int_{\sum_{i=1}^{\kappa} (\theta, e_i)^2\geq \delta} \left| \|\xi \|_K^{-\kappa} - \|\theta \|_K^{-\kappa} \right| \|\theta\|^{-\kappa n +\kappa}_{E_{b(a), a}(\xi)} d\theta \\
	&< \frac{\epsilon}{2} \frac{1}{a^{\kappa (n-2)}} \int_{\sum_{i=1}^{\kappa} (\theta, e_i)^2 \geq \delta} \|\theta\|^{-\kappa n +\kappa}_{E_{b(a), a}(\xi)} d\theta \leq \frac{\epsilon}{2} \, .
\end{align*}
Next, estimate the second integral as follows:
\begin{align*}
	I_2 &= \frac{1}{a^{\kappa (n-2)}} \int_{\sum_{i=1}^{\kappa} (\theta, e_i)^2 < \delta} \left| \|\xi \|_K^{-\kappa} - \|\theta \|_K^{-\kappa} \right| \|\theta\|^{-\kappa n +\kappa}_{E_{b(a), a}(\xi)} d\theta \\
	&\leq \frac{2 \max\limits_{x\in\s^{\kappa n-1}}  \|x\|_K^{-\kappa}}{a^{\kappa (n-2)}} \int_{\sum_{i=1}^{\kappa} (\theta, e_i)^2 < \delta} \|\theta\|^{-\kappa n +\kappa}_{E_{b(a), a}(\xi)} d\theta \\
	&\leq \frac{2 \max\limits_{x\in\s^{\kappa n-1}}  \|x\|_K^{-\kappa}}{a^{\kappa (n-2)}} \int_{\sum_{i=1}^{\kappa} (\theta, e_i)^2 < \delta} \left(\frac{1- \sum_{i=1}^{\kappa} (\theta, e_i)^2}{a^2} \right)^{\frac{-\kappa n +\kappa}{2}} d\theta \\
	&= 2 a^{\kappa} \max\limits_{x\in\s^{\kappa n-1}}  \|x\|_K^{-\kappa} \int_{\sum_{i=1}^{\kappa} (\theta, e_i)^2 < \delta} \left(1- \sum_{i=1}^{\kappa} (\theta, e_i)^2 \right)^{\frac{-\kappa n +\kappa}{2}} d\theta \\
	&\leq 2 a^{\kappa} \max\limits_{x\in\s^{\kappa n-1}}  \|x\|_K^{-\kappa} |\s^{\kappa n-1}| \left(1-\delta \right)^{\frac{-\kappa n +\kappa}{2}} \, .
\end{align*}
Now we can choose $a$ so small that $I_2 \leq \frac{\epsilon}{2}$.
\end{proof}

\begin{lemma}\label{le_abem}
Let $\mu$ be a finite measure on $\s^{\kappa n -1}$ and $a, b>0$. The function 
$$ f(\xi) = \int_{\s^{\kappa n -1}} \|\theta \|^{-\kappa}_{E_{a,b}(\xi)} d\mu(\theta) $$
is the limit, in the space $C_{\kappa}(\s^{\kappa n -1})$, of sums of the form
$$ \sum\limits_{i=1}^{m} \|\xi\|^{-\kappa}_{E_i} \, ,$$
where $E_1, \dots, E_m$ are $\kappa$-balanced ellipsoids. 
\end{lemma}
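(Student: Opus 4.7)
The plan is to exchange the roles of $\xi$ and $\theta$ inside the integrand via a duality identity, thereby re-expressing $f(\xi)$ as an integral over $\theta$ of the $(-\kappa)$-th power of the Minkowski functional of a fixed $\kappa$-balanced ellipsoid. The resulting integral is then approximated uniformly by finite Riemann sums, and the scalar weights are absorbed into dilations of the ellipsoids.

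\textbf{Step 1 (duality).} I claim that for $\xi, \theta \in \s^{\kappa n-1}$
\[
\|\theta\|_{E_{a,b}(\xi)} = \|\xi\|_{E_{a,b}(\theta)} \, .
\]
Since $\|\theta\|^{2}_{E_{a,b}(\xi)}$ depends on $\xi$ only through $|P_{H_{\xi}^{\perp}} \theta|^{2}$, this reduces to the symmetry
\[
|P_{H_{\xi}^{\perp}} \theta| = |P_{H_{\theta}^{\perp}} \xi| \, .
\]
By Lemma \ref{lemma_hyperplane}, the orbit $\{ R_{\sigma} \xi : \sigma \in \SO(\kappa)\}$ is the full unit sphere of the $\kappa$-plane $H_{\xi}^{\perp}$, and this yields the Haar representation of the orthogonal projector
\[
P_{H_{\xi}^{\perp}} = \kappa \int_{\SO(\kappa)} (R_{\sigma}\xi)(R_{\sigma}\xi)^{T} \, d\sigma \, ,
\]
from which
\[
|P_{H_{\xi}^{\perp}} \theta|^{2} = \kappa \int_{\SO(\kappa)} \langle R_{\sigma}\xi, \theta \rangle^{2} \, d\sigma \, .
\]
The identity $\langle R_{\sigma}\xi, \theta\rangle = \langle \xi, R_{\sigma^{-1}}\theta\rangle$ combined with the unimodularity of Haar measure makes the right-hand side manifestly symmetric in $\xi$ and $\theta$.

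\textbf{Step 2 (Riemann sums and absorption).} Substituting the duality into the definition of $f$ gives
\[
f(\xi) = \int_{\s^{\kappa n-1}} \|\xi\|_{E_{a,b}(\theta)}^{-\kappa} \, d\mu(\theta) \, .
\]
The map $(\xi, \theta) \mapsto \|\xi\|_{E_{a,b}(\theta)}^{-\kappa}$ is jointly continuous on the compact product $\s^{\kappa n-1} \times \s^{\kappa n-1}$ (continuous dependence of $P_{H_{\theta}^{\perp}}$ on $\theta$), hence uniformly continuous. Given $\varepsilon > 0$, I choose a finite Borel partition $\{A_{1}, \ldots, A_{m}\}$ of $\s^{\kappa n -1}$ and base points $\theta_{j} \in A_{j}$ so that the oscillation of $\theta \mapsto \|\xi\|_{E_{a,b}(\theta)}^{-\kappa}$ on each $A_{j}$ is below $\varepsilon / (1 + \mu(\s^{\kappa n-1}))$, uniformly in $\xi$. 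Then
\[
\sup_{\xi \in \s^{\kappa n -1}} \Bigl| f(\xi) - \sum_{j=1}^{m} \mu(A_{j}) \|\xi\|_{E_{a,b}(\theta_{j})}^{-\kappa} \Bigr| \leq \varepsilon \, .
\]
Setting $E_{j} := \mu(A_{j})^{1/\kappa} E_{a,b}(\theta_{j})$, which is again a $\kappa$-balanced ellipsoid since dilation preserves $\kappa$-balancedness, and using the homogeneity $\|\xi\|_{\lambda E}^{-\kappa} = \lambda^{\kappa} \|\xi\|_{E}^{-\kappa}$, the approximating sum takes the required form $\sum_{j=1}^{m} \|\xi\|_{E_{j}}^{-\kappa}$.

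\textbf{Main obstacle.} The delicate step is the symmetry $|P_{H_{\xi}^{\perp}} \theta| = |P_{H_{\theta}^{\perp}} \xi|$: this does not hold for generic families of $\kappa$-planes and uses in a crucial way the $\SO(\kappa)$-orbit structure of the subspaces $H_{\xi}^{\perp}$. The Haar-averaging representation of the projector is what converts that orbit structure into a manifestly symmetric double integral. Once this duality is in place, Step 2 is a routine uniform-continuity and Riemann-sum argument.
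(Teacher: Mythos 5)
Your argument is correct and follows essentially the same route as the paper: both rest on the symmetry $\rho(E_{a,b}(\xi),\theta)=\rho(E_{a,b}(\theta),\xi)$ to rewrite $f(\xi)$ as an integral over $\theta$ of $\|\xi\|^{-\kappa}_{E_{a,b}(\theta)}$, then approximate by a finite partition of the sphere (the paper via a Lipschitz estimate, you via uniform continuity on a compact product) and finally absorb the weights $\mu(A_j)$ into dilated ellipsoids. Your Haar-averaging derivation of the projector is a nice explicit justification of the symmetry that the paper merely asserts.
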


\begin{proof}
For $\epsilon > 0 $, choose a finite covering of the sphere by spherical $\epsilon$-balls: $B_{\epsilon}(\xi_i) = \{\theta\in\s^{\kappa n -1} \: |\theta-\xi_i|_2 < \epsilon \}$, $\xi_i\in\s^{\kappa n -1}$, $i=1, \dots, m = m(\epsilon)$. Define
$$ \tilde{B}_{\epsilon}(\xi_1)= B_{\epsilon}(\xi_1) \hspace{3mm} \text{ and }  \hspace{3mm} \tilde{B}_{\epsilon}(\xi_i)= B_{\epsilon}(\xi_i) \setminus \bigcup\limits_{j=1}^{i-1} B_{\epsilon}(\xi_j)\, ,  \hspace{3mm} \text{for i=2, \dots, m}\, .$$ 
Set $p_i=\mu(\tilde{B}_{\epsilon}(\xi_i))$, then $p_1 + \cdots + p_m = \mu(\s^{\kappa n -1})$. 

Denote by $\rho(E_{a,b}(\xi), x)$ the value of the radial function of the ellipsoid $E_{a,b}(\xi)$ at the point $x\in \s^{\kappa n -1}$. Observe that the length of the projection of the vector $x$ onto $H_{\xi}^{\perp}$ equals to  
the length of the projection of the vector $\xi$ onto $H_{x}^{\perp}$. Since $\rho(E_{a,b}(\xi), x)$ depends only on the length of the projection of $x$ onto $H_{\xi}^{\perp}$, it follows that $\rho(E_{a,b}(\xi), x) = \rho(E_{a,b}(x), \xi)$. Hence
$$ |\rho^{\kappa}(E_{a,b}(\xi), x) - \rho^{\kappa}(E_{a,b}(\theta), x)| \leq C_{a,b} |\xi - \theta| \, ,$$
for some constant $C_{a,b}$ depending only on $a$ and $b$. We are now ready to estimate
\begin{align*}
 & \left| \int_{\s^{\kappa n - 1}} \rho^{\kappa}(E_{a,b}(\xi), x) d\mu(\xi) - \sum\limits_{i=1}^{m} p_i \,  \rho^{\kappa}(E_{a,b}(\xi_i), x) \right| \\
 &= \left| \sum\limits_{i=1}^{m} \left( \int_{\tilde{B}_{\epsilon}(\xi_i)} \rho^{\kappa}(E_{a,b}(\xi), x) d\mu(\xi) -  \int_{\tilde{B}_{\epsilon}(\xi_i)} \rho^{\kappa}(E_{a,b}(\xi_i), x) d\mu(\xi) \right) \right| \\
 &\leq \sum\limits_{i=1}^{m} \int_{\tilde{B}_{\epsilon}(\xi_i)} |\rho^{\kappa}(E_{a,b}(\xi), x) -  \rho^{\kappa}(E_{a,b}(\xi_i), x) | d\mu(\xi) \\
 &\leq \sum\limits_{i=1}^{m} \int_{\tilde{B}_{\epsilon}(\xi_i)} C_{a,b} |\xi - \xi_i| d\mu(\xi) \\
 &\leq \epsilon \, C_{a,b} \, \mu(\s^{\kappa n - 1})\,.
\end{align*}
The result follows by letting $\epsilon \rightarrow 0$ and defining $ \|\xi\|^{-\kappa}_{E_i} = p_i \, \rho^{\kappa}(E_{a,b}(\xi_i), x)$.
\end{proof}

\noindent
\begin{proof}[Proof of Theorem \ref{th_ibaloe}]
The 'if' part follows immediately from Theorem \ref{th_kibpdd} and Lemma \ref{le_fte}.

To prove the converse, suppose $K$ is an intersection body in $\K^n$. By Lemma \ref{le_sbabi}, $\|\xi\|^{-\kappa}_K$ is a uniform limit of functions of the form
$$ \frac{1}{a^{\kappa (n-2)}} \int_{\s^{\kappa n -1}} \|\theta \|_K^{-\kappa} \|\theta\|^{-\kappa n +\kappa}_{E_{b, a}(\xi)} d\theta  \, ,$$ 
as $a \rightarrow 0$. By Parseval's formula on the sphere, this equals to
$$ \frac{1}{C(n,\kappa)} \int_{\s^{\kappa n -1}} \|\theta\|^{-\kappa}_{E_{a, b}(\xi)} (\|\cdot \|_K^{-\kappa})^{\wedge}(\theta) d\theta = \int_{\s^{\kappa n -1}} \|\theta\|^{-\kappa}_{E_{a, b}(\xi)}  d\mu(\theta) \, ,$$
where $d\mu = 1/C(n,\kappa) (\|\cdot \|_K^{-\kappa})^{\wedge}$. By Lemma \ref{le_abem}, the above is the uniform limit of sums of the form
$$ \|\xi\|^{-\kappa}_{E_1} + \cdots + \|\xi\|^{-\kappa}_{E_m}\, .$$
\end{proof}

\begin{pr}\label{pr_ibgkib}
An origin-symmetric star body in $\K^n$ is an intersection body in $\K^n$ if and only if it is a $\kappa$-balanced generalized $\kappa$-intersection body in  $\R^{\kappa n}$. 
\end{pr}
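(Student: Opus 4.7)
The plan is to combine the geometric characterization of intersection bodies in $\K^n$ furnished by Theorem \ref{th_ibaloe} with the Grinberg-Zhang-Milman theorem (an origin-symmetric star body in $\R^N$ is a generalized $k$-intersection body if and only if its $(-k)$-norm is a uniform limit of $k$-radial sums of ellipsoid $(-k)$-norms) and with the Fourier characterization of Theorem \ref{th_kibpdd}. The forward direction will be a formal chaining of the two geometric characterizations; the converse will be a short Fourier argument showing that the generalized definition forces $\|\cdot\|_K^{-\kappa}$ to be a positive definite distribution on $\R^{\kappa n}$, whereupon Theorem \ref{th_kibpdd} produces the intersection-body-in-$\K^n$ structure.

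For the forward direction, I will invoke Theorem \ref{th_ibaloe} to write $\|\cdot\|_K^{-\kappa}$ as a uniform limit on $\s^{\kappa n-1}$ of finite sums of the form $\sum_i \|\cdot\|_{E_i}^{-\kappa}$ with $\kappa$-balanced ellipsoids $E_i$ in $\R^{\kappa n}$. Every single ellipsoid is trivially a generalized $\kappa$-intersection body, and Grinberg-Zhang-Milman ensures that the class of generalized $\kappa$-intersection bodies is closed under $\kappa$-radial sums and under radial-metric limits; hence $K$ belongs to it, and it is $\kappa$-balanced by hypothesis.

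For the converse, let $\nu$ denote the positive functional on $\RT_{\kappa n-\kappa}(C(\s^{\kappa n-1}))$ witnessing that $K$ is a generalized $\kappa$-intersection body, and let $\varphi$ be a non-negative even $\kappa$-invariant test function (it suffices to verify positive definiteness against such $\varphi$, since $\|\cdot\|_K^{-\kappa}$ is $\kappa$-invariant as a distribution, as already exploited in Section \ref{SectionRadonFourierTransforms}). Setting $f(\theta):=\int_0^\infty r^{\kappa n-\kappa-1}\hat{\varphi}(r\theta)\,dr$ for $\theta\in\s^{\kappa n-1}$, polar coordinates in $\R^{\kappa n}$ identify $\int_{\R^{\kappa n}} \|x\|_K^{-\kappa}\hat{\varphi}(x)\,dx$ with $\int_{\s^{\kappa n-1}}\|\theta\|_K^{-\kappa} f(\theta)\,d\theta$, which equals $\nu(\RT_{\kappa n-\kappa} f)$ by the defining property of $\nu$; polar coordinates inside a $(\kappa n-\kappa)$-dimensional subspace $H$ identify $\RT_{\kappa n-\kappa}f(H)$ with $\int_H\hat{\varphi}(x)\,dx$, and Lemma \ref{lemma_ftf} applied to the $\kappa$-dimensional subspace $H^\perp$ rewrites this as $(2\pi)^{\kappa n-\kappa}\int_{H^\perp}\varphi(x)\,dx$, which is non-negative. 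Positivity of $\nu$ then delivers $\langle(\|\cdot\|_K^{-\kappa})^\wedge,\varphi\rangle\ge 0$, so $\|\cdot\|_K^{-\kappa}$ is positive definite and Theorem \ref{th_kibpdd} closes the argument. The main obstacle, such as it is, lies in this Fourier chain: one must invoke Lemma \ref{lemma_ftf} with the roles of $H$ and $H^\perp$ swapped to produce $\int_H\hat{\varphi}$ (rather than $\int_{H^\perp}\hat{\varphi}$) and carefully track the homogeneity degrees in the polar-coordinate conversions; no symmetrization of the approximating ellipsoids over $\SO(\kappa)$ is needed, which is what makes the argument clean.
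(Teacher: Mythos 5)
Your forward direction is exactly the paper's argument: Theorem \ref{th_ibaloe} gives the decomposition of $\|\cdot\|_K^{-\kappa}$ as a uniform limit of $\kappa$-radial sums of $\kappa$-balanced ellipsoid $(-\kappa)$-norms, and since those ellipsoids are in particular ellipsoids, the Grinberg--Zhang characterization of generalized $k$-intersection bodies delivers the conclusion. Your converse, however, takes a genuinely different route. The paper treats the whole statement in one stroke by invoking the Grinberg--Zhang theorem as a two-way equivalence between ``$K$ is a generalized $\kappa$-intersection body'' and ``$\|\cdot\|_K^{-\kappa}$ is a limit of $\kappa$-radial sums of ellipsoid norms,'' and then plugs this into Theorem \ref{th_ibaloe}. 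Read literally, the reverse implication of that chain requires one to upgrade a decomposition over arbitrary ellipsoids to one over $\kappa$-balanced ellipsoids (the only kind Theorem \ref{th_ibaloe} speaks about); the paper leaves the $\SO(\kappa)$-symmetrization step implicit. Your converse bypasses the ellipsoid picture entirely: you take the positive functional $\nu$ supplied by the generalized $\kappa$-intersection body definition, observe via polar coordinates and Lemma \ref{lemma_ftf} (with the roles of $H$ and $H^\perp$ swapped, as you flag) that $\RT_{\kappa n-\kappa}f(H)=(2\pi)^{\kappa n-\kappa}\int_{H^\perp}\varphi\ge 0$ for every $(\kappa n-\kappa)$-dimensional $H$, deduce $\langle(\|\cdot\|_K^{-\kappa})^\wedge,\varphi\rangle=\nu(\RT_{\kappa n-\kappa}f)\ge 0$, and then close with Theorem \ref{th_kibpdd}. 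The reduction to $\kappa$-invariant even non-negative test functions is legitimate for the reason you give. In effect your converse re-proves, inside $\K^n$, the standard inclusion of generalized $k$-intersection bodies into $k$-intersection bodies and then uses the $\kappa$-balance to land in the class of intersection bodies in $\K^n$ via Corollary \ref{co_ibkib}; this is slightly longer than the paper's one-line citation but is fully self-contained and sidesteps the balanced-ellipsoid bookkeeping. Both proofs are valid.
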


\begin{proof}
By Theorem \ref{th_ibaloe}, $M$ is an intersection body in $\K^n$ if and only if $\|\cdot\|^{-\kappa}_{M}$ is the limit, in the space $C_{\kappa}(\s^{\kappa n-1})$, of $\kappa$-radial sums of ellipsoids in $\K^n$. The latter condition is true if and only if $M$ is a $\kappa$-balanced generalized $\kappa$-intersection body in  $\R^{\kappa n}$, by a result of E. Grinberg and G. Zhang, see Theorem 6.4 in \cite{MR1658156}.
\end{proof}

Together Proposition \ref{pr_ibgkib} and Corollary \ref{co_ibkib} imply that for $\kappa$-balanced origin-symmetric star bodies in $\R^{\kappa n}$ the class of $\kappa$-intersection bodies and the class of generalized $\kappa$-intersection bodies coincide. This is not true in general as was shown by E. Milman in \cite{milman2008}. 

%%%%%%%%%%%%%%%%%%%%%%%%%%%%%%%%%%%%%%%%%%%%%%%%%%%%%%%%%%%%%%%%%%%%%%%%%%%%%%%%%%%%%%%%%%%%%%%%%%%%%%%%%%%%%%%%%%%%%%%%%%%%%%%%%
%%%%%%%%%%%%%%%%%\section{Stability in the Busemann-Petty problem and related inequalities}%%%%%%%%%%%%%%%%%%%%%%%%%%%%%%%%%%%%%%
%%%%%%%%%%%%%%%%%%%%%%%%%%%%%%%%%%%%%%%%%%%%%%%%%%%%%%%%%%%%%%%%%%%%%%%%%%%%%%%%%%%%%%%%%%%%%%%%%%%%%%%%%%%%%%%%%%%%%%%%%%%%%%%%%
\section{Stability in the Busemann-Petty problem and hyperplane  inequalities}\label{section_StabilityInTheBusemannPettyProblemAndHyperplaneInequalities}

Intersection bodies played an important role in the solution of the Busemann-Petty problem, which can be formulated as follows. Given two origin-sym\-met\-ric convex bodies $K$ and $L$ in $\Rn$ such that for every $\xi \in \s^{n-1}$
$$ |K\cap \xi^{\perp}| \leq |L \cap \xi^{\perp}| \, , $$
does it follow that 
$$ |K| \leq |L|? $$
The answer is affirmative for $n\leq 4$ and negative for $n\geq 5$. This problem, posed in 1956 in \cite{BusemannPetty1956}, was solved in the late 90's as a result of a sequence of papers %\cite{Ba,Bo,Ga1,Ga2,GardnerKoldobskySchlumprecht1999,Gi,K1,K2,LR,Lu,Pa,Zh1,Zh2}
\cite{LR,Ba,Lu,Gi,Bo,Pa,Ga1,Ga2,Zh1,K1,K2,GardnerKoldobskySchlumprecht1999,Zh2}, see \cite{Koldobsky2005}, p. 3-5, for the history of the solution. One of the main steps in the solution was the connection established by E. Lutwak in \cite{Lu} between this problem and intersection bodies: For an intersection body $K$ and any star body $L$ the Busemann-Petty problem has a positive answer. For any origin-symmetric convex body $L$ that is not an intersection body, one can construct a body $K$, giving together with $L$ a counterexample. The complex version of this problem was solved in \cite{KoldobskyKonigZymonopoulou2008}.

The Busemann-Petty problem in $\K^n$ can be formulated as follows: Given two origin-symmetric $\kappa$-balanced convex bodies $K$ and $L$ in $\R^{\kappa n}$ such that $ |K\cap H_{\xi}| \leq |L \cap H_{\xi}| $, for every $\xi \in \s^{\kappa n-1}$. Does it follow that $|K| \leq |L|$? We prove below that the answer is affirmative in the following cases: (i) $n=2$, $\kappa \in \N$, (ii) $n=3$, $\kappa \leq 2$, (iii) $n=4$, $\kappa=1$, and negative for any other values of $n$ and $\kappa$, see also \cite{rubin2010}. The solution uses a connection with intersection bodies in $\K^n$, analogous to Lutwak's connection: If $K$ is an 
intersection body in $\K^n$ and $L$ is any star body in $\K^n$, then the Busemann-Petty problem in $\K^n$ has an affirmative answer. If there exists an origin-symmetric convex body $L$ in $\K^n$ that is not an intersection body in $\K^n$, then one can construct another origin-symmetric convex body $K$ in $\K^n$, so that the pair of bodies $K, L$ provides a counterexample.

A. Zvavitch generalized the Busemann-Petty problem in $\R^n$ to arbitrary measures in place of volume and proved that the answer is affirmative for $n\leq 4$ and negative for $n\geq 5$, see \cite{Zvavitch2005}. The complex version of this result was proved in \cite{Zymonopoulou2008}. In this section we extend Zvavitch's result to $\K^n$ and consider the associated stability question as well as the stability in the Busemann-Petty problem in $\K^n$. Stability in the original Busemann-Petty problem was established in \cite{MR2836117}, for the complex version in \cite{koldobsky2011} and for arbitrary measures in \cite{MR2891246}, other stability results include \cite{MR3022986, MR3009547}.

We start with the stability consideration in the Busemann-Petty problem in $\K^n$.

\begin{pr}\label{pr_sfbpikn}
Let $K, L$ be origin-symmetric star bodies in $\K^n$ and let $\epsilon>0$. Suppose $K$ is an intersection body in $\K^n$ and for every $\xi \in \s^{\kappa n -1}$
$$ |K\cap H_{\xi}| \leq |L \cap H_{\xi}| + \epsilon \, , $$
then
$$ |K|^{\frac{n-1}{n}} \leq |L|^{\frac{n-1}{n}} + \epsilon \, \frac{|B^{\kappa n}_2|^{\frac{n-1}{n}}}{|B^{\kappa n-\kappa}_2|} \, .$$
\end{pr}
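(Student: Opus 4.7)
The plan is to exploit the fact that $K$ is an intersection body in $\K^n$ by testing the pointwise hypothesis against the representing measure. By Definition \ref{df_ib} there exists a finite $\kappa$-invariant Borel measure $\mu$ on $\s^{\kappa n-1}$ with
\begin{equation*}
\int_{\s^{\kappa n-1}} \|x\|_K^{-\kappa} f(x)\,dx = \int_{\s^{\kappa n-1}} \RT^{\kappa} f(x)\, d\mu(x)
\end{equation*}
for every $f\in C_{\kappa}(\s^{\kappa n-1})$. I would integrate the pointwise inequality $|K\cap H_\xi|\le |L\cap H_\xi|+\epsilon$ with respect to $d\mu(\xi)$ and rewrite both sides using equation (\ref{eq_var}).

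Inserting $f=\|\cdot\|_L^{-\kappa n+\kappa}$, which is continuous and $\kappa$-invariant on $\s^{\kappa n-1}$, gives
\begin{equation*}
\int_{\s^{\kappa n-1}} |L\cap H_\xi|\, d\mu(\xi) = \frac{1}{\kappa(n-1)}\int_{\s^{\kappa n-1}} \|x\|_K^{-\kappa}\|x\|_L^{-\kappa(n-1)}\, dx,
\end{equation*}
and the analogous identity with $L=K$ reduces by polar coordinates to $\frac{n}{n-1}|K|$. Applying H\"older's inequality on the sphere with exponents $n$ and $n/(n-1)$ bounds the mixed integral by $(\kappa n|K|)^{1/n}(\kappa n|L|)^{(n-1)/n}$, so after integrating the hypothesis I obtain
\begin{equation*}
\tfrac{n}{n-1}\,|K| \;\le\; \tfrac{n}{n-1}\,|K|^{1/n}|L|^{(n-1)/n} + \epsilon\,\mu(\s^{\kappa n-1}).
\end{equation*}

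To finish I need a sharp upper bound on the total mass $\mu(\s^{\kappa n-1})$. Plugging $f\equiv 1$ into the defining identity yields $|\s^{\kappa n-\kappa-1}|\,\mu(\s^{\kappa n-1}) = \int\|x\|_K^{-\kappa}\,dx$, and a second application of H\"older bounds the right-hand side by $|\s^{\kappa n-1}|^{(n-1)/n}(\kappa n|K|)^{1/n}$. Substituting this into the previous display and dividing through by $\frac{n}{n-1}|K|^{1/n}$ produces
\begin{equation*}
|K|^{(n-1)/n} \;\le\; |L|^{(n-1)/n} + \epsilon\cdot \tfrac{n-1}{n}(\kappa n)^{1/n}\cdot \frac{|\s^{\kappa n-1}|^{(n-1)/n}}{|\s^{\kappa n-\kappa-1}|}.
\end{equation*}

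The main obstacle is nothing conceptual but the bookkeeping in this last step: one must verify, using $|B_2^m|=|\s^{m-1}|/m$, that $\tfrac{n-1}{n}(\kappa n)^{1/n}\cdot|\s^{\kappa n-1}|^{(n-1)/n}/|\s^{\kappa n-\kappa-1}|$ simplifies exactly to $|B_2^{\kappa n}|^{(n-1)/n}/|B_2^{\kappa n-\kappa}|$. Everything upstream is forced by the intersection-body definition and two routine applications of H\"older's inequality, and the simultaneous choice of the two H\"older pairs is what produces the asymmetric exponents $\tfrac{n-1}{n}$ on the volumes that appear in the conclusion.
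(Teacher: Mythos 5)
Your proposal is correct and follows essentially the same route as the paper's proof: integrate the sectional hypothesis against the representing measure from Definition~\ref{df_ib}, apply H\"older once to the mixed integral $\int\|x\|_K^{-\kappa}\|x\|_L^{-\kappa(n-1)}dx$ and once (with the test function $f\equiv 1$) to bound the total mass of the measure, then reduce the constants via $|B_2^m|=|\s^{m-1}|/m$. The final bookkeeping does simplify exactly to $|B_2^{\kappa n}|^{(n-1)/n}/|B_2^{\kappa n-\kappa}|$, as you anticipated.
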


\begin{proof}
By (\ref{eq_var}) the inequality for sections can be written as
$$ \RT^{\kappa}(\|\cdot\|_K^{-\kappa n + \kappa})(\xi) \leq  \RT^{\kappa}(\|\cdot\|_L^{-\kappa n + \kappa})^{\wedge}(\xi) + \epsilon \, \kappa (n - 1)\, .$$ 
Let $\nu$ be the measure which corresponds to the body $K$ by Definition \ref{df_ib}. Integrating the above inequality over the sphere with respect to $\nu$ and applying the equality condition of Definition \ref{df_ib}, yields
$$ \int\limits_{\s^{\kappa n-1}} \|x\|^{-\kappa n}_K dx \leq \int\limits_{\s^{\kappa n-1}} \|x\|^{-\kappa}_K \|x\|_L^{-\kappa n + \kappa} dx + \epsilon \kappa (n - 1) \int\limits_{\s^{\kappa n-1}} d\nu(\xi) \, .$$
Applying H{\"o}lder's inequality and using polar formula for the volume, gives
$$ \kappa n |K| \leq \kappa n |K|^{\frac{1}{n}} |L|^{\frac{n-1}{n}} + \epsilon \kappa (n - 1) \int\limits_{\s^{\kappa n-1}} d\nu(\xi) \, . $$
The spherical Radon transform on $\K^n$ of the constant function one, is the constant function with value $|\s^{\kappa n -\kappa-1}|$. Using the equality condition of Definition \ref{df_ib} and H{\"o}lder's inequality, we obtain
\begin{align*}
	\int\limits_{\s^{\kappa n -1}} d\nu(\xi)
	&= \frac{1}{|\s^{\kappa n -\kappa-1}|} \int\limits_{\s^{\kappa n -1}} \RT^{\kappa} 1(\xi) d\nu(\xi) \\
	&= \frac{1}{|\s^{\kappa n -\kappa-1}|} \int\limits_{\s^{\kappa n -1}} \|x\|_K^{-\kappa} dx \\
	&\leq \frac{1}{|\s^{\kappa n -\kappa-1}|} \kappa n |K|^{\frac{1}{n}} |B_2^{\kappa n}|^{\frac{n-1}{n}}  \, .
\end{align*}
Altogether, we have
$$ |K|^{\frac{n-1}{n}} \leq |L|^{\frac{n-1}{n}} + \epsilon \, \frac{|B^{\kappa n}_2|^{\frac{n-1}{n}}}{|B^{\kappa n-\kappa}_2|} \, . $$
\end{proof}

\begin{pr}\label{pr_nbp}
Suppose that $L$ be an origin-symmetric convex body in $\K^n$ that is not an intersection body in $\K^n$. Then there is an origin-symmetric convex body $K$ in $\K^n$ satisfying 
$$ |K\cap H_{\xi}| \leq |L \cap H_{\xi}| \, , $$
for every $\xi \in \s^{\kappa n -1}$, but
$$ |K| > |L|  \, .$$
\end{pr}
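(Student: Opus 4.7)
The plan is to follow Lutwak's classical construction, adapted to the $\kappa$-invariant setting via the Fourier characterization of Theorem \ref{th_kibpdd}. Since $L$ is not an intersection body in $\K^n$, Theorem \ref{th_kibpdd} asserts that $\|\cdot\|_L^{-\kappa}$ is not a positive definite distribution on $\R^{\kappa n}$. The class of $\kappa$-balanced origin-symmetric convex intersection bodies in $\K^n$ is closed in the Hausdorff metric, since positive definiteness of $\|\cdot\|_K^{-\kappa}$ survives distributional limits, so I would first approximate $L$ from outside by a smooth, strictly convex, $\kappa$-balanced origin-symmetric convex body $\tilde L\supseteq L$ that is still not an intersection body in $\K^n$ (averaging a smooth outer approximation of $L$ over $\SO(\kappa)$ ensures $\kappa$-balance). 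By \cite[Lemma 3.16]{Koldobsky2005}, $(\|\cdot\|_{\tilde L}^{-\kappa})^{\wedge}$ is then a continuous, $\kappa$-invariant function on $\R^{\kappa n}\setminus\{0\}$, homogeneous of degree $-\kappa n+\kappa$; by Theorem \ref{th_kibpdd} it must be strictly negative on some $\kappa$-invariant open set $\Omega\subset\s^{\kappa n-1}$.

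Next I would pick an infinitely smooth, even, $\kappa$-invariant, non-negative function $\phi\not\equiv 0$ on $\s^{\kappa n-1}$ with $\operatorname{supp}\phi\subset\Omega$, and use the Fourier inversion $(\varphi^{\wedge})^{\wedge}=(2\pi)^{\kappa n}\varphi$ together with Lemma \ref{lemma_rfts} to produce an even, $\kappa$-invariant, smooth function $g$ on $\s^{\kappa n-1}$ satisfying $\RT^{\kappa}g=-\phi$. I would define a candidate body by
\[
\|\xi\|_K^{-\kappa n+\kappa}=\|\xi\|_{\tilde L}^{-\kappa n+\kappa}+\epsilon\,g(\xi),\qquad \xi\in\s^{\kappa n-1},
\]
extended homogeneously. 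For $\epsilon>0$ sufficiently small, the smoothness and strict convexity of $\tilde L$ force $K$ to be an origin-symmetric $\kappa$-balanced convex body. From (\ref{eq_var}),
\[
|K\cap H_\xi|=|\tilde L\cap H_\xi|-\frac{\epsilon\,\phi(\xi)}{\kappa n-\kappa}\le|\tilde L\cap H_\xi|,
\]
while a first-order expansion of $\rho_K^{\kappa n}=(\rho_{\tilde L}^{\kappa n-\kappa}+\epsilon g)^{\kappa n/(\kappa n-\kappa)}$ yields
\[
|K|=|\tilde L|+\frac{\epsilon}{\kappa n-\kappa}\int_{\s^{\kappa n-1}}g(\theta)\|\theta\|_{\tilde L}^{-\kappa}\,d\theta+O(\epsilon^2).
\]
Parseval on the sphere (Lemma \ref{lemma_ParsevalOnTheSphere}) combined with Lemma \ref{lemma_rfts} converts this integral into a positive multiple of $-\int_\Omega\phi\cdot(\|\cdot\|_{\tilde L}^{-\kappa})^{\wedge}\,d\theta>0$, the strict positivity coming from $\phi\geq 0$, $\phi\not\equiv 0$ and the strict negativity of the Fourier transform on $\Omega\supset\operatorname{supp}\phi$. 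Hence $|K|>|\tilde L|\ge|L|$ for small $\epsilon$.

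The section inequality above compares $K$ with $\tilde L$ rather than with $L$, and closing this gap is the main technical point. By choosing the Hausdorff distance between $\tilde L$ and $L$ small compared to the $\epsilon$-deficit $\epsilon\phi/(\kappa n-\kappa)$, the excess $|\tilde L\cap H_\xi|-|L\cap H_\xi|$ can be dominated, and any remaining slack will be absorbed by a small homothety $K\mapsto(1-\eta)K$ chosen so that the $O(\epsilon)$ volume margin $|K|-|L|$ still survives. The hard part will be this three-parameter coordination --- approximation error, perturbation size $\epsilon$, and dilation parameter $\eta$ --- together with the check that convexity is preserved along the way; this is standard but requires careful bookkeeping.
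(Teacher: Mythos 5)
Your core construction --- perturbing the $(-\kappa n+\kappa)$-homogeneous extension of the norm by an $\epsilon$-multiple of a function $g$ whose spherical Radon transform is $-\phi$, with $\phi\ge 0$ smooth, $\kappa$-invariant and supported where $(\|\cdot\|^{-\kappa}_{\tilde L})^\wedge<0$ --- is essentially the paper's construction (the paper writes it as $\|x\|_K^{-\kappa n+\kappa}=\|x\|_L^{-\kappa n+\kappa}-\epsilon\,g(x/|x|)|x|^{-\kappa n+\kappa}$ with $g\cdot r^{-\kappa n+\kappa}=(f\cdot r^{-\kappa})^\wedge$, which by Lemma \ref{lemma_rfts} is the same perturbation up to constants and sign conventions). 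For the volume inequality you use a first-order expansion of $\rho_K^{\kappa n}$ together with Parseval; the paper instead multiplies the Fourier inequality (\ref{eq_ifft}) by $(\|\cdot\|^{-\kappa}_L)^\wedge$, integrates over the sphere, and applies Parseval followed by H\"older. The paper's route is sharper: it gives $|K|>|L|$ exactly, with no $O(\epsilon^2)$ bookkeeping, whereas yours requires a second check that the $O(\epsilon^2)$ term does not swamp the linear gain.

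The genuine gap is the outer-approximation device combined with the ``three-parameter coordination'' you defer. By comparing $K$ with $\tilde L\supseteq L$ you lose the section inequality at every $\xi\notin\operatorname{supp}\phi$, where the $\epsilon$-deficit vanishes but $|\tilde L\cap H_\xi|-|L\cap H_\xi|>0$. The homothety $K\mapsto(1-\eta)K$ can in principle repair this, but the coordination you flag is not routine: the volume margin $\epsilon\, C(\tilde L,\phi)$ depends on $\tilde L$ and $\phi$, both of which in your scheme must be chosen before you learn how small the excess $|\tilde L\cap H_\xi|-|L\cap H_\xi|$ needs to be, and as $\tilde L\to L$ there is no a priori lower bound on $C(\tilde L,\phi)$ unless you first fix $\phi$ by a separate argument exploiting the distributional non-positive-definiteness of $\|\cdot\|_L^{-\kappa}$. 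The paper avoids this entirely by invoking Lemma 4.10 of \cite{Koldobsky2005} at the outset to replace $L$ by an infinitely smooth body with strictly positive curvature, and then building $K$ as a direct perturbation of that body --- no intermediate $\tilde L$, no gap to close, no homothety. If you do want to keep an approximation step, it should be done from the inside ($\tilde L\subseteq L$), which makes the section inequality $|K\cap H_\xi|\le|\tilde L\cap H_\xi|\le|L\cap H_\xi|$ immediate and removes the need for the dilation, leaving only the volume-margin comparison to control.
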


\begin{proof}
By Lemma 4.10 in \cite{Koldobsky2005} it is enough to prove the result for infinitely smooth bodies with strictly positive curvature. From our assumptions about the body $L$, it follows that the Fourier transform of $\|\cdot\|^{-\kappa}_L$ is an infinitely-smooth $\kappa$-invariant function on $\R^n\setminus \{0\}$, homogeneous of degree $-\kappa n + \kappa$. Moreover, this function is negative on some open $\kappa$-balanced subset $\Omega$ of the sphere $\s^{\kappa n-1}$. Choose a smooth non-negative $\kappa$-invariant function $f$ on $\s^{\kappa n-1}$ whose  support is contained in $\Omega$ and extend $f$ to a $\kappa$-invariant homogeneous function $f\left(\frac{x}{|x|}\right) |x|^{-\kappa}$ of degree $-\kappa$ on $\R^{\kappa n}$. The Fourier transform of this extension is an infinitely smooth $\kappa$-invariant function on $\R^{\kappa n}\setminus\{0\}$,  homogeneous of degree $-\kappa n+\kappa$:
$$ \left( f\left(\frac{x}{|x|}\right) |x|^{-\kappa} \right)^{\wedge} (y) = g\left(\frac{y}{|y|}\right) |y|^{-\kappa n+\kappa}  ,$$
with $g\in C^{\infty}(\s^{\kappa n-1})$. Since $f$ is $\kappa$-invariant, so is $g$. Define an origin-symmetric body $K$ in $K^n$ by
$$ \|x\|_K^{-\kappa n+\kappa}  = \|x\|_L^{-\kappa n+\kappa} -\epsilon g\left(\frac{x}{|x|}\right) |x|^{-\kappa n+\kappa} \, ,$$
for some $\epsilon >0$, small enough to ensure that the body $K$ is convex. Taking the Fourier transform of both sides in the above equation, yields 
\begin{align}
(\|\cdot\|_K^{-\kappa n+\kappa})^{\wedge}(\xi)  
	&= (\|\cdot\|_L^{-\kappa n+\kappa})^{\wedge}(\xi) -\epsilon (2\pi)^{\kappa n} f(\xi) \label{eq_ifft} \\ 
	&\leq (\|\cdot\|_L^{-\kappa n+\kappa})^{\wedge}(\xi) \, , \nonumber
\end{align}
for $\xi \in \s^{\kappa n -1}$. It follows by Theorem \ref{thm_vos}, that for $\xi \in \s^{\kappa n -1}$
$$ |K\cap H_{\xi}| \leq |L \cap H_{\xi}| \, .$$
Next, multiply (\ref{eq_ifft}) by $(\|\cdot\|^{-\kappa}_L)^{\wedge}$ and integrate over the sphere $\s^{\kappa n -1}$. We obtain
\begin{align*}
\int_{\s^{\kappa n -1}} (\|\cdot\|_K^{-\kappa n+\kappa})^{\wedge}(\xi) (\|\cdot\|^{-\kappa}_L)^{\wedge}(\xi) d\xi 
	&= \int_{\s^{\kappa n -1}} (\|\cdot\|_L^{-\kappa n+\kappa})^{\wedge}(\xi) (\|\cdot\|^{-\kappa}_L)^{\wedge}(\xi) d\xi \\
	&- \epsilon (2\pi)^{\kappa n} \int_{\s^{\kappa n -1}} f(\xi) (\|\cdot\|^{-\kappa}_L)^{\wedge}(\xi) d\xi \\
	&>  \int_{\s^{\kappa n -1}} (\|\cdot\|_L^{-\kappa n+\kappa})^{\wedge}(\xi) (\|\cdot\|^{-\kappa}_L)^{\wedge}(\xi) d\xi \, .
\end{align*}
since $(\|\cdot\|^{-\kappa}_L)^{\wedge} <0$ on the support of $f$. This inequality, by means of Parseval's formula on the sphere, see Lemma \ref{lemma_ParsevalOnTheSphere}, H{\"o}lder's inequality and the polar formula for the volume, results in 
$$ |K| > |L| \, ,$$
proving the claim.
\end{proof}

Propositions \ref{pr_sfbpikn} and \ref{pr_nbp} imply that the Busemann-Petty problem in $\K^n$ has a positive answer if and only if every origin-symmetric convex body in $\K^n$ is an intersection body in $\K^n$. Hence by means of Corollary \ref{co_ib}, we obtain
\begin{theorem}\label{th_bp}
The Busemann-Petty problem in $\K^n$ has an affirmative answer only in the following cases: (i) $n=2$, $\kappa \in \N$, (ii) $n=3$, $\kappa \leq 2$, (iii) $n=4$, $\kappa=1$, and a negative answer for any other values of $n$ and $\kappa$. 
\end{theorem}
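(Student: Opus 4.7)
The plan is to observe that Theorem \ref{th_bp} is essentially a packaging of three results already proved in the paper, and that no new analytical work is required. First I would apply Proposition \ref{pr_sfbpikn} with $\epsilon = 0$: if $K$ is an intersection body in $\K^n$ and $L$ is any star body in $\K^n$ with $|K\cap H_\xi|\leq |L\cap H_\xi|$ for every $\xi\in\s^{\kappa n-1}$, then $|K|^{(n-1)/n}\leq |L|^{(n-1)/n}$, hence $|K|\leq |L|$. This handles the positive direction: in any dimension pair $(n,\kappa)$ for which every origin-symmetric convex body in $\K^n$ happens to be an intersection body in $\K^n$, the Busemann-Petty problem in $\K^n$ has an affirmative answer.

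Next I would use Proposition \ref{pr_nbp} for the negative direction: if some origin-symmetric convex body $L$ in $\K^n$ fails to be an intersection body in $\K^n$, one can perturb $L$ using a $\kappa$-invariant bump supported where the Fourier transform of $\|\cdot\|_L^{-\kappa}$ is negative, producing an origin-symmetric convex body $K$ in $\K^n$ with $|K\cap H_\xi|\leq |L\cap H_\xi|$ for every $\xi$ but $|K|>|L|$. Combining the two directions yields the Lutwak-type equivalence: the Busemann-Petty problem in $\K^n$ has an affirmative answer if and only if every origin-symmetric convex body in $\K^n$ is an intersection body in $\K^n$.

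Finally I would quote Corollary \ref{co_ib}, which enumerates precisely the triples $(n,\kappa)$ for which the latter property holds, namely $n=2$ with arbitrary $\kappa\in\N$, $n=3$ with $\kappa\leq 2$, and $n=4$ with $\kappa=1$. In all remaining cases Corollary \ref{co_ib} produces an explicit origin-symmetric $\kappa$-balanced convex body (essentially a suitable $B_{q,\kappa}$ with $q>2$, or $B_q^{\kappa n}$ with $q>2$ when $n\geq 5$) which is not an intersection body in $\K^n$; feeding this body into Proposition \ref{pr_nbp} delivers an explicit counterexample to the Busemann-Petty problem in $\K^n$ in those dimensions.

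There is no genuine obstacle here since the analytical heavy lifting, the Fourier characterization of intersection bodies in $\K^n$ (Theorem \ref{th_kibpdd}), the sharp classification of convex intersection bodies via the second derivative test (Theorem \ref{th_kib}, Proposition \ref{pr_nkib}, Corollary \ref{co_ib}), and the stability/perturbation Propositions \ref{pr_sfbpikn} and \ref{pr_nbp}, has already been carried out. The only thing to take care of in writing the proof is to verify that the list of triples $(n,\kappa)$ produced by Corollary \ref{co_ib} matches verbatim the list claimed in the statement of Theorem \ref{th_bp}, and to record that in all other cases a convex counterexample $L$ is available so that Proposition \ref{pr_nbp} applies to yield the accompanying convex body $K$.
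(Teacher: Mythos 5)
Your proposal matches the paper's argument exactly: the paper combines Proposition~\ref{pr_sfbpikn} (with $\epsilon=0$) for the affirmative direction, Proposition~\ref{pr_nbp} for the negative direction, and Corollary~\ref{co_ib} to identify the triples $(n,\kappa)$. Nothing to add.
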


\begin{re}
It is worth mentioning that as special cases, Theorem \ref{th_bp} implies an affirmative answer to the lower-dimensional Busemann-Petty problems in $\R^6$ for sections of dimension $3$ and in $\C^4$ for sections of complex dimension $2$ for convex bodies with additional symmetries. Recall that the lower-dimensional Busemann-Petty problem in $\R^n$ with $n\geq 5$ is open for sections of dimension $k=2,3$ and it is open in $\C^n$ with $n\geq 4$ for sections of complex dimension $k=2$.\footnote{The proof in \cite{MR3149693} can be easily adjusted to show the negative answer to the lower-dimensional Busemann-Petty problem in $\C^n$ for sections of complex dimension $3\leq k \leq n-2$.} 
\end{re}

The question of stability in Busemann-Petty problems leads to hyperplane inequalities. These are related to the famous Hyperplane Conjecture. This conjecture can be formulated as follows. Does there exist an absolute constant $C$ so that for any origin-symmetric convex body $K$ in $\R^n$
$$ |K|^{\frac{n-1}{n}} \leq C \max\limits_{\xi\in\s^{n-1}} |K\cap \xi^{\perp}| \, ?$$
Here $\xi^{\perp}$ stands for the central hyperplane perpendicular to $\xi$. This problem remains open. The best known estimate $C \sim n^{1/4}$ is due to B. Klartag \cite{klartag2006}, who slightly improved the previous estimate of J. Bourgain \cite{bourgain1991}. 

Interchanging the roles of $K$ and $L$ in Proposition \ref{pr_sfbpikn} and letting 
$$\epsilon = \max_{\xi\in\s^{\kappa n -1}} \left| |K\cap H_{\xi}| - |L \cap H_{\xi}| \right| \, ,$$ 
we obtain the corresponding volume difference inequality.
\begin{co}
Suppose $K, L$ are intersection bodies in $\K^n$, then
$$ \left| |K|^{\frac{n-1}{n}} - |L|^{\frac{n-1}{n}} \right| \leq  \frac{|B^{\kappa n}_2|^{\frac{n-1}{n}}}{|B^{\kappa n-\kappa}_2|} \max_{\xi\in\s^{\kappa n -1}} \left| |K\cap H_{\xi}| - |L \cap H_{\xi}| \right| \, .$$
\end{co}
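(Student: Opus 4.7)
The plan is to deduce the inequality directly from Proposition \ref{pr_sfbpikn} by applying it symmetrically to the pair $(K,L)$ and to the pair $(L,K)$. Set
$$ \epsilon := \max_{\xi\in\s^{\kappa n -1}} \left| |K\cap H_{\xi}| - |L \cap H_{\xi}| \right| \, .$$
By the very definition of $\epsilon$, for every $\xi\in\s^{\kappa n-1}$ we have simultaneously
$$ |K\cap H_{\xi}| \leq |L \cap H_{\xi}| + \epsilon \quad \text{and} \quad |L\cap H_{\xi}| \leq |K \cap H_{\xi}| + \epsilon \, .$$

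Next I would invoke Proposition \ref{pr_sfbpikn} twice. Since $K$ is an intersection body in $\K^n$, the first inequality above together with Proposition \ref{pr_sfbpikn} gives
$$ |K|^{\frac{n-1}{n}} \leq |L|^{\frac{n-1}{n}} + \epsilon \, \frac{|B^{\kappa n}_2|^{\frac{n-1}{n}}}{|B^{\kappa n-\kappa}_2|} \, .$$
Interchanging the roles of $K$ and $L$ and using that $L$ is also an intersection body in $\K^n$, the second inequality above together with Proposition \ref{pr_sfbpikn} yields
$$ |L|^{\frac{n-1}{n}} \leq |K|^{\frac{n-1}{n}} + \epsilon \, \frac{|B^{\kappa n}_2|^{\frac{n-1}{n}}}{|B^{\kappa n-\kappa}_2|} \, .$$

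Combining these two bounds produces the claimed estimate on $||K|^{(n-1)/n} - |L|^{(n-1)/n}|$. There is no real obstacle here: the argument is a purely formal consequence of Proposition \ref{pr_sfbpikn}, and the only point worth emphasising is that the hypothesis of that proposition is symmetric in the sense that it may be applied with either body in the role of the intersection body, which is legitimate precisely because both $K$ and $L$ are assumed to be intersection bodies in $\K^n$.
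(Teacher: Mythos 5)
Your argument is correct and matches the paper's own (one-line) proof exactly: the paper likewise sets $\epsilon$ to the maximum of the section-volume differences and applies Proposition \ref{pr_sfbpikn} with $K$ and $L$ interchanged, which is legitimate since both are assumed to be intersection bodies in $\K^n$.
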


\noindent
Setting $L=\delta B^{\kappa n}_2$ and letting $\delta$ go to zero, we obtain:
\begin{co}\label{co_hibp}
Suppose $K$ is an intersection body in $\K^n$, then
$$ |K|^{\frac{n-1}{n}} \leq  \frac{|B^{\kappa n}_2|^{\frac{n-1}{n}}}{|B^{\kappa n-\kappa}_2|} \max_{\xi\in\s^{\kappa n -1}} |K\cap H_{\xi}| \, .$$
\end{co}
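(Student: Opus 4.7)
The plan is to deduce this hyperplane inequality directly from the preceding volume-difference corollary by specializing one of the two bodies to a small Euclidean ball. Specifically, I would apply the corollary to the pair $K$ and $L = \delta B^{\kappa n}_2$, and then let $\delta \to 0^+$.

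The first step is to verify that $L = \delta B^{\kappa n}_2$ qualifies as an intersection body in $\K^n$, since the previous corollary requires \emph{both} bodies to be intersection bodies. The Euclidean ball is invariant under the full group $\SO(\kappa n)$, hence certainly $\kappa$-balanced, and scaling by $\delta$ preserves this. By Theorem \ref{th_kibpdd}, it is enough to check that $\|\cdot\|_{\delta B^{\kappa n}_2}^{-\kappa} = \delta^{\kappa} |\cdot|_2^{-\kappa}$ is positive definite as a distribution on $\R^{\kappa n}$. Using the explicit formula for the Fourier transform of powers of the Euclidean norm recalled just before Lemma \ref{le_fte}, with $p = -\kappa$ and ambient dimension $\kappa n$, one finds
\[
(|\cdot|_2^{-\kappa})^{\wedge}(y) \;=\; \frac{\pi^{\kappa n/2}\, 2^{\kappa n - \kappa}\, \Gamma((\kappa n - \kappa)/2)}{\Gamma(\kappa/2)}\, |y|_2^{-\kappa n + \kappa},
\]
which is a strictly positive function on $\R^{\kappa n}\setminus\{0\}$ for $n\geq 2$. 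So $\delta B^{\kappa n}_2$ is an intersection body in $\K^n$.

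Next, the preceding corollary applied to $K$ and $\delta B^{\kappa n}_2$ yields, using $|\delta B^{\kappa n}_2|^{(n-1)/n} = \delta^{\kappa(n-1)} |B^{\kappa n}_2|^{(n-1)/n}$ and $|\delta B^{\kappa n}_2 \cap H_{\xi}| = \delta^{\kappa(n-1)}|B^{\kappa n - \kappa}_2|$,
\[
\Bigl|\, |K|^{\frac{n-1}{n}} - \delta^{\kappa(n-1)} |B^{\kappa n}_2|^{\frac{n-1}{n}}\Bigr| \;\leq\; \frac{|B^{\kappa n}_2|^{\frac{n-1}{n}}}{|B^{\kappa n-\kappa}_2|} \max_{\xi\in\s^{\kappa n -1}} \Bigl|\, |K\cap H_{\xi}| - \delta^{\kappa(n-1)} |B^{\kappa n-\kappa}_2|\, \Bigr|.
\]
Since $\xi \mapsto |K\cap H_\xi|$ is continuous on the compact sphere $\s^{\kappa n -1}$, sending $\delta \to 0^+$ makes both of the $\delta$-dependent terms vanish, and the maximum on the right converges uniformly to $\max_\xi |K\cap H_\xi|$. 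This yields precisely the stated inequality.

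There is no real obstacle here beyond the bookkeeping just sketched: the argument is a clean one-step reduction to the volume-difference corollary. The only substantive point is confirming that $B^{\kappa n}_2$ belongs to the class of intersection bodies in $\K^n$, and this follows at once from the Fourier characterization in Theorem \ref{th_kibpdd} together with the standard computation of the Fourier transform of the Euclidean norm.
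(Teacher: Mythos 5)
Your proposal is correct and matches the paper's own route exactly: the paper derives Corollary~\ref{co_hibp} precisely by substituting $L=\delta B^{\kappa n}_2$ into the preceding volume-difference corollary and letting $\delta\to 0$. You have merely spelled out two steps the paper leaves implicit — that the Euclidean ball is an intersection body in $\K^n$ (via the Fourier formula for $|\cdot|_2^{-\kappa}$ and Theorem~\ref{th_kibpdd}), and the explicit scaling of the volume and section terms as $\delta\to 0$ — both of which are accurate.
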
 
%\noindent
Recall that $ e^{-\frac{\kappa}{2}} < \frac{|B^{\kappa n}_2|^{\frac{n-1}{n}}}{|B^{\kappa n-\kappa}_2|} < 1$. The upper bound follows easily from the log-convexity of the Gamma function. For the lower bound see Lemma 2.1 in \cite{MR1796717}.

For $\kappa=1,2$, Corollary \ref{co_hibp} reduces to the previously known hyperplane inequalities corresponding to the stability problem in the original \cite{MR2836117} and in the complex version \cite{koldobsky2011} of Busemann-Petty problem.

Now we turn to the Busemann-Petty problem in $\K^n$ for arbitrary measures. 
Let $f, g$ be non-negative even continuous functions on $\R^{\kappa n} \setminus \{0\}$, that are locally-integrable on every line through the origin. Let $\mu$ be an absolutely continuous measure with respect to the Lebesgue measure on $\R^{\kappa n}$ with the density $f$. Define a measure $\gamma$ on $H_{\xi}$, for any $\xi\in\s^{\kappa n -1}$, by
$$ \gamma(B) = \int_B g(x) dx \, ,$$
for any bounded Borel set $B\subset H_{\xi}$. The Busemann-Petty problem in $\K^n$ for arbitrary measures can be formulated as follows:

Given $K, L$ two origin-symmetric $\kappa$-balanced convex bodies in $\R^{\kappa n}$ satisfying 
$$ \gamma(K\cap H_{\xi}) \leq \gamma(L\cap H_{\xi}) \, ,$$
for every $\xi\in\s^{\kappa n -1}$, does it follow that
$$ \mu(K) \leq \mu(L) \, ?$$

Since we work with $\kappa$-balanced sets, we can assume that the measures $\mu, \gamma$ are $\kappa$-invariant, consequently the functions $f, g$ are $\kappa$-invariant as well. We need a polar formula for the measure of star bodies in $\K^n$ as well as for the measure of their sections.
\begin{equation}\label{eq_msrebody}
	\mu(K) = \int_K f(x) dx = \int_{\s^{\kappa n -1}} \int_0^{\|x\|_K^{-1}} f(rx) r^{\kappa n -1} dr dx \, . 
\end{equation}
By Lemma \ref{lemma_rfts}, for any $\xi\in\s^{\kappa n -1}$, we have
\begin{align}
	\gamma(K\cap H_{\xi}) 
	&= \int_{\s^{\kappa n -1}\cap H_{\xi}} \int_0^{\|x\|_K^{-1}} g(rx) r^{\kappa n-\kappa -1} dr dx \nonumber \\ 
	&= \int_{\s^{\kappa n -1}\cap H_{\xi}} \left(|x|_2^{- \kappa n + \kappa} \int_0^{\frac{|x|_2}{\|x\|_K}} g\left(\frac{rx}{|x|_2}\right) r^{\kappa n-\kappa -1} dr \right) dx \nonumber \\ 
	&= \RT^{\kappa} \left(|x|_2^{- \kappa n + \kappa} \int_0^{\frac{|x|_2}{\|x\|_K}} g\left(\frac{rx}{|x|_2}\right) r^{\kappa n-\kappa -1} dr \right) (\xi) \nonumber \\ 
	&=\frac{|S^{\kappa-1}|}{(2 \pi)^{\kappa}} \left(|x|_2^{- \kappa n + \kappa} \int_0^{\frac{|x|_2}{\|x\|_K}} g\left(\frac{rx}{|x|_2}\right) r^{\kappa n-\kappa -1} dr \right)^{\wedge}(\xi) \, . \label{eq_msresection}
\end{align}

The following elementary lemma is an analog of a lemma used by A. Zvavitch in \cite{Zvavitch2005}.
\begin{lemma}\label{lemma_elem}
Let $\kappa, n \in \N$, $\kappa \geq 1, n \geq 2$ and let $a,b \geq 0$. For non-negative integrable functions $\alpha, \beta$ on $[0, \max\{a,b\}]$ so that $t^{\kappa} \frac{\alpha(t)}{\beta(t)}$ is non-decreasing, we have
$$ a^{\kappa} \frac{\alpha(a)}{\beta(a)} \int_a^b t^{\kappa n - \kappa -1} \beta(t) dt \leq \int_a^b t^{\kappa n -1} \alpha(t) dt  \, .$$
\end{lemma}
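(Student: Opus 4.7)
The plan is to reduce the inequality to a pointwise bound on the integrand and then integrate, handling the cases $a \leq b$ and $b < a$ separately since they produce opposite-sign inequalities that happen to flip consistently with the orientation of the integral.

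First I would rewrite the monotonicity hypothesis as the pointwise inequality
$$ t^{\kappa} \frac{\alpha(t)}{\beta(t)} \geq a^{\kappa} \frac{\alpha(a)}{\beta(a)} \quad \text{for } t \geq a, \qquad t^{\kappa} \frac{\alpha(t)}{\beta(t)} \leq a^{\kappa} \frac{\alpha(a)}{\beta(a)} \quad \text{for } t \leq a.$$
Multiplying through by the non-negative weight $t^{\kappa n - \kappa - 1} \beta(t)$ clears the denominator and yields
$$ t^{\kappa n - 1} \alpha(t) \geq a^{\kappa} \frac{\alpha(a)}{\beta(a)} \, t^{\kappa n - \kappa - 1} \beta(t) \quad (t \geq a), $$
with the reverse inequality for $t \leq a$. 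This is the key pointwise estimate.

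In the case $a \leq b$, I would integrate the first inequality over $[a,b]$, which immediately gives the claimed bound. In the case $b < a$, integrating the reversed pointwise inequality over $[b,a]$ produces
$$ \int_b^a t^{\kappa n - 1} \alpha(t) \, dt \leq a^{\kappa} \frac{\alpha(a)}{\beta(a)} \int_b^a t^{\kappa n - \kappa - 1} \beta(t) \, dt, $$
and multiplying by $-1$ (equivalently, swapping limits to $\int_a^b$) flips the inequality in exactly the direction required. A minor bookkeeping point is the degenerate subcase $\beta(a) = 0$; here the left-hand side of the conclusion should be interpreted as $0$ (using the convention that the hypothesis gives no constraint, or by a limiting argument with $\beta + \varepsilon$ and sending $\varepsilon \to 0^+$), or one can simply note that if $\beta(a) = 0$ while $\alpha(a) > 0$, the ratio is $+\infty$ and the monotonicity forces $\beta \equiv 0$ on a neighborhood so the product is unambiguous.

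I do not anticipate any real obstacle here: the statement is a one-line rearrangement of a Chebyshev-type inequality, and the only subtlety is keeping track of the sign of $b - a$. No appeal to Fubini, measure-theoretic regularity, or the $\kappa$-balanced structure of the ambient space is needed; only non-negativity of $\alpha, \beta$ and the stated monotonicity of $t^{\kappa} \alpha(t)/\beta(t)$ enter.
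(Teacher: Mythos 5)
Your proof is correct and is essentially the same argument as the paper's: the paper rewrites the left-hand integrand as $t^{\kappa n-1}\alpha(t)\bigl(a^\kappa\alpha(a)/\beta(a)\bigr)\bigl(t^\kappa\alpha(t)/\beta(t)\bigr)^{-1}$ and uses monotonicity to bound the last factor by $1$, which is exactly your pointwise estimate written in one line. You are merely more explicit about the two orientations $a\le b$ and $b<a$ and the degenerate case $\beta(a)=0$, both of which the paper's compressed computation handles implicitly.
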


\begin{proof} Compute
\begin{align*}
 a^{\kappa} \frac{\alpha(a)}{\beta(a)} \int_a^b t^{\kappa n - \kappa -1} \beta(t) dt
	&= \int_a^b t^{\kappa n -1} \alpha(t) \left(a^{\kappa} \frac{\alpha(a)}{\beta(a)} \right) \left(t^{\kappa} \frac{\alpha(t)}{\beta(t)} \right)^{-1} dt \\
 	&\leq \int_a^b t^{\kappa n -1} \alpha(t) dt  \, .
\end{align*}
\end{proof}

\begin{pr}\label{pr_positive}
Let $\epsilon>0$ and let $f,g$ be even non-negative $\kappa$-invariant continuous functions on $\R^{\kappa n}\setminus \{0\}$ and so that for any fixed $x\in\s^{\kappa n -1}$, $f(tx), g(tx)$ are locally-integrable in $t$ and $t^{\kappa} \frac{f(tx)}{g(tx)}$ is a non-decreasing function in $t$. Suppose that an origin-symmetric star body $K$ in $\K^n$ has the property that $\|x\|_K^{-\kappa} \frac{f(x\|x\|_K^{-1})}{g(x\|x\|_K^{-1})}$ is a positive-definite distribution on $\R^{\kappa n}$. Then for any origin-symmetric star body $L$ in $\K^n$ satisfying
$$ \gamma(K\cap H_{\xi}) \leq \gamma(L\cap H_{\xi}) + \epsilon  \, ,$$
for every $\xi\in\s^{\kappa n -1}$, it follows that
$$ \mu(K) \leq \mu(L) + \epsilon \frac{1}{|\s^{\kappa n -\kappa-1}|} \int\limits_{\s^{\kappa n -1}} \|x\|_K^{-\kappa} \frac{f(x\|x\|_K^{-1})}{g(x\|x\|_K^{-1})} dx \, .$$ 
\end{pr}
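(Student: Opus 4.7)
The strategy is to adapt the argument of Proposition \ref{pr_sfbpikn} to the measure setting, replacing the role of $K$ being an intersection body in $\K^n$ with the positive-definiteness hypothesis, and replacing H\"older's inequality with the ray-by-ray comparison of Lemma \ref{lemma_elem}.

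For any star body $M$ in $\K^n$ and $x\in\s^{\kappa n-1}$, set
$$ F_M(x)=\int_0^{\|x\|_M^{-1}} f(tx)\,t^{\kappa n-1}\,dt, \qquad G_M(x)=\int_0^{\|x\|_M^{-1}} g(tx)\,t^{\kappa n-\kappa-1}\,dt. $$
By (\ref{eq_msrebody}) one has $\mu(M)=\int_{\s^{\kappa n-1}} F_M(x)\,dx$, and (\ref{eq_msresection}) together with the definition of the spherical Radon transform gives $\gamma(M\cap H_\xi)=\RT^{\kappa}G_M(\xi)$. The first main step is to apply Lemma \ref{lemma_elem} ray by ray: fixing $x\in\s^{\kappa n-1}$, take $\alpha(t)=f(tx)$, $\beta(t)=g(tx)$, $a=\|x\|_K^{-1}$, $b=\|x\|_L^{-1}$. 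The hypothesis on $f,g$ makes $t^\kappa\alpha(t)/\beta(t)$ non-decreasing, and the lemma is valid whether $a\leq b$ or $a>b$ (in the second case both integrals switch sign, and the same monotonicity yields the analogous bound). This produces the pointwise inequality
$$ \|x\|_K^{-\kappa}\,\frac{f(x\|x\|_K^{-1})}{g(x\|x\|_K^{-1})}\,\bigl(G_L(x)-G_K(x)\bigr) \leq F_L(x)-F_K(x). $$
Write $h(x):=\|x\|_K^{-\kappa} f(x\|x\|_K^{-1})/g(x\|x\|_K^{-1})$, an even, non-negative, continuous, $\kappa$-invariant function on $\s^{\kappa n-1}$. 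Integrating the above inequality over the sphere gives
$$ \int_{\s^{\kappa n-1}} h(x)\bigl(G_L(x)-G_K(x)\bigr)\,dx \leq \mu(L)-\mu(K). $$

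The second main step invokes the positive-definiteness assumption. Since $h$, extended to a homogeneous function of degree $-\kappa$ on $\R^{\kappa n}$, represents a positive definite $\kappa$-invariant distribution, the argument in the second half of the proof of Theorem \ref{th_kibpdd} produces a finite $\kappa$-invariant Borel measure $\nu$ on $\s^{\kappa n-1}$ satisfying
$$ \int_{\s^{\kappa n-1}} h(x)F(x)\,dx = \int_{\s^{\kappa n-1}} \RT^{\kappa}F(\xi)\,d\nu(\xi) \qquad \text{for every } F\in C_{\kappa}(\s^{\kappa n-1}). $$
Applying this with $F=G_L-G_K$ and using $\RT^{\kappa}G_M=\gamma(M\cap H_\cdot)$ together with the section hypothesis $\gamma(K\cap H_\xi)\leq\gamma(L\cap H_\xi)+\epsilon$ yields
$$ \int_{\s^{\kappa n-1}} h(x)\bigl(G_L(x)-G_K(x)\bigr)\,dx = \int_{\s^{\kappa n-1}} \bigl(\gamma(L\cap H_\xi)-\gamma(K\cap H_\xi)\bigr)\,d\nu(\xi) \geq -\epsilon\,\nu(\s^{\kappa n-1}). $$
Combining with the previous display gives $\mu(K)\leq\mu(L)+\epsilon\,\nu(\s^{\kappa n-1})$.

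It remains to identify $\nu(\s^{\kappa n-1})$ explicitly. Taking $F\equiv 1$ in the defining property of $\nu$ and using $\RT^{\kappa}1=|\s^{\kappa n-\kappa-1}|$ gives
$$ \int_{\s^{\kappa n-1}} h(x)\,dx = |\s^{\kappa n-\kappa-1}|\,\nu(\s^{\kappa n-1}), $$
which upon substitution yields precisely the stated constant $|\s^{\kappa n-\kappa-1}|^{-1}\int_{\s^{\kappa n-1}} h(x)\,dx$. The one genuinely delicate point is handling Lemma \ref{lemma_elem} when $\|x\|_K^{-1}$ and $\|x\|_L^{-1}$ are in the ``wrong'' order (i.e.\ the ray contributes positively rather than negatively to the section comparison); this is resolved by the same monotonicity argument that proves the lemma, and is the only case analysis in the proof.
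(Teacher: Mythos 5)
Your proposal is correct and follows essentially the same route as the paper's proof: you define the same auxiliary weight $h=\|\cdot\|_D^{-\kappa}$, invoke the same measure $\nu$ arising from positive-definiteness via the characterization of intersection bodies in $\K^n$, and apply Lemma~\ref{lemma_elem} ray by ray (with the same careful remark that the lemma is sign-stable when $a>b$) before combining the two estimates. The only difference is a cosmetic reordering -- the paper first integrates the Radon inequality against $\nu$ and then applies the ray-by-ray lemma, while you do the reverse -- which does not change the content of the argument.
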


\begin{proof}
Using equation (\ref{eq_msresection}), the inequality for sections can be written as
\begin{align*} 
	& \RT^{\kappa} \left(|x|_2^{- \kappa n + \kappa} \int_0^{\frac{|x|_2}{\|x\|_K}} g\left(\frac{rx}{|x|_2}\right) r^{\kappa n-\kappa -1} dr \right)(\xi)\\ 
	&\leq \RT^{\kappa} \left(|x|_2^{- \kappa n + \kappa} \int_0^{\frac{|x|_2}{\|x\|_L}} g\left(\frac{rx}{|x|_2}\right) r^{\kappa n-\kappa -1} dr \right)(\xi) +\epsilon  \, .
\end{align*}
Define an auxiliary star body $D$ by
$$ \|x\|_D^{-\kappa} = \|x\|_K^{-\kappa} \frac{f(x\|x\|_K^{-1})}{g(x\|x\|_K^{-1})} \, .$$
Note that $D$ is an even $\kappa$-balanced star body and $\|\cdot\|_D^{-\kappa}$ is positive-definite, thus $D$ is an intersection body in $\K^n$. By Definition \ref{df_ib}, there is a measure $\nu$ on $\s^{\kappa n -1}$ corresponding to the body $D$. Integrating the above inequality over the sphere with respect to the measure $\nu$ and applying the equality condition of Definition \ref{df_ib}, yields
\begin{align} 
	& \int\limits_{\s^{\kappa n -1}} \|x\|_D^{-\kappa} \int\limits_0^{\|x\|_K^{-1}} g(rx) r^{\kappa n-\kappa -1} dr  dx \label{eq_bpameqone} \\ 
	&\leq \int\limits_{\s^{\kappa n -1}} \|x\|_D^{-\kappa} \int\limits_0^{\|x\|_L^{-1}} g(rx) r^{\kappa n-\kappa -1} dr  dx +\epsilon \int\limits_{\s^{\kappa n -1}} d\nu(\xi) \nonumber \, .
\end{align}
By Lemma \ref{lemma_elem}, with $a=\|x\|_K^{-1}, b=\|x\|_L^{-1}, \alpha(r)=f(rx), \beta(r)=g(rx)$, we also have
\begin{align} 
	&\int\limits_0^{\|x\|_K^{-1}} f(rx) r^{\kappa n-1} dr - \|x\|_D^{-\kappa} \int\limits_0^{\|x\|_K^{-1}} g(rx) r^{\kappa n-\kappa -1} dr  \label{eq_bpameqtwo} \\ 
	&\leq \int\limits_0^{\|x\|_L^{-1}} f(rx) r^{\kappa n-1} dr - \|x\|_D^{-\kappa} \int\limits_0^{\|x\|_L^{-1}} g(rx) r^{\kappa n-\kappa -1} dr  \nonumber \, .
\end{align}
Integrating equation (\ref{eq_bpameqtwo}) over the sphere and adding the resulting equation to equation (\ref{eq_bpameqone}), we obtain
\begin{align*} 
	 \int\limits_{\s^{\kappa n -1}} \int\limits_0^{\|x\|_K^{-1}} f(rx) r^{\kappa n-1} dr dx \leq \int\limits_{\s^{\kappa n -1}} \int\limits_0^{\|x\|_L^{-1}} f(rx) r^{\kappa n-1} dr dx +\epsilon \int\limits_{\s^{\kappa n -1}} d\nu(\xi) \, ,
\end{align*}
which reads as
$$ \mu(K) \leq \mu(L) + \epsilon \int\limits_{\s^{\kappa n -1}} d\nu(\xi) \, .$$
Finally, since the spherical Radon transform on $\K^n$ of the constant function one, is the constant function with value $|\s^{\kappa n -\kappa-1}|$, using the equality condition of Definition \ref{df_ib}, we obtain
\begin{align*}
	\int\limits_{\s^{\kappa n -1}} d\nu(\xi)
	&= \frac{1}{|\s^{\kappa n -\kappa-1}|} \int\limits_{\s^{\kappa n -1}} \RT^{\kappa} 1(\xi) d\nu(\xi) \\
	&= \frac{1}{|\s^{\kappa n -\kappa-1}|} \int\limits_{\s^{\kappa n -1}} \|x\|_D^{-\kappa} dx \\
	&= \frac{1}{|\s^{\kappa n -\kappa-1}|} \int\limits_{\s^{\kappa n -1}} \|x\|_K^{-\kappa} \frac{f(x\|x\|_K^{-1})}{g(x\|x\|_K^{-1})} dx \, .
\end{align*}
\end{proof}

\begin{pr}\label{pr_negative}
Let $f,g$ be even strictly positive $\kappa$-invariant continuous functions on $\R^{\kappa n}\setminus \{0\}$ and so that for any fixed $x\in\s^{\kappa n -1}$, $f(tx), g(tx)$ are locally-integrable in $t$ and $t^{\kappa} \frac{f(tx)}{g(tx)}$ is a non-decreasing function in $t$. Let $l=\max\{2, \kappa-2\}$ and assume also that $g\in C^l(\R^{\kappa n} \setminus \{0\})$. Suppose $L$ is an infinitely-smooth origin-symmetric convex body in $\K^n$ with strictly positive curvature so that 
\begin{equation}\label{eq_bpamnpd}
\|x\|_L^{-\kappa} \frac{f(x\|x\|_L^{-1})}{g(x\|x\|_L^{-1})}
\end{equation}
is in $C^{\kappa n -\kappa}(\R^{\kappa n} \setminus \{0\})$ and does not represent a positive-definite distribution on $\R^{\kappa n}$. Then there is an origin-symmetric convex body $K$ in $\K^n$ satisfying
$$ \gamma(K\cap H_{\xi}) \leq \gamma(L\cap H_{\xi}) \, ,$$
for every $\xi\in\s^{\kappa n -1}$, but
$$ \mu(K) > \mu(L) \, .$$ 
\end{pr}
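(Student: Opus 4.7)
The plan is to follow the template of Proposition \ref{pr_nbp}, but to perturb $L$ at the level of the section integrand rather than directly at the level of the Minkowski functional, so that the perturbation interacts cleanly with the measure $\gamma$. I would introduce the two auxiliary $\kappa$-invariant functions
$$ F_L(x) := \|x\|_L^{-\kappa}\,\frac{f(x\|x\|_L^{-1})}{g(x\|x\|_L^{-1})}, \qquad \Phi_L(x) := |x|_2^{-\kappa n+\kappa}\!\int_0^{|x|_2/\|x\|_L}\! g\!\left(\frac{rx}{|x|_2}\right)\! r^{\kappa n-\kappa-1}\,dr, $$
homogeneous of degrees $-\kappa$ and $-\kappa n+\kappa$ respectively, so that $\gamma(L\cap H_\xi) = \frac{|S^{\kappa-1}|}{(2\pi)^\kappa}\,\hat{\Phi}_L(\xi)$ on $\s^{\kappa n-1}$ by (\ref{eq_msresection}). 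The regularity hypotheses of the proposition force $F_L\in C^{\kappa n-\kappa}(\R^{\kappa n}\setminus\{0\})$, which is enough to guarantee that $\hat F_L$ is a continuous $\kappa$-invariant function on $\R^{\kappa n}\setminus\{0\}$, homogeneous of degree $-\kappa n+\kappa$.

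Since $F_L$ is not positive definite, $\hat F_L$ is strictly negative on some open $\kappa$-balanced set $\Omega\subset\s^{\kappa n-1}$. I would fix a smooth non-negative $\kappa$-invariant bump $\phi\not\equiv 0$ supported in $\Omega$ and let $\psi$ denote the smooth $\kappa$-invariant function on the sphere determined by $\widehat{\phi\cdot r^{-\kappa}} = \psi\cdot r^{-\kappa n+\kappa}$. Then define $K$ by declaring, for each $x\in\s^{\kappa n-1}$, that $\|x\|_K^{-1}$ is the unique positive solution of
$$ \int_0^{\|x\|_K^{-1}}\! g(rx)\,r^{\kappa n-\kappa-1}\,dr \;=\; \int_0^{\|x\|_L^{-1}}\! g(rx)\,r^{\kappa n-\kappa-1}\,dr \;-\; \epsilon\,\psi(x). $$
Positivity of $g$ makes the left-hand side a strictly increasing continuous function of $\|x\|_K^{-1}$, so for $\epsilon>0$ sufficiently small the equation admits a unique positive solution close to $\|x\|_L^{-1}$; the $\kappa$-invariance of the defining relation makes $K$ $\kappa$-balanced; and the smoothness of the perturbation combined with the strict positive curvature of $L$ ensures that $K$ is convex for all small $\epsilon>0$. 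By construction $\Phi_K-\Phi_L = -\epsilon\,\psi\cdot r^{-\kappa n+\kappa}$, hence $\widehat{\Phi_K-\Phi_L}(\xi) = -\epsilon(2\pi)^{\kappa n}\phi(\xi)\le 0$ on $\s^{\kappa n-1}$, and (\ref{eq_msresection}) then yields $\gamma(K\cap H_\xi)\le\gamma(L\cap H_\xi)$ for every $\xi\in\s^{\kappa n-1}$.

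For the strict volume inequality, I would apply Lemma \ref{lemma_elem} with $a=\|x\|_L^{-1}$, $b=\|x\|_K^{-1}$, $\alpha(r)=f(rx)$, $\beta(r)=g(rx)$, integrate the resulting pointwise inequality over $\s^{\kappa n-1}$, and invoke (\ref{eq_msrebody}) to arrive at
$$ \int_{\s^{\kappa n-1}} F_L(x)\bigl(\Phi_K(x)-\Phi_L(x)\bigr)\,dx \;\le\; \mu(K)-\mu(L). $$
Parseval's formula on the sphere (Lemma \ref{lemma_ParsevalOnTheSphere}) rewrites the left-hand side as $(2\pi)^{-\kappa n}\int_{\s^{\kappa n-1}}\hat F_L(\xi)\,\widehat{(\Phi_K-\Phi_L)}(\xi)\,d\xi = -\epsilon\int_\Omega \hat F_L(\xi)\phi(\xi)\,d\xi$, which is strictly positive because $\hat F_L<0$ on $\Omega$ while $\phi\ge 0$ is nontrivial there. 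This delivers $\mu(K)>\mu(L)$ and completes the construction of the counterexample.

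The main obstacle I anticipate is justifying that $\hat F_L$ is a genuine continuous function (so that the bad set $\Omega$ is open) and that Parseval's formula applies under the \emph{finite}-regularity hypotheses of the proposition rather than $C^\infty$ smoothness. This is precisely what the assumptions $g\in C^{\max\{2,\kappa-2\}}$ and the $C^{\kappa n-\kappa}$ regularity of the expression in (\ref{eq_bpamnpd}) are designed to secure; a routine smoothing argument, approximating $F_L$ by infinitely smooth $\kappa$-invariant homogeneous functions and passing to the limit in both the convexity check for $K$ and in Parseval, should reduce the entire argument to the smooth setting already used in Lemmas \ref{lemma_ParsevalOnTheSphere} and \ref{lemma_rft}.
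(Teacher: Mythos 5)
Your proposal is correct and follows essentially the same route as the paper's proof. You define $K$ by the same perturbation of the section integrand $\Phi_L$ against a $\kappa$-invariant bump whose (suitably normalized) Fourier transform sits on the set where $\hat F_L$ is negative; you obtain the section inequality directly from (\ref{eq_msresection}); and you combine Lemma \ref{lemma_elem} with spherical Parseval to deduce $\mu(K)>\mu(L)$. The only structural difference is cosmetic — the paper first applies Parseval to derive $\int F_L(\Phi_K-\Phi_L)\,dx>0$ and then uses Lemma \ref{lemma_elem} to compare it to $\mu(K)-\mu(L)$, whereas you apply the two estimates in the opposite order — and the one loose end you flag (applying Parseval under finite regularity) is handled in the paper not by a smoothing argument but by citing the finite-smoothness form of Parseval (Corollary 3.23 in \cite{Koldobsky2005} with $k=\kappa n-\kappa$), which is exactly what the $C^{\kappa n-\kappa}$ hypothesis on (\ref{eq_bpamnpd}) and $g\in C^{\max\{2,\kappa-2\}}$ are there to enable.
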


\begin{proof}
Since the function (\ref{eq_bpamnpd}) is in $C^{\kappa n -\kappa -1}(\R^{\kappa n} \setminus \{0\})$, it follows by Corollary 3.17 (i) in \cite{Koldobsky2005}, that its Fourier transform is a continuous function on the sphere. Hence, by continuity, its Fourier transform must be negative on some open subset $\Omega$ of the sphere. From the $\kappa$-invariance of the function (\ref{eq_bpamnpd}), it follows that the set $\Omega$ is $\kappa$-balanced. Let $h$ be an infinitely-smooth non-negative and not identically zero $\kappa$-invariant function on the sphere with support contained in the set $\Omega$. Extend $h$ to a homogeneous function of degree $-\kappa$, then the Fourier transform of this extension is a homogeneous function of degree $-\kappa n +\kappa$, i.e. there is an infinitely smooth function $v$ on the sphere so that $(h \cdot r^{-\kappa})^{\wedge}=v \cdot r^{-\kappa n+\kappa}$.

\noindent
Let $\epsilon >0$, define another body $K$ by
\begin{align*}
	&|x|_2^{-\kappa n+\kappa} \int_0^{\frac{|x|_2}{\|x\|_K}} r^{-\kappa n+\kappa-1} g\left(\frac{rx}{|x|_2}\right) dr \\
	&=|x|_2^{-\kappa n+\kappa}\int_0^{\frac{|x|_2}{\|x\|_L}} r^{-\kappa n+\kappa-1} g\left(\frac{rx}{|x|_2}\right) dr 
-\epsilon |x|_2^{-\kappa n+\kappa} v\left(\frac{x}{|x|_2}\right) \, .
\end{align*}
As $g \in C^2(\R^{\kappa n} \setminus \{0\})$, by Lemma 5.16 in \cite{Koldobsky2005}, $K$ is convex for $\epsilon$ small enough. Since the function $h$ is positive, using equation (\ref{eq_msresection}), it follows  
\begin{align*}
	&\gamma(K\cap H_{\xi}) \\
	&=\frac{|S^{\kappa-1}|}{(2 \pi)^{\kappa}} \left(|x|_2^{- \kappa n + \kappa} \int_0^{\frac{|x|_2}{\|x\|_K}} g\left(\frac{rx}{|x|_2}\right) r^{\kappa n-\kappa -1} dr \right)^{\wedge}(\xi) \\
	&=\frac{|S^{\kappa-1}|}{(2 \pi)^{\kappa}} \left(|x|_2^{- \kappa n + \kappa} \int_0^{\frac{|x|_2}{\|x\|_L}} g\left(\frac{rx}{|x|_2}\right) r^{\kappa n-\kappa -1} dr \right)^{\wedge}(\xi)
-\epsilon \frac{|S^{\kappa-1}|(2 \pi)^{\kappa n}}{(2 \pi)^{\kappa}} h(\xi)\\
	&\leq \gamma(L\cap H_{\xi})  \, .
\end{align*}

On the other hand, the function $h$ is supported on the set where the Fourier transform of the function (\ref{eq_bpamnpd}) is negative, hence
\begin{align*}
	& \left(\|x\|_L^{-\kappa} \frac{f(x\|x\|_L^{-1})}{g(x\|x\|_L^{-1})}\right)^{\wedge}(\xi) \left(|x|_2^{- \kappa n + \kappa} \int_0^{\frac{|x|_2}{\|x\|_K}} g\left(\frac{rx}{|x|_2}\right) r^{\kappa n-\kappa -1} dr \right)^{\wedge}(\xi) \\
	&= \left(\|x\|_L^{-\kappa} \frac{f(x\|x\|_L^{-1})}{g(x\|x\|_L^{-1})}\right)^{\wedge}(\xi) \left(|x|_2^{- \kappa n + \kappa} \int_0^{\frac{|x|_2}{\|x\|_L}} g\left(\frac{rx}{|x|_2}\right) r^{\kappa n-\kappa -1} dr \right)^{\wedge}(\xi) \\
& \,\,\,\,\,\,\,\,\,\,\,\,\,\,\,\,\,\,\,\,\,\,\,\,\,\,\,\,\,\,\,\, -\epsilon (2 \pi)^{\kappa n} \left(\|x\|_K^{-\kappa} \frac{f(x\|x\|_K^{-1})}{g(x\|x\|_K^{-1})}\right)^{\wedge}(\xi) h(\xi) \\
	&\geq \left(\|x\|_L^{-\kappa} \frac{f(x\|x\|_L^{-1})}{g(x\|x\|_L^{-1})}\right)^{\wedge}(\xi) \left(|x|_2^{- \kappa n + \kappa} \int_0^{\frac{|x|_2}{\|x\|_L}} g\left(\frac{rx}{|x|_2}\right) r^{\kappa n-\kappa -1} dr \right)^{\wedge}(\xi) \, .
\end{align*}
Note that the above inequality is strict on $\Omega$. 

Since $g\in C^{\kappa-2}(\R^{\kappa n} \setminus \{0\})$, by Corollary 3.17 (i) in \cite{Koldobsky2005}, functions $\xi\mapsto \gamma(K\cap H_{\xi})$ and $\xi\mapsto \gamma(L\cap H_{\xi})$ are continuous positive functions on the sphere. Integrating the latter inequality over the sphere and applying the spherical Parseval's formula in the form of Corollary 3.23 in \cite{Koldobsky2005} with $k=\kappa n-\kappa$, which is justified by above observations and the fact that the function (\ref{eq_bpamnpd}) is in $C^{\kappa n -\kappa}(\R^{\kappa n} \setminus \{0\})$, we obtain
\begin{align*}
	& \int_{\s^{\kappa n-1}} \|x\|_L^{-\kappa} \frac{f(x\|x\|_L^{-1})}{g(x\|x\|_L^{-1})} \int_0^{\|x\|_K^{-1}} g(rx) r^{\kappa n-\kappa -1} dr dx \\
	&> \int_{\s^{\kappa n-1}} \|x\|_L^{-\kappa} \frac{f(x\|x\|_L^{-1})}{g(x\|x\|_L^{-1})} \int_0^{\|x\|_L^{-1}} g(rx) r^{\kappa n-\kappa -1} dr dx \, .
\end{align*}
This is equivalent to 
\begin{equation}\label{eq_nepeq1}
 0< \int_{\s^{\kappa n-1}} \|x\|_L^{-\kappa} \frac{f(x\|x\|_L^{-1})}{g(x\|x\|_L^{-1})} \int_{\|x\|_L^{-1}}^{\|x\|_K^{-1}} g(rx) r^{\kappa n-\kappa -1} dr dx \, .
\end{equation}

By Lemma \ref{lemma_elem}, with $a=\|x\|_L^{-1}, b=\|x\|_K^{-1}, \alpha(r)=f(rx), \beta(r)=g(rx)$, we also have
\begin{equation}\label{eq_nepeq2}
	\|x\|_L^{-\kappa} \frac{f(x\|x\|_L^{-1})}{g(x\|x\|_L^{-1})} \int_{\|x\|_L^{-1}}^{\|x\|_K^{-1}} g(rx) r^{\kappa n-\kappa -1} dr  
	\leq \int_{\|x\|_L^{-1}}^{\|x\|_K^{-1}} f(rx) r^{\kappa n-1} dr \, .
\end{equation}

Integrating equation (\ref{eq_nepeq2}) over the sphere and combining the resulting equation with inequality (\ref{eq_nepeq1}), yields
$$ \int_{\s^{\kappa n-1}} \int_0^{\|x\|_L^{-1}} f(rx) r^{\kappa n-1} dr dx < \int_{\s^{\kappa n-1}} \int_0^{\|x\|_K^{-1}} f(rx) r^{\kappa n-1} dr dx \, ,$$
which is equivalent to
$$ \mu(L) < \mu(K) \, . $$
\end{proof}

\begin{theorem}\label{th_bppokn}
Let $f=g$ be equal even non-negative $\kappa$-invariant continuous functions on $\R^{\kappa n}\setminus \{0\}$ that are locally-integrable on every line through the origin. Then the answer to the Busemann-Petty problem in $\K^n$ for arbitrary measures is positive in the following cases: (i) $n=2, \kappa\in \N$, (ii) $n=3, \kappa\leq 2$ and (iii) $n=4, \kappa =1$. In the remaining cases the answer to the Busemann-Petty problem in $\K^n$ for arbitrary measures is negative for an even strictly positive  $\kappa$-invariant continuous function $f\in C^l(\R^{\kappa n} \setminus \{0\})$ with $l=\max\{2, \kappa-2\}$. 
\end{theorem}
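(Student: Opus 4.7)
The plan is to apply Proposition \ref{pr_positive} for the three affirmative cases and Proposition \ref{pr_negative} for the remaining ones, using Corollary \ref{co_ib} to decide when a suitable intersection body / non-intersection body exists.

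For the positive direction, fix $f=g$ and suppose $\gamma(K \cap H_\xi) \leq \gamma(L \cap H_\xi)$ for every $\xi \in \s^{\kappa n - 1}$. The monotonicity hypothesis of Proposition \ref{pr_positive} is trivially satisfied since $t^{\kappa} f(tx)/g(tx) = t^{\kappa}$, and the positive-definiteness hypothesis becomes positive-definiteness of $\|x\|_K^{-\kappa}$, which by Theorem \ref{th_kibpdd} is equivalent to $K$ being an intersection body in $\K^n$. In the three cases $(n=2, \kappa \in \N)$, $(n=3, \kappa \leq 2)$, $(n=4, \kappa = 1)$, Corollary \ref{co_ib} asserts that every origin-symmetric convex body in $\K^n$ is an intersection body in $\K^n$, so the hypothesis is automatic. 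Applying Proposition \ref{pr_positive} with any $\epsilon > 0$ (the section inequality trivially holds up to $\epsilon$) and letting $\epsilon \to 0$ yields $\mu(K) \leq \mu(L)$.

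For the negative direction, the strategy is to exhibit, in each remaining case $(n,\kappa)$, an infinitely-smooth origin-symmetric convex body $L$ in $\K^n$ with strictly positive curvature whose norm $\|\cdot\|_L^{-\kappa}$ (which, when $f=g$, is exactly the expression (\ref{eq_bpamnpd})) fails to be a positive-definite distribution, and then to invoke Proposition \ref{pr_negative}. By Corollary \ref{co_ib}, in any of the remaining cases one can find an origin-symmetric convex body in $\K^n$ that is not an intersection body in $\K^n$; concretely, the bodies $B_{q,\kappa}$ with $q > 2$ produced in Proposition \ref{pr_nkib} (together with Corollary \ref{co_ibkib}) give such examples. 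The other hypotheses of Proposition \ref{pr_negative} are automatic: the constant function $f = g$ is as smooth as one wishes, and for an infinitely-smooth $L$ the function $\|\cdot\|_L^{-\kappa}$ lies in $C^{\infty}(\R^{\kappa n}\setminus\{0\}) \subset C^{\kappa n-\kappa}$. Proposition \ref{pr_negative} then produces an origin-symmetric convex body $K$ with $\gamma(K \cap H_\xi) \leq \gamma(L \cap H_\xi)$ for every $\xi$, but $\mu(K) > \mu(L)$.

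The main obstacle is the passage from the non-smooth counterexamples $B_{q,\kappa}$ produced by Proposition \ref{pr_nkib} to an infinitely-smooth, strictly convex body $L$ that still fails to be an intersection body in $\K^n$. This is settled by the standard perturbation argument (Lemma 4.10 in \cite{Koldobsky2005}, used identically in the proof of Proposition \ref{pr_nbp}): the property of not representing a positive-definite distribution is stable under sufficiently small $C^k$-perturbations of the norm, so one approximates $B_{q,\kappa}$ in the radial metric by $\kappa$-balanced infinitely-smooth convex bodies with strictly positive curvature, and at least one such approximation retains the failure of positive-definiteness of $\|\cdot\|_L^{-\kappa}$, since positive-definiteness is equivalent to a closed condition (Theorem \ref{th_kibpdd} combined with Corollary \ref{co_ibkib}) on the radial function that $B_{q,\kappa}$ violates strictly.
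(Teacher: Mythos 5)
Your proposal is correct and follows essentially the same route as the paper: both directions reduce to Propositions \ref{pr_positive} and \ref{pr_negative} via the observation that with $f=g$ the monotonicity hypothesis holds trivially and the positive-definiteness hypothesis reduces to $\|\cdot\|_K^{-\kappa}$ being positive definite, i.e.\ $K$ being an intersection body in $\K^n$; the affirmative cases then come from Corollary \ref{co_ib}, and the negative cases from exhibiting a non-intersection body (the bodies $B_{q,\kappa}$, $q>2$) and smoothing via Lemma 4.10 of \cite{Koldobsky2005}. The only minor divergence is that you apply Proposition \ref{pr_positive} with $\epsilon>0$ and then pass to the limit, whereas the paper simply takes $\epsilon=0$ directly — both are valid; and your justification that non-positive-definiteness survives $C^k$-small perturbations is phrased informally (``closed condition violated strictly''), but it is the same content that the paper outsources to Lemma 4.10 together with the convolution argument needed to keep the approximants $\kappa$-balanced.
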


\begin{proof}
Since $t^{\kappa} \frac{f(tx)}{g(tx)} = t^{\kappa}$ is a non-decreasing function, Propositions \ref{pr_positive} and \ref{pr_negative} apply. Suppose $K$ is an intersection body in $\K^n$, then $\|x\|_K^{-\kappa} \frac{f(x\|x\|_K^{-1})}{g(x\|x\|_K^{-1})} = \|x\|_K^{-\kappa}$ is a positive-definite distribution on $\R^{\kappa n}$. The affirmative part now follows from Corollary \ref{co_ib} and Proposition \ref{pr_positive} with $\epsilon = 0$.

For the negative part, note that in this case there is an origin-symmetric convex body $L$ in $\K^n$ that is not an intersection body in $\K^n$, e.g. $B^{\kappa n}_q$ with $q>2$, see Section \ref{section_cibinkn}. $L$ can be approximated in the radial metric by a sequence of infinitely-smooth origin-symmetric convex bodies $L_m$ in $\K^n$ with strictly positive curvature so that each body $L_m$ is not an intersection body in $\K^n$. This follows from Lemma 4.10 in \cite{Koldobsky2005} and the connection between the convolution and linear transformations. Thus we can assume that $\|x\|_L^{-\kappa} \frac{f(x\|x\|_L^{-1})}{g(x\|x\|_L^{-1})} = \|x\|_L^{-\kappa}$ is in $C^{\infty}(\R^{\kappa n} \setminus \{0\})$ and does not represent a positive definite distribution. The negative part now follows from Proposition \ref{pr_negative}.
\end{proof}

The volume difference inequality is obtained by interchanging the roles of $K$ and $L$ in Proposition \ref{pr_positive}.
\begin{co}\label{co_vdiam}
Under the assumptions of Proposition \ref{pr_positive}, we have 
\begin{align*} 
|\mu(K)-& \mu(L)| \leq \frac{1}{|\s^{\kappa n -\kappa-1}|} \max\limits_{\xi\in\s^{\kappa n-1}} |\gamma(K\cap H_\xi)-\gamma(L\cap H_\xi)| \times \\
& \times \max\left\{ \int_{\s^{\kappa n-1}} \|x\|_{K}^{-\kappa} \frac{f(x\|x\|_{K}^{-1})}{g(x\|x\|_{K}^{-1})} dx, \int_{\s^{\kappa n-1}} \|x\|_{L}^{-\kappa} \frac{f(x\|x\|_{L}^{-1})}{g(x\|x\|_{L}^{-1})} dx \right\} \, .
\end{align*}
\end{co}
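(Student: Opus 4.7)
The plan is to apply Proposition~\ref{pr_positive} twice, once as stated and once with the roles of $K$ and $L$ interchanged, and then combine the two resulting estimates. The hypothesis ``under the assumptions of Proposition~\ref{pr_positive}'' is read symmetrically: both $K$ and $L$ are origin-symmetric star bodies in $\K^n$ such that the distributions
$$\|x\|_K^{-\kappa}\,\frac{f(x\|x\|_K^{-1})}{g(x\|x\|_K^{-1})} \quad\text{and}\quad \|x\|_L^{-\kappa}\,\frac{f(x\|x\|_L^{-1})}{g(x\|x\|_L^{-1})}$$
are positive definite on $\R^{\kappa n}$, and the functions $f, g$ satisfy the monotonicity and integrability assumptions of that proposition.

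First I would set
$$ \epsilon := \max_{\xi\in\s^{\kappa n -1}} |\gamma(K\cap H_\xi) - \gamma(L\cap H_\xi)|, $$
so that both $\gamma(K\cap H_\xi) \leq \gamma(L\cap H_\xi) + \epsilon$ and $\gamma(L\cap H_\xi) \leq \gamma(K\cap H_\xi) + \epsilon$ hold for every $\xi\in\s^{\kappa n-1}$. Applying Proposition~\ref{pr_positive} directly to the pair $(K,L)$ gives
$$ \mu(K) - \mu(L) \leq \frac{\epsilon}{|\s^{\kappa n -\kappa-1}|} \int_{\s^{\kappa n-1}} \|x\|_K^{-\kappa}\,\frac{f(x\|x\|_K^{-1})}{g(x\|x\|_K^{-1})}\, dx, $$
and applying it to the pair $(L,K)$, which is legitimate because the positive-definiteness hypothesis holds for $L$ as well, gives the analogous bound for $\mu(L)-\mu(K)$ with $K$ replaced by $L$ inside the integral.

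Each of the two right-hand sides is bounded above by $\epsilon/|\s^{\kappa n -\kappa-1}|$ times the maximum of the two integrals, so both $\mu(K)-\mu(L)$ and $\mu(L)-\mu(K)$ satisfy this common upper bound, yielding the desired inequality for $|\mu(K)-\mu(L)|$. There is no real obstacle here: the corollary is a straightforward symmetrization of Proposition~\ref{pr_positive}, and the only point requiring a little care is to record the positive-definiteness assumption for both bodies so that the proposition can be invoked in both directions.
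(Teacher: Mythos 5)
Your proof is correct and matches the paper's approach: the paper dispatches this corollary with the single remark that it ``is obtained by interchanging the roles of $K$ and $L$ in Proposition \ref{pr_positive},'' which is exactly the two-sided application and symmetrization you spell out. Your observation that the hypothesis must be read as requiring the positive-definiteness condition for both bodies is correct and indeed implicit in the paper's one-line argument as well as in the presence of both integrals inside the $\max$ in the conclusion.
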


\begin{theorem}\label{th_hifam}
Let $f=g$ be equal even non-negative $\kappa$-invariant continuous functions on $\R^{\kappa n}\setminus \{0\}$ that are locally-integrable on every line through the origin. Let $K$ be an intersection body in $\K^n$, then
\begin{equation*} 
\mu(K) \leq \frac{n}{n-1} \frac{|B^{\kappa n}_2|^{\frac{n-1}{n}}}{|B_2^{\kappa n - \kappa}|} \max\limits_{\xi\in\s^{\kappa n-1}} \gamma(K\cap H_\xi) |K|^{\frac{1}{n}} \, .
\end{equation*}
\end{theorem}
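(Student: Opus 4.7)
The plan is to apply Proposition \ref{pr_positive} with $L$ chosen to shrink to a point, mirroring the passage from the stability statement Proposition \ref{pr_sfbpikn} to the hyperplane inequality in Corollary \ref{co_hibp}. Since $f=g$, the monotonicity hypothesis reduces to the trivial statement that $t^\kappa$ is non-decreasing, and the auxiliary distribution
$$\|x\|_K^{-\kappa}\frac{f(x\|x\|_K^{-1})}{g(x\|x\|_K^{-1})}=\|x\|_K^{-\kappa}$$
is positive definite precisely because $K$ is an intersection body in $\K^n$, by Theorem \ref{th_kibpdd}. Thus Proposition \ref{pr_positive} applies.

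Set $\epsilon=\max_{\xi\in\s^{\kappa n-1}}\gamma(K\cap H_\xi)$ and take $L=\delta B_2^{\kappa n}$ for some $\delta>0$. The inequality $\gamma(K\cap H_\xi)\le\gamma(L\cap H_\xi)+\epsilon$ holds trivially for every $\xi$, so Proposition \ref{pr_positive} yields
$$\mu(K)\le\mu(\delta B_2^{\kappa n})+\frac{\epsilon}{|\s^{\kappa n-\kappa-1}|}\int_{\s^{\kappa n-1}}\|x\|_K^{-\kappa}\,dx.$$
Letting $\delta\to 0^+$, the first term tends to zero by absolute continuity of $\mu$ with respect to Lebesgue measure, leaving
$$\mu(K)\le\frac{\max_{\xi}\gamma(K\cap H_\xi)}{|\s^{\kappa n-\kappa-1}|}\int_{\s^{\kappa n-1}}\|x\|_K^{-\kappa}\,dx.$$

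The remaining step is the Hölder-plus-polar-coordinates computation already appearing in the proof of Corollary \ref{co_hibp}. By Hölder's inequality with exponents $n$ and $n/(n-1)$,
$$\int_{\s^{\kappa n-1}}\|x\|_K^{-\kappa}\,dx\le\left(\int_{\s^{\kappa n-1}}\|x\|_K^{-\kappa n}\,dx\right)^{1/n}|\s^{\kappa n-1}|^{(n-1)/n},$$
and the polar volume formula $\int_{\s^{\kappa n-1}}\|x\|_K^{-\kappa n}\,dx=\kappa n|K|$ together with $|\s^{\kappa n-1}|=\kappa n|B_2^{\kappa n}|$ upgrade this to $\kappa n\,|K|^{1/n}|B_2^{\kappa n}|^{(n-1)/n}$. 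Dividing by $|\s^{\kappa n-\kappa-1}|=\kappa(n-1)|B_2^{\kappa n-\kappa}|$ produces exactly the constant $\frac{n}{n-1}|B_2^{\kappa n}|^{(n-1)/n}/|B_2^{\kappa n-\kappa}|$ claimed in the theorem.

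There is no genuine obstacle here: Proposition \ref{pr_positive} does all the heavy lifting, and the deduction of the hyperplane inequality from the stability estimate is entirely parallel to the volume case of Corollary \ref{co_hibp}. The only point that warrants a line of verification is that $\mu(\delta B_2^{\kappa n})\to 0$, which is immediate from the fact that $\mu$ has locally integrable density.
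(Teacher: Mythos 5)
Your proof is correct and takes essentially the same route as the paper: both shrink $L=\delta B_2^{\kappa n}$ to a point in the stability estimate and then apply H\"older with exponents $n$ and $n/(n-1)$ together with the identities $\int_{\s^{\kappa n-1}}\|x\|_K^{-\kappa n}dx=\kappa n|K|$, $|\s^{\kappa n-1}|=\kappa n|B_2^{\kappa n}|$, $|\s^{\kappa n-\kappa-1}|=\kappa(n-1)|B_2^{\kappa n-\kappa}|$. The only cosmetic difference is that you invoke Proposition \ref{pr_positive} directly with $\epsilon=\max_\xi\gamma(K\cap H_\xi)$, whereas the paper routes through Corollary \ref{co_vdiam} (Proposition \ref{pr_positive} applied in both directions); since $\mu(\delta B_2^{\kappa n})\to 0$, only one direction contributes in the limit, so the two derivations coincide.
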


\begin{proof}
Let $f=g$ in the inequality of Corollary \ref{co_vdiam}. Further, set $L=\delta B^{\kappa n}_2$, let $\delta$ go to zero, and observe that by H{\"o}lder's inequality
\begin{align*}
 \frac{1}{|\s^{\kappa n -\kappa-1}|} \int_{\s^{\kappa n-1}} \|x\|_{K}^{-\kappa} dx  
 &\leq \frac{1}{|\s^{\kappa n -\kappa-1}|} \left( \int_{\s^{\kappa n-1}} \|x\|_{K}^{-\kappa n} dx \right)^{\frac{1}{n}} |S^{\kappa n -1}|^{\frac{n-1}{n}} \\
 &=\frac{n}{n-1} \frac{|B^{\kappa n}_2|^{\frac{n-1}{n}}}{|B_2^{\kappa n - \kappa}|} |K|^{\frac{1}{n}}.
\end{align*}
The constant is the best possible, this follows by a similar example as in \cite{MR2891246}, Theorem 1.
\end{proof}

For $\kappa=1,2$ the inequality of Theorem \ref{th_hifam} reduces to the previously known hyperplane inequalities for arbitrary measures, see \cite{MR2891246} and \cite{MR3106735}.

\begin{lemma}\label{le_hiamdfg}
Let $M$ be an intersection body in $\K^n$ and let $K$ be any star body in $\K^n$, then
\begin{equation*} 
	\int_K \|x\|_M^{-l-\kappa} dx \leq \frac{n}{n-1} \frac{|B^{\kappa n}_2|^{\frac{n-1}{n}}}{|B_2^{\kappa n - \kappa}|} |M|^{\frac{1}{n}} \max\limits_{\xi\in\s^{\kappa n-1}} \int_{K\cap H_{\xi}} \|x\|_M^{-l} dx  \, ,
\end{equation*}
for $l<\kappa n -\kappa$.
\end{lemma}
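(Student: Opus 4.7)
The plan is to mimic the arguments used in the proof of Proposition \ref{pr_sfbpikn} and Theorem \ref{th_hifam}, but with the test function $f(x) = \|x\|_M^{-l} \|x\|_K^{-\kappa n + l + \kappa}$ on the sphere instead of $\|\cdot\|_L^{-\kappa n + \kappa}$. Note that since $M$ and $K$ are $\kappa$-balanced star bodies and $l < \kappa n - \kappa$, this $f$ is an element of $C_\kappa(\s^{\kappa n-1})$, so Definition \ref{df_ib} applies to it.

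First, I would rewrite both sides in polar coordinates. A direct computation gives
$$ \int_K \|x\|_M^{-l-\kappa}\, dx \;=\; \frac{1}{\kappa n - l - \kappa}\int_{\s^{\kappa n-1}} \|x\|_M^{-\kappa}\, f(x)\, dx, $$
and similarly
$$ \int_{K\cap H_\xi} \|x\|_M^{-l}\, dx \;=\; \frac{1}{\kappa n - l - \kappa}\, \RT^\kappa f(\xi). $$
Let $\nu$ be the $\kappa$-invariant finite Borel measure on $\s^{\kappa n -1}$ associated to $M$ via Definition \ref{df_ib}. Applying the definition to $f$ and combining the two identities above yields
$$ \int_K \|x\|_M^{-l-\kappa}\, dx \;=\; \int_{\s^{\kappa n-1}} \left( \int_{K\cap H_\xi} \|x\|_M^{-l}\, dx \right) d\nu(\xi), $$
which immediately gives
$$ \int_K \|x\|_M^{-l-\kappa}\, dx \;\leq\; \max_{\xi\in\s^{\kappa n-1}} \int_{K\cap H_\xi} \|x\|_M^{-l}\, dx \cdot \nu(\s^{\kappa n-1}). $$

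The remaining step is to estimate the total mass $\nu(\s^{\kappa n-1})$, exactly as in the proofs of Proposition \ref{pr_sfbpikn} and Theorem \ref{th_hifam}. Since $\RT^\kappa 1 \equiv |\s^{\kappa n - \kappa -1}|$, Definition \ref{df_ib} applied to the constant function $1$ combined with H\"older's inequality and the polar formula $\int_{\s^{\kappa n-1}}\|x\|_M^{-\kappa n}\,dx = \kappa n |M|$ yields
$$ \nu(\s^{\kappa n-1}) \;=\; \frac{1}{|\s^{\kappa n-\kappa-1}|}\int_{\s^{\kappa n-1}} \|x\|_M^{-\kappa}\, dx \;\leq\; \frac{n}{n-1}\,\frac{|B_2^{\kappa n}|^{(n-1)/n}}{|B_2^{\kappa n-\kappa}|}\, |M|^{1/n}, $$
where I use $|\s^{\kappa n -1}| = \kappa n |B_2^{\kappa n}|$ and $|\s^{\kappa n-\kappa -1}| = \kappa(n-1)|B_2^{\kappa n-\kappa}|$. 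Inserting this into the previous inequality gives the claim.

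There is no real obstacle here since all the ingredients are already set up in the preceding sections; the only tiny point to verify is that the chosen $f$ lies in $C_\kappa(\s^{\kappa n -1})$ (it does, by $\kappa$-balancedness of $M$ and $K$) and that the exponents are in the admissible range (guaranteed by the hypothesis $l < \kappa n - \kappa$).
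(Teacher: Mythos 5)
Your proof is correct. It reaches the same conclusion as the paper but by a more direct route: you apply Definition~\ref{df_ib} (with the measure $\nu$ associated to $M$) directly to the test function $\|x\|_M^{-l}\|x\|_K^{-\kappa n+l+\kappa}$, obtaining the exact identity $\int_K\|x\|_M^{-l-\kappa}\,dx=\int_{\s^{\kappa n-1}}\bigl(\int_{K\cap H_\xi}\|x\|_M^{-l}\,dx\bigr)\,d\nu(\xi)$ before bounding by the maximum. The paper instead reaches the same estimate by plugging the density functions $f(x)=\|x\|_M^{-l-\kappa}$, $g(x)=\|x\|_M^{-l}$ into the general machinery for arbitrary measures (Proposition~\ref{pr_positive}, Corollary~\ref{co_vdiam}) and then taking $L=\delta B_2^{\kappa n}$, $\delta\to 0$. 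If one unpacks that machinery, the auxiliary body $D$ in Proposition~\ref{pr_positive} equals $M$, the test function inside Definition~\ref{df_ib} is (up to the constant $\kappa n-\kappa-l$) exactly your $f$, and the comparison in Lemma~\ref{lemma_elem} degenerates to an equality because $t^{\kappa}f(tx)/g(tx)=\|x\|_M^{-\kappa}$ is independent of $t$ --- which is precisely why your direct argument succeeds without that lemma. So your proposal is essentially the unwound, self-contained version of the paper's two-line proof; it trades the economy of reusing the framework for a transparent, streamlined computation. Both are valid, and your verification that $f\in C_\kappa(\s^{\kappa n-1})$ and that $l<\kappa n-\kappa$ makes the radial integrals converge is exactly the right care to take.
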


\begin{proof}
Let $f(x)=\|x\|_M^{-l-\kappa}$ and $g(x)=\|x\|_M^{-l}$, then $t^{\kappa} \frac{f(tx)}{g(tx)} = \|x\|_M^{-\kappa}$ is a non-decreasing function, $\|x\|_K^{-\kappa} \frac{f(x\|x\|_K^{-1})}{g(x\|x\|_K^{-1})} = \|x\|_M^{-\kappa}$ is a positive-definite distribution and hence Corollary \ref{co_vdiam} applies. The result follows by setting $L=\delta B^{\kappa n}_2$ and letting $\delta$ go to zero. 
\end{proof}

\noindent
Setting $l=-\kappa$ and $M=B^{\kappa n}_2$ in Lemma \ref{le_hiamdfg}, yields 
\begin{co}
For any star body $K$ in $\K^n$, we have
\begin{equation*} 
	|K| \leq \frac{n}{n-1} \frac{|B^{\kappa n}_2|}{|B_2^{\kappa n - \kappa}|} \max\limits_{\xi\in\s^{\kappa n-1}} \int_{K\cap H_{\xi}} |x|_2^{\kappa} dx  \, .
\end{equation*}
\end{co}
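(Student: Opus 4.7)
The proof is a direct specialization of Lemma \ref{le_hiamdfg}, so my plan is to verify that the prescribed choices $M = B_2^{\kappa n}$, $l = -\kappa$ are admissible, and then perform the substitution.

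First I would check that $B_2^{\kappa n}$ qualifies as an intersection body in $\K^n$. The Euclidean ball is rotation invariant, hence in particular $\kappa$-balanced. To see it is an intersection body in $\K^n$ via the Fourier analytic criterion of Theorem \ref{th_kibpdd}, I invoke the formula for the Fourier transform of powers of the Euclidean norm: $(|\cdot|_2^{-\kappa})^{\wedge}$ is a positive multiple of $|\cdot|_2^{-\kappa n + \kappa}$, which is a strictly positive locally integrable function, and therefore a positive distribution. Thus $\|\cdot\|^{-\kappa}_{B_2^{\kappa n}}$ represents a positive definite distribution on $\R^{\kappa n}$, so $B_2^{\kappa n}$ is an intersection body in $\K^n$, making it a legal choice of $M$ in Lemma \ref{le_hiamdfg}.

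Next I would verify the admissibility of $l = -\kappa$. The lemma requires $l < \kappa n - \kappa$, and indeed $-\kappa < \kappa n - \kappa$ holds for every $\kappa \geq 1$ and $n \geq 2$. Setting $M = B_2^{\kappa n}$ we have $\|x\|_M = |x|_2$ and $|M| = |B_2^{\kappa n}|$, so the substitutions read
$$ \|x\|_M^{-l-\kappa} = |x|_2^{\kappa - \kappa} = 1, \qquad \|x\|_M^{-l} = |x|_2^{\kappa}, \qquad |M|^{1/n} = |B_2^{\kappa n}|^{1/n}. $$
Plugging these into Lemma \ref{le_hiamdfg} immediately gives
$$ |K| = \int_K 1\, dx \leq \frac{n}{n-1} \frac{|B_2^{\kappa n}|^{(n-1)/n}}{|B_2^{\kappa n - \kappa}|}\, |B_2^{\kappa n}|^{1/n}\, \max_{\xi \in \s^{\kappa n -1}} \int_{K \cap H_\xi} |x|_2^{\kappa}\, dx, $$
and combining the two powers of $|B_2^{\kappa n}|$ in the numerator yields exactly the asserted inequality.

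There is no substantive obstacle here since the bulk of the analytic work is already contained in Lemma \ref{le_hiamdfg}; the only nontrivial point to justify carefully is that $B_2^{\kappa n}$ really is an intersection body in $\K^n$, which follows from the positivity of the Fourier transform of $|\cdot|_2^{-\kappa}$ on $\R^{\kappa n}\setminus\{0\}$ via Theorem \ref{th_kibpdd}.
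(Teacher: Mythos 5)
Your proof is correct and follows exactly the paper's own derivation, which obtains the corollary by substituting $l=-\kappa$ and $M=B_2^{\kappa n}$ into Lemma \ref{le_hiamdfg}. The extra checks you include — that $B_2^{\kappa n}$ is indeed an intersection body in $\K^n$ via the positivity of $(|\cdot|_2^{-\kappa})^{\wedge}$, and that $-\kappa < \kappa n - \kappa$ — are both valid and make the argument self-contained.
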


\noindent
And setting $l=0$ and $M=B^{\kappa n}_2$ in Lemma \ref{le_hiamdfg}, we obtain
\begin{co}
For any star body $K$ in $\K^n$, we have
\begin{equation*} 
	\int_K |x|_2^{-\kappa} dx \leq \frac{n}{n-1} \frac{|B^{\kappa n}_2|}{|B_2^{\kappa n - \kappa}|} \max\limits_{\xi\in\s^{\kappa n-1}} |K\cap H_{\xi}| \, .
\end{equation*}
\end{co}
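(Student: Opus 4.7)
The claim is framed as a direct specialization of Lemma \ref{le_hiamdfg} with $l=0$ and $M = B^{\kappa n}_2$, so the plan is simply to verify the hypotheses of that lemma for this choice and then unwind the resulting inequality.

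The only substantive point to check is that the Euclidean ball $B^{\kappa n}_2$ is an intersection body in $\K^n$. It is $\kappa$-balanced, since it is invariant under the full orthogonal group $\mathrm{O}(\kappa n)$ and in particular under the subgroup $\{R_\sigma \colon \sigma\in \SO(\kappa)\}$. To see that it is an intersection body in $\K^n$, I would invoke Theorem \ref{th_kibpdd}: it suffices to check that $\|\cdot\|_{B^{\kappa n}_2}^{-\kappa} = |\cdot|_2^{-\kappa}$ is a positive definite distribution on $\R^{\kappa n}$. By the classical formula for the Fourier transform of powers of the Euclidean norm recalled just before Lemma \ref{le_fte}, one has
$$ (|\cdot|_2^{-\kappa})^{\wedge}(y) \;=\; \frac{\pi^{\kappa n/2}\, 2^{\kappa n-\kappa}\,\Gamma((\kappa n-\kappa)/2)}{\Gamma(\kappa/2)}\;|y|_2^{-\kappa n+\kappa}, $$
which is strictly positive on $\R^{\kappa n}\setminus\{0\}$.

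Next, I would check the parameter constraint of Lemma \ref{le_hiamdfg}: $l < \kappa n - \kappa$ becomes $0 < \kappa(n-1)$, which holds for all $\kappa \geq 1$ and $n \geq 2$. With $l=0$ and $M = B^{\kappa n}_2$ inserted into Lemma \ref{le_hiamdfg}, the left-hand side $\int_K \|x\|_M^{-l-\kappa}\,dx$ becomes $\int_K |x|_2^{-\kappa}\,dx$; the integrand on the right, $\|x\|_M^{-l}$, becomes $1$, so $\int_{K\cap H_\xi} \|x\|_M^{-l}\,dx = |K\cap H_\xi|$; and the constant simplifies via $|B^{\kappa n}_2|^{1/n}\cdot |B^{\kappa n}_2|^{(n-1)/n} = |B^{\kappa n}_2|$ to $\tfrac{n}{n-1}\,|B^{\kappa n}_2|/|B_2^{\kappa n-\kappa}|$, yielding the claimed inequality.

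There is no real obstacle: the argument is a verification of hypotheses followed by algebraic simplification. The only step worth flagging is the positive definiteness of $|\cdot|_2^{-\kappa}$, which is immediate from the explicit Fourier transform formula above.
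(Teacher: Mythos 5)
Your argument is correct and coincides with the paper's: the paper simply states that the corollary follows by setting $l=0$ and $M=B^{\kappa n}_2$ in Lemma \ref{le_hiamdfg}, and you carry out exactly that specialization, additionally spelling out the (correct) verification that $B^{\kappa n}_2$ is an intersection body in $\K^n$, the parameter constraint $l<\kappa n-\kappa$, and the algebraic simplification of the constant.
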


%%%%%%%%%%%%%%%%%%%%%%%%%%%%%%%%%%%%%%%%%%%%%%%%%%%%%%%%%%%%%%%%%%%%%%%%%%%%%%%%%%%%%%%%%%%%%%%%%%%%%%%%%%%%%%%%%%%%%%%%%%%%%%%%%%%
%%%%%%%%%%%%%%%%%%%%%%%%  \section{Intersection bodies in $\K^n$ of convex bodies} %%%%%%%%%%%%%%%%%%%%%%%%%%%%%%%%%%%%%%%%%%%%%%%% 
%%%%%%%%%%%%%%%%%%%%%%%%%%%%%%%%%%%%%%%%%%%%%%%%%%%%%%%%%%%%%%%%%%%%%%%%%%%%%%%%%%%%%%%%%%%%%%%%%%%%%%%%%%%%%%%%%%%%%%%%%%%%%%%%%%%
\section{Intersection bodies of convex bodies in $\K^n$}\label{section_IntersectionBodiesOfConvexBodies}

In this section we extend to $\K^n$ Busemann's theorem, which says that the intersection body of an origin-symmetric convex body in $\R^n$ is convex. 

The first part of the proof goes along the lines of the proof of Busemann's theorem in $\R^n$, up to inequality (\ref{eq_psatte}). Busemann's theorem in $\R^n$ is then obtained by applying the arithmetic-geometric mean inequality. This step has to be replaced by the use of a result of K. Ball, as it was done in the complex case, see \cite{MR3106735}. We will use the following result of Ball, as stated in \cite{MR3106735}.

\begin{pr}\textnormal{(\cite{MR3106735} , Corollary 7.5)}\label{pr_ballpaouris}
Let $r_1, r_2 >0$ and let $\alpha >0$. Define $\lambda, r_3$ as follows:
$$  \lambda=\frac{r_1}{r_1+r_2}, \,\,\,\,\, r_3=\frac{\alpha}{r_1^{-1}+r_2^{-1}}  \, .$$
Assume that $f_1, f_2, f_3 : [0, \infty) \rightarrow [0, \infty)$ such that
$f_3(r_3) \geq f_1(r_1)^{(1-\lambda)} f_2(r_2)^{\lambda}$ for any $r_1, r_2 >0$.
Let $p\geq 1$ and denote 
$$ A^p=\int_0^{\infty} f_1(r)r^{p-1}dr, \,\,\,\,\, B^p=\int_0^{\infty} f_2(r)r^{p-1}dr, \,\,\,\,\, C^p=\int_0^{\infty} f_3(r)r^{p-1}dr \,.$$
Then 
$$ C \geq \frac{\alpha}{\frac{1}{A}+\frac{1}{B}} \, .$$
\end{pr}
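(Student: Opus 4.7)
The plan is to establish this Ball-type inequality by first simplifying the hypothesis and then invoking a one-dimensional Borell--Brascamp--Lieb (BBL) argument. Solving the system $r_3 = \alpha r_1 r_2/(r_1+r_2)$, $\lambda = r_1/(r_1+r_2)$ for $r_1, r_2$ in terms of $r_3$ and $\mu := 1-\lambda$ gives $r_1 = r_3/(\alpha\mu)$ and $r_2 = r_3/(\alpha(1-\mu))$, so the hypothesis reads cleanly as
$$ f_3(r) \;\geq\; f_1\!\left(\frac{r}{\alpha\mu}\right)^{\!\mu} f_2\!\left(\frac{r}{\alpha(1-\mu)}\right)^{\!1-\mu} $$
for every $r>0$ and every $\mu \in (0,1)$. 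As a sanity check, in the symmetric case $f_1 = f_2$, taking $\mu = 1/2$ yields $f_3(r) \geq f_1(2r/\alpha)$, and the substitution $s = 2r/\alpha$ gives $C^p \geq (\alpha/2)^p A^p$, i.e.\ $C \geq \alpha A/2 = \alpha AB/(A+B)$. For the indicators $f_i = \chi_{[0,a_i]}$ one verifies directly that $f_3 = \chi_{[0,\alpha a_1 a_2/(a_1+a_2)]}$ with equality throughout, so any proof must be sharp and must allow the optimal $\mu$ to depend on $r$.

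For the general case, the natural substitution is $u = 1/r$ with $g_i(u) := f_i(1/u)$: this linearizes the relation to $u_3 = (u_1+u_2)/\alpha$ and converts the hypothesis into
$$ g_3\!\left(\frac{u_1+u_2}{\alpha}\right) \;\geq\; g_1(u_1)^{u_1/(u_1+u_2)}\, g_2(u_2)^{u_2/(u_1+u_2)}. $$
Writing $\Phi_i(u) := u\log g_i(u)$ and $\Psi(v) := \alpha v \log g_3(v)$, the condition becomes $\Psi((u_1+u_2)/\alpha) \geq \Phi_1(u_1)+\Phi_2(u_2)$ for all $u_1,u_2>0$; equivalently, $\Psi$ dominates the sup-convolution of $\Phi_1$ and $\Phi_2$. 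The integrals also transform cleanly, e.g.\ $A^p = \int_0^\infty g_1(u) u^{-p-1}\, du$, and the target bound $C \geq \alpha AB/(A+B)$ is then the integral-form BBL inequality associated with $p$-concavity exponent $-1$ (the harmonic-mean regime). An equivalent and more geometric route, closer to K.~Ball's original method, is to associate to each $f_i$ a convex body $K_i \subset \R^p$ whose axial width encodes $A$, $B$, $C$ respectively, and deduce the inequality from Brunn--Minkowski applied to a suitable Minkowski combination of $K_1$ and $K_2$.

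The principal obstacle is that one cannot substitute a single value of $\mu$ into the pointwise hypothesis and conclude via H{\"o}lder's inequality: the direction of H{\"o}lder is wrong. Indeed, the choice $\mu = B/(A+B)$ followed by H{\"o}lder produces
$$ \int_0^\infty f_1\!\left(\tfrac{r}{\alpha\mu}\right)^{\!\mu} f_2\!\left(\tfrac{r}{\alpha(1-\mu)}\right)^{\!1-\mu} r^{p-1}\, dr \;\leq\; \left(\tfrac{\alpha AB}{A+B}\right)^{\!p}, $$
an \emph{upper} bound on the natural integral, with equality only in the indicator case. Thus the proof must use the hypothesis at \emph{all} $\mu$ simultaneously --- precisely the BBL condition in disguise --- and the hard part is tracking the Jacobian of $(r_1,r_2) \mapsto (r_3,\mu)$, which equals $r_3/(\alpha^2\mu^2(1-\mu)^2)$, to confirm that the sharp constant $\alpha AB/(A+B)$ emerges without spurious factors and without any additional regularity hypothesis on the $f_i$.
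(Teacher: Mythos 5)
This proposal is an outline, not a proof, and the outline contains a misleading claim at the crucial step. The paper itself gives no argument for this proposition (it simply cites \cite{MR3106735}, Corollary 7.5), so what matters is whether your reasoning actually closes. Your reformulation of the hypothesis as $f_3(r)\geq f_1(r/(\alpha\mu))^{\mu} f_2(r/(\alpha(1-\mu)))^{1-\mu}$ for all $\mu\in(0,1)$ is correct, your two sanity checks are correct, and your observation that a single choice of $\mu$ plus H{\"o}lder gives an inequality in the wrong direction is exactly the right warning. But the step you then lean on --- that the target bound ``is the integral-form BBL inequality associated with $p$-concavity exponent $-1$ (the harmonic-mean regime)'' --- is not a theorem you can invoke: in dimension one, Borell--Brascamp--Lieb with a \emph{fixed} $\lambda$ and exponent $s=-1$ yields $\int h\geq\min(\int f,\int g)$, not the harmonic mean $\alpha AB/(A+B)$. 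The harmonic-mean conclusion genuinely requires the family of hypotheses over \emph{all} $\mu$, which is stronger than any single BBL hypothesis, and you never say how to exploit that extra strength. Your last sentence concedes as much: the ``hard part is tracking the Jacobian \dots\ to confirm that the sharp constant \dots\ emerges,'' which is precisely the part of the proof that is missing.

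To make this rigorous one does not change variables by a Jacobian in $(r_1,r_2)$; the standard device (Ball's) is a one-dimensional transport map. Define $R_i(t)$ by $\int_0^{R_1(t)}f_1 r^{p-1}\,dr=tA^p$ and $\int_0^{R_2(t)}f_2 r^{p-1}\,dr=tB^p$, so $f_i(R_i)R_i^{p-1}R_i'=A^p$ resp.\ $B^p$. Put $r_3(t)=\alpha R_1R_2/(R_1+R_2)$ and $\rho=R_1/(R_1+R_2)$, so $r_3'=\alpha(R_1'R_2^2+R_1^2R_2')/(R_1+R_2)^2$. Then $C^p\geq\int_0^1 f_3(r_3)r_3^{p-1}r_3'\,dt\geq\int_0^1 f_1(R_1)^{1-\rho}f_2(R_2)^{\rho}r_3^{p-1}r_3'\,dt$. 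Substituting the identities for $f_i(R_i)$ and applying weighted AM--GM to $R_1'R_2^2+R_1^2R_2'=R_1R_2(R_1+R_2)[(1-\rho)(R_1'/R_1)+\rho(R_2'/R_2)]\geq R_1R_2(R_1+R_2)(R_1'/R_1)^{1-\rho}(R_2'/R_2)^{\rho}$ reduces the integrand to $\bigl(\alpha A^{1-\rho}B^{\rho}R_1^{\rho}R_2^{1-\rho}/(R_1+R_2)\bigr)^p$. Since $R_1^{\rho}R_2^{1-\rho}=\rho^{\rho}(1-\rho)^{1-\rho}(R_1+R_2)$, the needed pointwise bound $(A+B)\rho^{\rho}(1-\rho)^{1-\rho}\geq A^{\rho}B^{1-\rho}$ is exactly the nonnegativity of relative entropy between the two-point distributions $(\rho,1-\rho)$ and $(A/(A+B),B/(A+B))$; integrating over $t\in[0,1]$ gives $C\geq\alpha AB/(A+B)$. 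That is the argument your outline gestures at but never supplies, and it is the part that cannot be replaced by citing BBL.
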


\begin{theorem}\textnormal{(Busemann's theorem in $\K^n$)}\label{th_bt}
Let $S$ be a $\kappa(n-2)$-di\-men\-sional $\kappa$-balanced subspace of $\R^{\kappa n}$ and $u\in \s^{\kappa n-1} \cap S^{\perp}$. Denote by $S_u= span \left\{ S, H_{u}^{\perp} \right\}$. Define a function $r: \s^{\kappa n-1} \cap S^{\perp} \rightarrow (0, \infty)$ by 
$$ r(u)=|K\cap S_u|^{1/\kappa} \, .$$
Then the curve $r$ is the boundary of a $\kappa$-balanced convex body in $S^{\perp}$.
\end{theorem}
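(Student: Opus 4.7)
The plan is to reduce convexity of the body $C \subset S^{\perp}$ bounded by the curve $r$ to a single inequality relating $r(u_1), r(u_2), r(u_3)$ for three directions in $S^{\perp}$, then establish that inequality via Brunn--Minkowski on $K$ combined with Ball's $p$-th root result (Proposition \ref{pr_ballpaouris}). Since $S$ is $\kappa$-balanced, so is $S^\perp$, and one verifies that $C$ is $\kappa$-balanced because $r$ is $\kappa$-invariant. To prove convexity, it suffices to show that the Minkowski gauge $F(w) = |w|/r(w/|w|)$ on $S^\perp$ is subadditive; equivalently, for any $u_1, u_2 \in \s^{\kappa n-1} \cap S^\perp$ and $\alpha, \beta > 0$, setting $v = \alpha u_1 + \beta u_2$ and $u_3 = v/|v|$, one must establish
\begin{equation*}
r(u_3) \;\geq\; \frac{|v|\,r(u_1)\,r(u_2)}{\alpha\, r(u_2) + \beta\, r(u_1)}.
\end{equation*}

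The first key step is to write $r(u)^\kappa$ as a one-dimensional integral. Since $H_u^\perp \subset S^\perp$ is $\kappa$-dimensional and $S_u = S \oplus H_u^\perp$ is an orthogonal direct sum, Fubini gives $|K\cap S_u| = \int_{H_u^\perp} f_u(h)\,dh$ with $f_u(h) = \vol_{\kappa(n-2)}(K \cap (S+h))$. Because $K$ and $S$ are $\kappa$-balanced, $f_u(h)$ depends only on $|h|$, so switching to polar coordinates in $H_u^\perp$ gives
\begin{equation*}
r(u)^\kappa \;=\; |S^{\kappa-1}|\int_0^\infty t^{\kappa-1}\,f_u(t)\,dt, \qquad f_u(t) := \vol_{\kappa(n-2)}(K \cap (S + tu)).
\end{equation*}

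The second step applies Brunn--Minkowski to the convex body $K$: for $h_1 \in H_{u_1}^\perp$, $h_2 \in H_{u_2}^\perp$ and $\lambda\in[0,1]$, the inclusion $(1-\lambda)(K\cap(S+h_1)) + \lambda(K\cap(S+h_2)) \subseteq K \cap (S + (1-\lambda)h_1 + \lambda h_2)$ yields concavity of the $1/\kappa(n-2)$-power of the parallel section volume. Take $h_1 = tu_1$, $h_2 = su_2$, and choose $\lambda$ so that $(1-\lambda)h_1 + \lambda h_2$ is a positive multiple of $u_3$; a short computation gives $\lambda = \beta t/(\alpha s + \beta t)$ and $(1-\lambda)h_1 + \lambda h_2 = \tau u_3$ with $\tau = ts|v|/(\alpha s + \beta t)$. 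Brunn--Minkowski followed by AM--GM then delivers
\begin{equation*}
f_{u_3}(\tau) \;\geq\; f_{u_1}(t)^{1-\lambda}\, f_{u_2}(s)^{\lambda}.
\end{equation*}

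The third step is to massage this into the hypothesis of Proposition \ref{pr_ballpaouris}. Substitute $t = r_1/\beta$ and $s = r_2/\alpha$; then $\lambda = r_1/(r_1+r_2)$ and $\tau = (|v|/\alpha\beta)/(r_1^{-1} + r_2^{-1})$, so Ball's hypothesis holds with the constant $\alpha_B = |v|/(\alpha\beta)$. Applying the proposition with $p = \kappa$, the scaling of the $L^p$-type norms gives $A = \beta r(u_1)/|S^{\kappa-1}|^{1/\kappa}$, $B = \alpha r(u_2)/|S^{\kappa-1}|^{1/\kappa}$, and $C = r(u_3)/|S^{\kappa-1}|^{1/\kappa}$. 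The conclusion $C \geq \alpha_B/(A^{-1}+B^{-1})$ simplifies precisely to the required inequality above, proving subadditivity of $F$ and hence convexity of the body bounded by $r$.

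The main obstacle is the matching of parameters: the natural $\lambda$ coming from Brunn--Minkowski depends on $(t,s,\alpha,\beta)$ in a way that does not directly fit Ball's framework, and one must rescale the two integration variables asymmetrically (by $1/\beta$ and $1/\alpha$ respectively) to bring the setup into the form required by Proposition \ref{pr_ballpaouris}. Once this rescaling is identified, the proof is a direct combination of Brunn--Minkowski (which uses only convexity of $K$, not its symmetries) with the polar decomposition in $H_u^\perp$ (which uses the $\kappa$-balanced structure) and Ball's inequality.
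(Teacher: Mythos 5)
Your proposal is correct and follows essentially the same route as the paper: express $r(u)^\kappa$ as a $\kappa$-dimensional polar integral of the parallel section function $f_u$ (using that $S$ and $K$ are $\kappa$-balanced to see $f_u(h)$ depends only on $|h|$), apply Brunn--Minkowski and AM--GM to get the log-concavity hypothesis $f_{u_3}(\tau) \geq f_{u_1}(t)^{1-\lambda} f_{u_2}(s)^\lambda$, and invoke Proposition \ref{pr_ballpaouris} with $p=\kappa$. I have checked your parameter bookkeeping ($\lambda = \beta t/(\alpha s+\beta t)$, $\tau = ts|v|/(\alpha s+\beta t)$, the substitution $t=r_1/\beta$, $s=r_2/\alpha$, and the resulting $A=\beta r(u_1)/|S^{\kappa-1}|^{1/\kappa}$, $B=\alpha r(u_2)/|S^{\kappa-1}|^{1/\kappa}$, $\alpha_B=|v|/(\alpha\beta)$) and it is correct. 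The one point where you diverge from the paper is the reduction: the paper only establishes the inequality for $\alpha=\beta=1$, i.e. for unit vectors $u_1,u_2$ with $u_3$ their bisector, and relies on the classical fact that this special case of the triangle inequality for $r^{-1}$ already forces convexity of the bounded region. You instead prove full subadditivity of the Minkowski gauge for all $\alpha,\beta>0$ via the asymmetric rescaling $t\mapsto r_1/\beta$, $s\mapsto r_2/\alpha$. This makes the reduction self-contained at the cost of a slightly heavier parameter match; the paper's version is shorter but leans on the standard Busemann-style reduction. Both are valid, and the substance of the argument is identical.
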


\begin{proof}
The curve $r$ is the boundary of a convex body in $S^{\perp}$ if and only if $r^{-1}$ satisfies the triangle inequality. Thus it is enough to show that for two linearly independent unit vectors $u_1, u_2$ in $S^{\perp}$ and $u_3=\frac{u_1+u_2}{|u_1+u_2|}$, we have
\begin{equation}\label{eq_tifr}
\frac{|u_1+u_2|}{r(u_3)} \leq \frac{1}{r(u_1)}+\frac{1}{r(u_2)} \, .
\end{equation}
We may assume that $H_{u_1}^{\perp} \cap H_{u_2}^{\perp} = \{0\}$, otherwise $H_{u_1}^{\perp} = H_{u_2}^{\perp}$ and (\ref{eq_tifr}) is trivially satisfied since $K\cap H_{u_1}^{\perp} \subset S^{\perp}$ is a ball. 

Let $r_j>0$, $j=1,2$, and let $r_3 u_3=(1-\lambda) r_1 u_1 + \lambda r_2 u_2$ be the intersection point of the line in the direction $u_3$ with the line segment with endpoints $r_1 u_1, r_2 u_2$, then 
$$ \lambda=\frac{r_1}{r_1+r_2}, \,\,\,\,\, \frac{r_3}{|u_1+u_2|}=\frac{1}{r_1^{-1}+r_2^{-1}} \, .$$ 
For $t>0$, let $f_{u_j}(t)= |K\cap (S+t u_j)|$, $1\leq j \leq 3$. Observe that $f_{u_j}(t) = f_{R_{\sigma}(u_j)}(t)$ for any $\sigma \in SO(\kappa)$. Indeed, since $K$ is $\kappa$-balanced
$$ f_{u_j}(t)	= \int\limits_{S+t u_j} \chi(\|x\|_K) dx = \int\limits_{S+t R_{\sigma}(u_j)} \chi(\| x\|_K) dx = f_{R_{\sigma}(u_j)}(t) \, .$$
This, in turn, implies that 
$$ r(u_j) = \left( |S^{\kappa-1}| \int_0^{\infty} f_{u_j}(t) t^{\kappa-1} dt \right)^{1/\kappa}, \,\,\, 1\leq j \leq 3 \, ,$$
since 
\begin{align*}
r^{\kappa}(u_j) 	&= \int_{H_{u_j}^{\perp}} |K\cap (S+x)| dx\\
				&= \int_0^{\infty} \int_{\s^{\kappa n-1}\cap H_{u_j}^{\perp}} |K\cap (S+t\theta)| d\theta \, t^{\kappa-1} dt \\
				&= \int_0^{\infty} \int_{\s^{\kappa n-1}\cap H_{u_j}^{\perp}} f_{\theta}(t) d\theta \, t^{\kappa-1} dt \\
				&= |S^{\kappa-1}| \int_0^{\infty} f_{u_j}(t) d\theta\, t^{\kappa-1} dt \, .
\end{align*}
Note that $r$ is $\kappa$-invariant.

By construction the sets $K\cap (S+r_j u_j)$, $1\leq j\leq 3$ lie in an affine subspace of $\R^{\kappa n}$. Hence, by convexity of $K$, for $\lambda$ as defined above 
$$ (1-\lambda) (K\cap (S+r_1 u_1)) + \lambda (K\cap (S+r_2 u_2)) \subset K\cap (S+r_3 u_3) \, .$$ 
Applying the Brunn-Minkowski inequality, we obtain
$$ f_{u_3}(r_3)^{1/\kappa(n-2)} \geq (1-\lambda) f_{u_1}(r_1)^{1/\kappa(n-2)} + \lambda f_{u_2}(r_2)^{1/\kappa(n-2)} \, ,$$
and the arithmetic-geometric mean inequality yields
\begin{equation}\label{eq_psatte}
f_{u_3}(r_3) \geq f_{u_1}(r_1)^{(1-\lambda)} f_{u_2}(r_2)^{\lambda} \, .
\end{equation}
Now we apply Proposition \ref{pr_ballpaouris} with $p=\kappa$ and $\alpha=|u_1+u_2|$, this gives what we need
$$ \frac{|u_1+u_2|}{r(u_3)} \leq \frac{1}{r(u_1)}+\frac{1}{r(u_2)} \, . $$
\end{proof}

\begin{co}\label{co_ibocb}
Let $K$ be an origin-symmetric convex body in $\K^n$, then $I_{\K}(K)$ is also an origin-symmetric convex body in $\K^n$.
\end{co}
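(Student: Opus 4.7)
The plan is to deduce convexity of $I_{\K}(K)$ from Theorem \ref{th_bt} by restricting to $2\kappa$-dimensional $\kappa$-balanced slices of $\R^{\kappa n}$ and then transporting the convex body produced by Busemann's theorem back to $I_{\K}(K)$ via an explicit isometry of each slice.

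First I would note that $I_{\K}(K)$ is automatically $\kappa$-balanced, so convexity is equivalent to the triangle inequality $\|v_1+v_2\|_{I_{\K}(K)} \leq \|v_1\|_{I_{\K}(K)}+\|v_2\|_{I_{\K}(K)}$ for every pair of linearly independent vectors $v_1, v_2 \in \R^{\kappa n}$. When $H_{v_1}^{\perp}=H_{v_2}^{\perp}$, the vectors $v_1, v_2$, and $v_3:=(v_1+v_2)/|v_1+v_2|$ all lie in a common $\kappa$-dimensional $\kappa$-balanced subspace on which $I_{\K}(K)$ restricts to a Euclidean $\kappa$-ball, making the inequality immediate. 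Otherwise the two summands in $V:=H_{v_1}^{\perp}+H_{v_2}^{\perp}$ must intersect trivially (any nonzero common vector $w$ would give $H_w^{\perp}\subseteq H_{v_i}^{\perp}$ for $i=1,2$, forcing equality), so $V$ is a $2\kappa$-dimensional $\kappa$-balanced subspace, and I would set $S:=V^{\perp}$, a $\kappa(n-2)$-dimensional $\kappa$-balanced subspace.

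Applying Theorem \ref{th_bt} with this $S$ yields a $\kappa$-balanced convex body $K_S\subset V$ with radial function $r(u)=|K\cap S_u|^{1/\kappa}$. Using $S\perp V$ one checks that $S_u^{\perp}$ equals $W(u):=V\ominus H_u^{\perp}$, a $\kappa$-dimensional $\kappa$-balanced subspace of $V$; hence $S_u$ is the hyperplane $H_v$ of $\K^n$ for any unit vector $v\in W(u)$, and (\ref{eq_IntersectionBodyPolar}) gives $r(u)=c\,\rho_{I_{\K}(K)}(v)$ with $c:=(|\s^{\kappa-1}|/\kappa)^{1/\kappa}$ independent of $u$.

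The crucial step is to produce an $\R$-linear isometry $T:V\to V$ that sends every $\kappa$-dimensional $\kappa$-balanced subspace of $V$ to its orthogonal complement in $V$. With such $T$ in hand, setting $u_i:=Tv_i$ forces $W(u_i)=H_{v_i}^{\perp}\ni v_i$, so $r(u_i)=c\,\rho_{I_{\K}(K)}(v_i)$; and because $T$ is $\R$-linear and orthogonal, $|u_1+u_2|=|v_1+v_2|$ and $u_3=Tv_3$, whence $r(u_3)=c\,\rho_{I_{\K}(K)}(v_3)$ as well. The Busemann triangle inequality $|u_1+u_2|/r(u_3)\leq 1/r(u_1)+1/r(u_2)$ coming from convexity of $K_S$ then collapses, after cancelling $c$, to the desired triangle inequality for $\|\cdot\|_{I_{\K}(K)}$ at $v_1, v_2$. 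The main obstacle is producing $T$: when $\kappa\neq 2$, Schur's lemma identifies the Grassmannian of $\kappa$-dimensional $\kappa$-balanced subspaces of $V$ with $\R P^1$ and the $\SO(\kappa)$-equivariant orthogonal map $T(x, y):=(-y, x)$ on $V\cong\R^{\kappa}\oplus\R^{\kappa}$ implements the quarter-turn that exchanges each such subspace with its orthogonal complement; when $\kappa=2$ the Grassmannian is $\C P^1$ and the involution $L\mapsto L^{\perp}$ is antiholomorphic, so no $\SO(2)$-equivariant orthogonal $T$ works and one must instead take the $\C$-antilinear (hence $\R$-linear orthogonal) map $T(a, b):=(\bar b, -\bar a)$ on $V=\C^2$, which satisfies $TR_\theta=R_{-\theta}T$ and still sends each complex line to its Hermitian complement.
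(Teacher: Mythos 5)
Your argument is correct and follows essentially the same route as the paper: both reduce to $2\kappa$-dimensional $\kappa$-balanced slices $V=S^{\perp}$, invoke Busemann's theorem in $\K^n$ (Theorem \ref{th_bt}) there, and identify the resulting convex radial profile $r$ with $\rho_{I_{\K}(K)}$ on that slice via the pairing $H_u^{\perp}\leftrightarrow H_v^{\perp}$. The only place you go beyond the paper's write-up is in producing the explicit orthogonal $\SO(\kappa)$-(anti)equivariant map $T\colon V\to V$ (with the separate $\kappa=2$, $\C$-antilinear construction) realizing that pairing; the paper's closing line ``This shows that $I_{\K}(K)\cap S^{\perp}$ is convex'' tacitly relies on such an isometry, so this is a useful explication rather than a different method.
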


\begin{proof}
In case $n=2$, $H_{\xi}$ is $\kappa$-dimensional and hence $K\cap H_{\xi}$ is a ball. This implies that $I_{\K}(K)$ is a rotation of $K$. Indeed, let $\xi\in\s^{\kappa n-1}$, then
$$ \frac{|S^{\kappa-1}|}{\kappa} \|\xi\|_{I_{\K}(K)}^{-\kappa} = |K\cap H_{\xi}| = \frac{|S^{\kappa-1}|}{\kappa} \|x\|_K^{-\kappa} \, , $$
for any $x\in K\cap H_{\xi}$.

Now assume $n\geq 3$. A subset $L$ of $\R^{\kappa n}$ is convex if and only if all its two-dimensional sections through any fixed point are convex. In particular, if for any linearly independent vectors $x,y$, the section $L\cap \mathrm{span}$ $\!\!\{x,y\}$ is convex. The condition that $L\cap \mathrm{span}$ $\!\!\{H_x^{\perp}, H_y^{\perp}\} $ is convex, is stronger and hence implies that L is convex.

Let $S$ be a $\kappa(n-2)$-dimensional $\kappa$-balanced subspace of $\R^{\kappa n}$ and $u, v \in \s^{\kappa n-1} \cap S^{\perp}$ so that $v \perp H_u^{\perp}$. Observe that by definition of intersection bodies in $\K^n$,
$$ |K\cap H_v| = |I_{\K}(K) \cap H_v^{\perp}| = \frac{|S^{\kappa-1}|}{\kappa} \|v\|_{I_{\K}(K)}^{-\kappa} = \frac{|S^{\kappa-1}|}{\kappa} \rho_{I_{\K}(K)}^{\kappa}(v)\, .$$
Hence, in the notation of Busemann's theorem
$$ \rho^{\kappa}_{I_{\K}(K)}(v) = \frac{\kappa}{|S^{\kappa-1}|} |K\cap H_v| = \frac{\kappa}{|S^{\kappa-1}|} |K\cap S_u| = \frac{\kappa}{|S^{\kappa-1}|} r(u) \, .$$
This shows that $I_{\K}(K) \cap S^{\perp}$ is convex, and hence $I_{\K}(K)$ is convex.
\end{proof}

\begin{re}
Together Corollaries \ref{co_ibkib} and \ref{co_ibocb} show that $\kappa$-intersection bodies of $\kappa$-balanced convex bodies in $\R^{\kappa n}$ exist and are convex. This is not true in general. In \cite{yaskin2013} V. Yaskin constructed an origin-symmetric convex body in $\R^n$ such that its $k$-intersection body exists and is not convex, for $2 \leq k\leq n-1$.
\end{re}

The result of D. Hensley \cite{hensley80} and C. Borell \cite{borell73}, that the intersection body of a convex body is isomorphic to an ellipsoid, extends to $\K^n$ via a result from \cite{MR2826412}. Recall that the Banach-Mazur distance of two origin-symmetric convex bodies $K,L$ in $\R^n$ is defined as
$$ d_{BM}(K,L) = \inf\{a>0 \: K\subset T L \subset a K\ \text{ with } T \in GL_n\}\,.$$

\begin{pr}\textnormal{(\cite{MR2826412}, Theorem 1.2)}\label{pr_ipoib}
Let $K$ be an origin-symmetric convex body in $\R^n$ and assume that the $k$-intersection body of $K$, $I_k(K)$, exists and is convex, then 
$$ d_{BM}(I_k(K),B^n_2) \leq c(k) \, ,$$
where $c(k)$ only depends on $k$.
\end{pr}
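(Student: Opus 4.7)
The plan is to prove that $\|\xi\|_{I_k(K)}^{-k}$ is pointwise equivalent, with constants depending only on $k$, to the radial function of the classical intersection body $I(K)=I_1(K)$, and then to invoke the Hensley-Borell theorem, which provides a uniform bound $d_{BM}(I(K),B_2^n)\le C$ for every origin-symmetric convex body $K$.

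The first step is to unwind the Fourier definition: $\|\cdot\|_{I_k(K)}^{-k}$ is positive definite and, up to a normalization constant, agrees on the unit sphere with the Fourier transform of $\|\cdot\|_K^{-n+k}$. Applying the one-dimensional version of Proposition \ref{pr_psf} (codimension parameter equal to one) yields a representation of the form
\[
\|\xi\|_{I_k(K)}^{-k}\;=\;c(n,k)\,\bigl\langle |u|^{-k},\,A_\xi(u)\bigr\rangle,
\]
where $A_\xi(u):=|K\cap(\xi^\perp+u\xi)|$ is the one-dimensional parallel section function in the direction $\xi$, and the pairing with the non-locally-integrable distribution $|u|^{-k}$ is interpreted via the regularization recalled before that proposition.

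Next, the Brunn-Minkowski inequality makes $A_\xi$ an even log-concave function on $\R$, and I would apply Ball's integral inequality for log-concave functions on the half-line. This should produce, for each fixed $k$, absolute constants $\alpha_k,\beta_k>0$ so that
\[
\alpha_k\,A_\xi(0)^k\;\le\;\|\xi\|_{I_k(K)}^{-k}\;\le\;\beta_k\,A_\xi(0)^k\qquad\text{for every }\xi\in S^{n-1}.
\]
The crucial output is the dimension-independence of $\alpha_k,\beta_k$: Ball's inequality compares moments, and more generally regularized pairings of the form $\langle|u|^{-k},\cdot\rangle$, for even log-concave functions on $[0,\infty)$ with constants depending only on the exponents involved, not on the function itself nor on the ambient dimension.

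Since $A_\xi(0)=|K\cap\xi^\perp|=\|\xi\|_{I(K)}^{-1}$, the display rewrites as $\alpha_k^{1/k}\|\xi\|_{I(K)}\le\|\xi\|_{I_k(K)}\le\beta_k^{1/k}\|\xi\|_{I(K)}$, so $I_k(K)$ and $I(K)$ are isomorphic with Banach-Mazur distance bounded in terms of $k$ only. Busemann's theorem ensures $I(K)$ is convex, Hensley-Borell then delivers $d_{BM}(I(K),B_2^n)\le C$ with a universal constant, and composition produces the claimed bound $d_{BM}(I_k(K),B_2^n)\le c(k)$. The main obstacle is the second step when $k\ge 2$: the pairing $\langle|u|^{-k},A_\xi\rangle$ requires regularization, and Ball's inequality must be propagated through the analytic continuation in the exponent. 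This is where the hypothesis that $I_k(K)$ is itself convex (and not merely a positive-definite distribution) enters, providing the additional regularity at the origin needed to make the regularized pairing compatible with the log-concavity of $A_\xi$.
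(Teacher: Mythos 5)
The paper does not prove Proposition \ref{pr_ipoib}; it quotes it verbatim as Theorem 1.2 of \cite{MR2826412}, so there is no internal proof to compare against, and I assess your argument on its own merits.

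Your pivotal step is the pointwise two-sided bound
$$
\alpha_k\,A_\xi(0)^k \;\le\; \|\xi\|_{I_k(K)}^{-k} \;\le\; \beta_k\,A_\xi(0)^k ,
$$
with $\alpha_k,\beta_k$ depending only on $k$. This cannot hold for $k\ge 2$, because the two sides have different degrees of homogeneity in $K$. Use your own identity $\|\xi\|_{I_k(K)}^{-k}=c(n,k)\,\langle |u|^{-k},A_\xi(u)\rangle$ and replace $K$ by its dilate by $\lambda$ in the $\xi$-direction, so that $A_\xi(u)$ becomes $A_\xi(u/\lambda)$. Then $A_\xi(0)$, hence $A_\xi(0)^k$, is unchanged, while $\langle |u|^{-k},A_\xi(\cdot/\lambda)\rangle=\lambda^{1-k}\langle|u|^{-k},A_\xi\rangle$, since $|u|^{-k}$ is homogeneous of degree $-k$ on $\R$. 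For $k\ge 2$ the factor $\lambda^{1-k}$ tends to $0$ (resp.\ $\infty$) as $\lambda\to\infty$ (resp.\ $\lambda\to 0$), so each inequality fails already for suitable ellipsoids. Globally the same mismatch shows up: $\rho_{I_k(K)}^k(\xi)$ is $(n-k)$-homogeneous in $K$ (it equals an $(n-k)$-dimensional section volume), whereas $|K\cap\xi^\perp|^k$ is $k(n-1)$-homogeneous, and $n-k\ne k(n-1)$ unless $k=1$. So no choice of constants — dimension-free or not — makes your display valid for all convex $K$, and the subsequent reduction to $I(K)$ collapses.

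The underlying conceptual issue is that $\langle |u|^{-k},A_\xi\rangle$ is, up to normalization, a fractional derivative of order $k-1$ of $A_\xi$ at the origin; for $k>1$ this records the \emph{decay} of $A_\xi$ near $0$, not its value there. Ball-type moment inequalities for log-concave functions compare such regularized negative moments with dimensionally matched combinations of $A_\xi(0)$ and positive moments of $A_\xi$, never with a pure power of $A_\xi(0)$. Consequently the ellipsoid to which $I_k(K)$ is naturally compared is not $I(K)$, and the actual argument of \cite{MR2826412}, Theorem 1.2 does not pass through the classical intersection body. Finally, the convexity of $I_k(K)$ is a substantive hypothesis (John's theorem already requires it to get any finite Banach--Mazur bound), not the minor regularization device you invoke in your last sentence; treating it as such is a second gap, independent of the scaling problem above.
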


Combining the above proposition with Corollaries \ref{co_ibocb} and \ref{co_ibkib} yields
\begin{co}\label{co_hbt}
Let $K$ be an origin-symmetric convex body in $\K^n$, then 
$$ d_{BM}(I_{\K}(K),B^{\kappa n}_2) \leq c(\kappa) \, ,$$
where $c(\kappa)$ only depends on $\kappa$.
\end{co}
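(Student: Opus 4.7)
The plan is to assemble the proof directly from Corollaries \ref{co_ibocb} and \ref{co_ibkib} together with Proposition \ref{pr_ipoib}. First, I would invoke Busemann's theorem in $\K^n$ (Corollary \ref{co_ibocb}) to conclude that $I_{\K}(K)$ is an origin-symmetric convex body in $\K^n$, and in particular an origin-symmetric convex body in the ambient space $\R^{\kappa n}$. Second, since $I_{\K}(K)$ is, by construction, an intersection body of a star body in $\K^n$, hence an intersection body in $\K^n$ in the sense of Definition \ref{df_ib}, Corollary \ref{co_ibkib} identifies it with a $\kappa$-balanced $\kappa$-intersection body in $\R^{\kappa n}$ (Koldobsky's sense). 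Equivalently, by Theorem \ref{th_kibpdd}, $\|\cdot\|_{I_{\K}(K)}^{-\kappa}$ represents a positive-definite distribution on $\R^{\kappa n}$, which is precisely the Fourier-analytic criterion for membership in the class of $\kappa$-intersection bodies.

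With $I_{\K}(K)$ exhibited as a convex $\kappa$-intersection body in $\R^{\kappa n}$, I apply Proposition \ref{pr_ipoib} with ambient dimension $\kappa n$ and parameter $k=\kappa$: both hypotheses -- that the relevant $\kappa$-intersection body exists and that it is convex -- are now satisfied, and one reads off the estimate $d_{BM}(I_{\K}(K), B_2^{\kappa n}) \leq c(\kappa)$, with $c(\kappa)$ depending only on $\kappa$. The only real subtlety is matching the statement of Proposition \ref{pr_ipoib}, which is phrased for the $k$-intersection body of a convex body in the ambient Euclidean space, to our setting where $I_{\K}(K)$ is built via $\kappa$-balanced hyperplanes in $\K^n$; this reconciliation is exactly the content of Corollary \ref{co_ibkib}, so no additional work is required beyond invoking the three ingredients in the order above.
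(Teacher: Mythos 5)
Your argument is correct and mirrors the paper's proof exactly: the paper also assembles the result from Corollary \ref{co_ibocb} (convexity of $I_{\K}(K)$), Corollary \ref{co_ibkib} (identification of intersection bodies in $\K^n$ with $\kappa$-balanced $\kappa$-intersection bodies in $\R^{\kappa n}$), and Proposition \ref{pr_ipoib} applied in ambient dimension $\kappa n$ with $k=\kappa$. The "subtlety" you flag — reconciling $I_{\K}(K)$ with the $k$-intersection-body language of Proposition \ref{pr_ipoib} — is handled in the paper the same way, by citing Corollary \ref{co_ibkib}, so no divergence in approach.
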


\section*{Acknowledgements}
The first named author thanks the Oberwolfach Research Institute for Mathematics for its hospitality and support, where part of this work was carried out. The second named author would like to acknowledge support from the programme ``API$\Sigma$TEIA II-ΑΤΟCB-3566" of the General Secretariat for Research
and Technology of Greece.

\nocite{}
\bibliographystyle{amsplain}
\bibliography{ref_01wc}

\end{document}